\theoremstyle{definition}
\newtheorem{dfn}{Definition}[section]
\newtheorem{rem}[dfn]{Remark}
\newtheorem{prop}[dfn]{Proposition}
\newtheorem{lem}[dfn]{Lemma}
\newtheorem{thm}[dfn]{Theorem}
\newcommand{\M}{\mathcal{M}}
\renewcommand{\H}{\mathcal{H}}
\newcommand{\st}{\mathrm{st}\,}
\numberwithin{equation}{section}
\definecolor{green}{rgb}{0.0000,0.4000,0.0000}
\definecolor{purple}{rgb}{0.5000,0.0000,0.5000}
\title{The Morse complex of the wedge of two extended star graphs and a path}
\author{Shuma Komatsu}
\address{Faculty of Mathematics, Shinshu University, 3-1-1 Asahi, Matsumoto, Nagano 390-8621, Japan}
\email{24ss107e@shinshu-u.ac.jp}
\begin{document}
\maketitle

\begin{abstract}
  Let $K$ be a finite abstract simplicial complex.
  The Morse complex of $K$, first introduced by Chari and Joswig, is defined as the simplicial complex constructed from all gradient vector fields on $K$.
  In this paper, we determine the homotopy type of the Morse complex of the graph obtained by attaching the center vertices of two extended star graphs to  different endpoints of a path. 
  These results generalize recent work of Donovan and Scoville \cite{DS23}.
\end{abstract}

\section{introduction}

Forman's discrete Morse theory \cite{For98} provides a method for constructing a gradient vector field $V$ on $K$.
He showed that $K$ is homotopy equivalent to a CW complex with exactly one $p$-dimension cell for each critical simplex of index $p$ on $V$.
Subsequently, in 2005, The Morse complex of $K$, denoted $\M(K)$ was introduced by Chari and Joswig \cite{CJ05} as the simplicial complex of all gradient vector fields on $K$. 
In general, there are only a few complexes for which the homotopy types of the Morse complexes have been computed, and it remains difficult to compute the homotopy type of $\M(K)$ even for very small complexes $K$ such as the 3-simplex.
In spite of this, Kozlov \cite{Koz99} successfully determined the homotopy type of the Morse complexes of path and cycle graph.
Furthermore, Donovan, Lin and Scoville \cite{DLS22} determined the homotopy type of the Morse complex of a cycle wedged with a leaf by using strong collapses and Hasse diagram.
In \cite{Bar13}, Barmak introduced the \emph{star cluster} of a simplex in $K$ and proved that if $K$ is a flag complex, then the star cluster is collapsible.
This provided a tool for studying the topology of Morse complexes.
Another important tool for determining the homotopy type of the Morse complexes is the \emph{Cluster Lemma}.
This result was independently obtained by both Jonsson \cite{Jon08} and Hersh \cite{Her05}, and provides a very simple and convenient way to put a gradient vector field on a simplicial complex by combining together gradient vector fields on a disjoint decomposition of the complex.
For example, Donovan and Scoville \cite{DS23} showed that the Morse complex of a tree is a flag complex, and using star clusters and the Cluster Lemma, they determined the homotopy type of the Morse complex of the \emph{extended star graph} denoted $S_{0, n}$.

This paper provides a computational example that extends the result of \cite{DS23}.
In Section 3, we compute the homotopy type of the Morse complex of the extended star graph $S_{1, n}$, using strong collapses and the Hasse diagram (Theorem \ref{thm:3b}).
In Section 4, we compute the homotopy type of the Morse complex of the graph $P_t\vee S_{0, n}\vee S_{0, l}$, which is obtained by taking the wedge of a path $P_t$ of length $t$ with the center vertices of two extended star graphs at its endpoints (Theorem \ref{thm:main theorem}).
The computation is carried out using star clusters and the Cluster Lemma. 
This is the main result of this paper.
Finally, as in Section 3, we compute the homotopy type of the Morse complex of $P_t\vee S_{k, n}\vee S_{k', l}$ for $k, k'\in\{0, 1\}$ using strong collapses and Hasse diagram (Theorem \ref{thm:4b}, \ref{thm:4c}).

\section*{Acknowledgments}
  The author would like thank Professor Keiichi Sakai for his thoughtful guidance and constructive suggestions.

\section{Preliminaries}

An \emph{abstract simplicial complex} is a pair $(K, \Delta)$, where $K$ is a set and $\Delta\subset2^K$, satisfying the following property:
\begin{enumerate}
  \item if $\sigma\in\Delta, \tau\subset\sigma$, then $\tau\in\Delta$.
  \item $\varnothing\in\Delta$.
\end{enumerate}
The pair $(K, \Delta)$ is often simply denoted by $K$.
An element $\sigma\in\Delta$ is called a \emph{simplex}.
A $p$-simplex is a simplex $\sigma$ such that $|\sigma|=p+1$.

In the following, let $K$ be a finite abstract simplicial complex.
Write $\sigma^{(p)}$ if $\sigma$ has dimension $p$ and $\tau<\sigma$ if $\tau$ is a face of $\sigma$.
In order to describe the Morse complexes, we will need the following.

\begin{dfn}
  We write $V(K)\coloneqq\{\sigma^{(0)}\in K\}$ and $E(K)\coloneqq\{\sigma^{(1)}\in K\}$, and refer to them as the \emph{vertex set} and \emph{edge set} of $K$, respectively.
  A 0-simplex adjacent to $v\in V(K)$ is called a \emph{neighbor} of $v$.
  The \emph{degree} of a vertex $v\in V(K)$, denoted $\deg(v)$ is the number of neighbors of $v$.
  If $\deg(v)=1$ and the neighbor of $v$ is $w$, we say that $\{v, vw\}$ is a \emph{leaf}.
\end{dfn}

\begin{dfn}
  A \emph{matching} in $K$ is a subset $M\subset E(K)$ of the edge set such that no two edges in $M$ share a common vertex.
\end{dfn}

\begin{dfn}
  A simplex of $K$ that is not properly contained in any other simplex of $K$ is called a \emph{facet} of $K$.
\end{dfn}

\begin{dfn}
  Let $u\ge0$ be an integer and define $[v_u]\coloneqq\{v_0, v_1,\dots, v_u\}$.
  Let $P_u$ denote the simplicial complex on $[v_u]$ with facets $\{v_0, v_1\}, \{v_1, v_2\},\dots ,\{v_{u-1}, v_u\}$.
  i.e., the \emph{path} of length $u$.
\end{dfn}

\begin{dfn}[cf. \cite{DLS22,DS23}]\label{dfn:discrete vector field}
  Let $K$ be a simplicial complex. 
  A collection $V$ of pairs $(\sigma^{(p)}, \tau^{(p+1)})$ of simplices of codimension 1 is called a \emph{discrete vector field} if each simplex of $K$ appears in at most one such pair.
  Any pair in $(\sigma, \tau)\in V$ is called a \emph{regular pair}, and $\sigma, \tau$ are called \emph{regular}.
  If $(\sigma^{(p)}, \tau^{(p+1)})\in V$, we say that $p+1$ is the \emph{index} of the regular pair.
  Any simplex in $K$ which does not appear in any pair of $V$ called \emph{critical}.
\end{dfn}

\begin{dfn}[cf. \cite{DLS22,DS23}]
  Let $V$ be a discrete Morse vector field  on a simplicial complex $K$.
  A \emph{$V$-path} or \emph{gradient path} is a sequence of simplices 
  \begin{equation}
    \alpha_0^{(p)}, \beta_0^{(p+1)}, \alpha_1^{(p)}, \beta_1^{(p+1)}, \alpha_2^{(p)},\dots, \beta_{k-1}^{(p+1)}, \alpha_k^{(p)}
  \end{equation}
  of $K$ such that $(\alpha_i^{(p)}, \beta_i^{(p+1)})\in V$ and $\beta_i^{(p+1)}>\alpha_{i+1}^{(p)}\ne\alpha_i^{(p)}$ for $0\leq i \leq k-1$.
  If $k\ne0$, then the $V$-path is called \emph{non-trivial}.
  A $V$-path is said to be \emph{closed} if $\alpha_k^{(p)}=\alpha_0^{(p)}$.
  A discrete vector field $V$ which contains no non-trivial closed $V$-paths is called a \emph{gradient vector field}.
  We sometimes use $f$ to denote a gradient vector field.
  \end{dfn}

If the gradient vector field $f$ consists of only a single element, we call $f$ a \emph{primitive} gradient vector field.
We often denote a primitive gradient vector field $\{(u, uv)\}$ with $p=0$ by $(u)v$.

If $f, g$ are two gradient vector fields on $K$, write $g\leq f$ whenever the regular pairs of $g$ are also regular pairs of $f$.

\begin{dfn}[cf. \cite{DLS22,DS23}]\label{dfn:Morse cplx}
  The \emph{Morse complex} of $K$, denoted $\M(K)$, is the simplicial complex whose vertices are in one-to-one correspondence to the primitive vector fields of $K$, and an $(n+1)$-tuple of primitive gradient vector fields forms an $n$-simplex if any two of them share no common simplex of $K$.
  A gradient vector field $f$ is then associated with all primitive gradient vector fields $f\coloneqq\{f_0,\dots, f_n\}$ with $f_i\leq f$ for all $0\leq i\leq n$.
\end{dfn}

Let $\H(K)$ be the Hasse diagram of $K$, defined as a directed graph whose vertices correspond to the simplices of $K$, and which has a directed edge from $\sigma$ to $\tau$ whenever $\tau<\sigma$ and there is no simplex $\nu$ such that $\tau<\nu<\sigma$.
From definition \ref{dfn:discrete vector field}, we see that the non-critical pairs form a matching in the Hasse diagram.
If we reverse the orientation of the arrows in this matching, it can be shown that the resulting directed graph obtained is acyclic.
We will call such a matching in the Hasse diagram of the given simplicial complex \emph{acyclic matching}.

We write $\sigma\longleftrightarrow\tau$ to denote that the simplices  $\sigma$ and $\tau$ are matched.

\begin{rem}\label{rem:Morse cplx}
  By using Hasse diagrams, we can give an eqivalent definition of the Morse complex as in Definition \ref{dfn:Morse cplx}:
  The Morse complex $\M(K)$ of $K$ can be defined as a simplicial complex whose vertices are in one-to-one correspondence to the edges of $\H(K)$, and $n$-simplices correspond to  acyclic matching in $\H(K)$ consisting of $n+1$ edges.
\end{rem}

We use the notation $K-\{v^\prime\}\coloneqq\{\sigma\in K\mid v^\prime\notin\sigma\}$.

\begin{dfn}[cf. \cite{DLS22,DS23}]
  If $v$ dominates $v^\prime$, then the removal of $v^\prime$ from $K$ is called an \emph{elementary strong collapse} and is denoted by $K\searrow\searrow K-\{v^\prime\}$.
\end{dfn}

In particular, $K\simeq K-\{v'\}$ holds.

\begin{dfn}[cf. \cite{DLS22,DS23}]
  Let $K$ and $L$ be simplicial complexes.
  If there is a sequence of strong collapses from $K$ to $L$, then $K$ and $L$ are said to have the same \emph{strong homotopy type}.
  In the case when $L=*$, then $K$ is said to have the \emph{strong homotopy type of a point}.
  If there is a sequence of only strong collapses from $K$ to a point, $K$ is \emph{strongly collapsible}.
\end{dfn}

One construction that is particularly well behaved with respect to strong collapses is the join.

\begin{prop}[{\cite[Proposition 4.2]{DLS22}}]
  Let $K, L$ be connected simplicial complexes each with at least one edge.
  Then,
  \begin{equation}
    \M(K\sqcup L)=\M(K)*\M(L)
  \end{equation}
  where $\M(K)*\M(L)$ denotes the join of $\M(K)$ and $\M(L)$.
\end{prop}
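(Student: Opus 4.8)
The plan is to exploit the fact that $K$ and $L$ are disjoint, so that the Hasse diagram of $K\sqcup L$ splits into two pieces with no edges between them, and then to transport this splitting through the combinatorial description of the Morse complex in Remark \ref{rem:Morse cplx}. First I would record that every simplex of $K\sqcup L$ is either a simplex of $K$ or a simplex of $L$, and that two simplices $\tau<\sigma$ can be comparable only if they lie in the same piece; hence $\H(K\sqcup L)$ is the disjoint union of $\H(K)$ and $\H(L)$ as directed graphs. By Remark \ref{rem:Morse cplx} this already yields a bijection between the vertex set of $\M(K\sqcup L)$ and the union of the vertex sets of $\M(K)$ and $\M(L)$. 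The hypothesis that each of $K$ and $L$ has at least one edge guarantees that neither $\M(K)$ nor $\M(L)$ is empty, so the join on the right-hand side is formed from two nonvoid complexes (connectedness is not really needed for the identity itself, but is part of the cited statement).

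Next I would analyse which sets of edges of $\H(K\sqcup L)$ are acyclic matchings. Any edge set $S$ decomposes uniquely as $S=S_K\cup S_L$ with $S_K\subseteq\H(K)$ and $S_L\subseteq\H(L)$. The partial-matching condition, that each simplex appears in at most one pair, is inherited by $S_K$ and $S_L$, and conversely holds for $S$ as soon as it holds for both, since $K$ and $L$ share no simplices and hence no conflict can occur across the two pieces. For acyclicity, observe that an alternating cycle $b_1\succ a_1\prec b_2\succ\cdots\prec b_p\succ a_p\prec b_1$ consists entirely of comparability relations in the poset $K\sqcup L$, so all of its simplices lie in a single connected component of $\H(K\sqcup L)$, i.e. entirely inside $\H(K)$ or entirely inside $\H(L)$. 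Therefore $S$ admits such a cycle if and only if $S_K$ or $S_L$ does, and $S$ is an acyclic matching if and only if both $S_K$ and $S_L$ are.

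Combining the two observations, the simplices of $\M(K\sqcup L)$ are exactly the sets $\sigma\cup\tau$ where $\sigma$ runs over the simplices of $\M(K)$ (including $\varnothing$) and $\tau$ over the simplices of $\M(L)$ (including $\varnothing$), under the vertex identification established in the first step; this is precisely the defining condition for the join $\M(K)*\M(L)$, which finishes the proof. The only substantive point is the claim in the second step that an alternating cycle cannot jump between the two components, and I expect this to be the main (essentially the sole) obstacle: it follows immediately once one notes that every step of such a cycle is a covering relation in $K\sqcup L$, whose comparability graph is disconnected along $K$ and $L$. Everything else is bookkeeping about decomposing matchings and about the definition of the join.
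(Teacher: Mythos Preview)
Your argument is correct. The decomposition $\H(K\sqcup L)=\H(K)\sqcup\H(L)$ at the level of directed graphs, the observation that partial matchings and alternating cycles cannot cross between the two pieces, and the identification of the resulting simplex structure with the abstract join are all sound; the only point requiring thought, as you noted, is that every step in a forbidden cycle is a covering relation and hence stays within one component.

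As for comparison: the paper does not actually prove this proposition. It is quoted verbatim as \cite[Proposition 4.2]{DLS22} and used as a black box (for instance in the computations of Theorems \ref{thm:3b}, \ref{thm:4b} and \ref{thm:4c}, where disjoint pieces of a Hasse diagram are converted into joins). So there is no in-paper argument to compare yours against; your write-up simply supplies the missing justification, and does so by the most natural route.
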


The following lemma will be useful in the subsequent arguments when the simplicial complex has a leaf.

\begin{lem}[{\cite[Lemma 3.9]{DS23}}]\label{lem:dominated vertex}
  Let $K$ be a simplicial complex with leaf $\{a, ab\}$ and $c$ a neighbor of $b$ not equal to $a$.
  Then, $(b)c$ is dominated in $\M(K)$ by $(a)b$.
\end{lem}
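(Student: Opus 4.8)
The plan is to work directly from the definition of domination in the Morse complex. Recall that a vertex $w$ dominates a vertex $w'$ in a simplicial complex $X$ iff every maximal simplex of $X$ containing $w'$ also contains $w$, equivalently iff $\sigma\cup\{w\}\in X$ for every simplex $\sigma\in X$ with $w'\in\sigma$. Applying this with $X=\M(K)$, $w=(a)b$ and $w'=(b)c$, it suffices to show: for every gradient vector field $f$ on $K$ with $(b,bc)\in f$, the set $f'\coloneqq f\cup\{(a,ab)\}$ is again a gradient vector field. So I would fix such an $f$ and analyse $f'$.

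First I would exploit that $\{a,ab\}$ is a leaf. Since $\deg(a)=1$ with unique neighbor $b$, the only simplices of $K$ containing $a$ are $a$ and $ab$, and $ab$ is a facet. Hence in any discrete vector field on $K$ the vertex $a$ can appear only in the pair $(a,ab)$, and the edge $ab$ can appear only in $(a,ab)$ or $(b,ab)$. If $(a,ab)\in f$ there is nothing to prove. Otherwise $a$ is critical in $f$; moreover, since $(b,bc)\in f$ and each simplex lies in at most one pair, $b$ is already matched, so $(b,ab)\notin f$ and hence $ab$ is critical in $f$ as well. Therefore $f'$ is a legitimate discrete vector field, no simplex being used twice.

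The heart of the argument is acyclicity: I must show $f'$ contains no non-trivial closed $V$-path. Since $f$ has none and the only new pair $(a,ab)$ has index $1$, a new closed $V$-path would necessarily live at level $0$ and pass through $(a,ab)$; rotating the cycle, write it as $\alpha_0=a,\ \beta_0=ab,\ \alpha_1,\dots,\alpha_k=a$. Then $\alpha_1$ is a face of $ab$ with $\alpha_1\neq a$, so $\alpha_1=b$; the pair of $f'$ containing $b$ is $(b,bc)$, so $\beta_1=bc$ and $\alpha_2=c$, and from here on the path runs entirely inside $f$. To close up, the path must reach $a$ again at some $\alpha_k$ with $k\geq1$, which forces the preceding edge $\beta_{k-1}$ to contain $a$, hence $\beta_{k-1}=ab$ and $(\alpha_{k-1},\beta_{k-1})=(a,ab)$, so $\alpha_{k-1}=a=\alpha_k$ --- contradicting the condition $\alpha_k\neq\alpha_{k-1}$ in the definition of a $V$-path. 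Thus $f'$ is a gradient vector field, which is exactly what was required.

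I expect the acyclicity step to be the only delicate point, and within it the key observation is that the new pair $(a,ab)$ can be entered only through $a$ and left only toward $b$, while the edge $ab$ can never be revisited by the path without immediately violating $\alpha_{i+1}\neq\alpha_i$; everything else is forced by $a$ being a leaf, so the reduction to this single case and the bookkeeping of pairs involving $a$ and $ab$ should be routine.
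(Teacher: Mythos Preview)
Your argument is correct. The paper does not supply its own proof of this lemma---it is quoted verbatim from \cite[Lemma 3.9]{DS23}---so there is no in-paper proof to compare against; your direct verification from the definition of domination (showing that any gradient vector field containing $(b,bc)$ can be augmented by $(a,ab)$, with acyclicity forced by the fact that the only edge through the leaf vertex $a$ is $ab$) is exactly the natural approach and matches the argument in the cited source.
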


\begin{prop}[{\cite[Proposition 3.5]{DLS22}}]\label{prop:strong collapsibility}
  If $K$ has two leaves sharing a common vertex, then $\M(K)$ is strongly collapsible.
\end{prop}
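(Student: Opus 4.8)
The plan is to show that $\M(K)$ is a single elementary strong collapse away from a simplicial cone, and then invoke the elementary fact that every simplicial cone is strongly collapsible. Write the two leaves sharing the common vertex $b$ as $\{a, ab\}$ and $\{a', a'b\}$ with $a \neq a'$, so that $\deg(a) = \deg(a') = 1$ and $b$ is the unique neighbor of each. The two leaves will play complementary roles: the leaf $\{a', a'b\}$ supplies a dominated vertex of $\M(K)$ to delete, and the leaf $\{a, ab\}$ supplies the apex of the resulting cone. Concretely, for the first step I would apply Lemma \ref{lem:dominated vertex} to the leaf $\{a', a'b\}$ and the neighbor $c = a$ of $b$ (permissible since $a \neq a'$): this gives that the vertex $(b)a$ of $\M(K)$ is dominated by $(a')b$, so $\M(K) \searrow\searrow L$ where $L \coloneqq \M(K) - \{(b)a\}$, and $(a)b$ is still a vertex of $L$.

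Next I would prove that $L$ is the cone with apex $(a)b$ over the link of $(a)b$ in $L$. The crucial input is $\deg(a) = 1$: the only simplices of $K$ meeting the leaf $\{a, ab\}$ are $a$ and $ab$; no triangle contains $ab$ (such a triangle would force a second neighbor of $a$); and hence the only primitive gradient vector fields using $a$ or $ab$ are $(a)b$ and $(b)a$. Consequently, given any simplex $\sigma$ of $L$ with $(a)b \notin \sigma$, none of the primitive pieces of $\sigma$ uses $a$ or $ab$, so $\sigma \cup \{(a)b\}$ is again a discrete vector field; it is acyclic, since a non-trivial closed gradient path through the new pair $(a, ab)$ would have to leave the vertex $a$ along an edge other than $ab$, and there is none. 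Thus $\sigma \cup \{(a)b\} \in \M(K)$, and as it avoids $(b)a$ it lies in $L$, which is exactly the cone condition. (One may also reach this more quickly from the observation that $(a)b$ is joined by an edge to every vertex of $\M(K)$ except $(b)a$, together with the fact that $\M(K)$ is a flag complex in the sense of Definition \ref{dfn:Morse cplx}.) To finish, I would record that a simplicial cone $w * B$ is strongly collapsible — every vertex $v \neq w$ sits in a maximal simplex only alongside $w$, hence is dominated by $w$, and deleting it leaves $w * (B - \{v\})$, again a cone — so iterating collapses $L$ to the point $\{(a)b\}$; concatenating this with the collapse of the first step exhibits a sequence of elementary strong collapses from $\M(K)$ to a point.

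The step I expect to demand the most care is the cone verification: checking that adjoining $(a)b$ to an arbitrary simplex of $L$ yields a genuine gradient vector field rather than merely a discrete vector field. This is precisely where the hypothesis is used — a degree-$1$ vertex both forbids a triangle on its incident edge and blocks any gradient path from passing through it — and it is the presence of \emph{two} such leaves that lets the argument close, one leaf furnishing the deletable dominated vertex and the other the cone apex.
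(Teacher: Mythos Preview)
The paper does not give its own proof of this proposition; it simply quotes it from \cite[Proposition~3.5]{DLS22}. Your main argument is correct and is a natural route to the result: use Lemma~\ref{lem:dominated vertex} (applied to the leaf $\{a',a'b\}$ with $c=a$) to strongly collapse away the vertex $(b)a$, and then verify that the remaining complex $L=\M(K)-\{(b)a\}$ is a simplicial cone with apex $(a)b$, hence strongly collapsible. The acyclicity check for $\sigma\cup\{(a)b\}$ is right: any closed $V$-path through the pair $(a,ab)$ would have to re-enter the vertex $a$ via a $1$-simplex containing $a$, and $ab$ is the only such simplex since $\deg(a)=1$; but $ab$ is already used by the pair $(a,ab)$, so no such closed path exists.

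One point should be corrected. Your parenthetical shortcut --- ``$(a)b$ is joined to every vertex of $\M(K)$ except $(b)a$, together with the fact that $\M(K)$ is a flag complex'' --- is not valid in the generality of the proposition. By Lemma~\ref{lem:flag complex}, $\M(K)$ is a flag complex \emph{if and only if} $K$ is a tree, and the hypothesis ``two leaves sharing a common vertex'' does not force $K$ to be a tree (e.g.\ attach two pendant edges to a vertex of a cycle). So the flagness argument does not apply here; pairwise compatibility of primitives does not by itself guarantee acyclicity of their union. Fortunately your direct verification of the cone condition does not depend on flagness, so the proof stands once the parenthetical is removed.
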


Let $K\vee_v P_u$ denote attaching one endpoint of the path $P_u$ to a vertex $v\in K$.

\begin{lem}[{\cite[Lemma 3.18]{DS23}}]\label{lem:suspension}
  For any $t\ge1$,
  \begin{equation}
    \M(K\vee_vP_{3t})\simeq\Sigma^{2t}\M(K)
  \end{equation}
  where $\Sigma^{2t}\M(K)$ denotes the $2t$-fold suspension of $\M(K)$.
\end{lem}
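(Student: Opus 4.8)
The plan is to induct on $t$, the real content being the case $t=1$. For $t\ge2$ one has $K\vee_vP_{3t}=(K\vee_vP_{3(t-1)})\vee_{v'}P_3$, where $v'$ is the endpoint of the attached $P_{3(t-1)}$ farthest from $v$; applying the case $t=1$ to $K'\coloneqq K\vee_vP_{3(t-1)}$ and then the inductive hypothesis gives $\M(K\vee_vP_{3t})\simeq\Sigma^2\M(K')\simeq\Sigma^2\Sigma^{2(t-1)}\M(K)=\Sigma^{2t}\M(K)$. So I would fix $L\coloneqq K\vee_vP_3$, labelling the attached path $v=v_0,v_1,v_2,v_3$ with new edges $a=v_0v_1$, $b=v_1v_2$, $c=v_2v_3$, and show $\M(L)\simeq\Sigma^2\M(K)$.

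The primitive gradient vector fields of $L$ that are not primitive gradient vector fields of $K$ are exactly the six
\[
p_1=(v_0)v_1,\quad p_2=(v_1)v_0,\quad p_3=(v_1)v_2,\quad p_4=(v_2)v_1,\quad p_5=(v_2)v_3,\quad p_6=(v_3)v_2;
\]
two of these share a common simplex of $L$ precisely when they are adjacent in the path $p_1-p_2-p_3-p_4-p_5-p_6$, and $p_1$ is the only one of the six that shares a simplex of $L$ with a primitive gradient vector field of $K$ (namely with those of the form $(v_0)u$). The structural fact I would establish first is $(\ast)$: because $a,b,c$ are bridges of $L$, no non-trivial closed $V$-path of any discrete vector field $S$ on $L$ can traverse $a$, $b$ or $c$. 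This is vacuous when $K$ (hence $L$) is a tree, and in general follows by working inwards from the leaf $\{v_3,c\}$: if such a path traversed $c$ it would have to contain both $(v_2)v_3$ and $(v_3)v_2$; hence it avoids $c$ and the vertex $v_3$, so if it traversed $b$ it would have to contain both $(v_1)v_2$ and $(v_2)v_1$; hence it avoids $b$ and $v_2$, so traversing $a$ would force in both $(v_0)v_1$ and $(v_1)v_0$ --- each impossible for a discrete vector field. Consequently $S$ is a gradient vector field on $L$ iff $S\cap V(\M(K))$ is one on $K$; equivalently, a set $S$ of primitive gradient vector fields is a simplex of $\M(L)$ exactly when $S$ is a discrete vector field and $S\cap V(\M(K))\in\M(K)$.

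Granting $(\ast)$, the homotopy type is obtained by two elementary strong collapses. First, $\{v_3,c\}$ is a leaf of $L$ and $v_1$ is a neighbour of $v_2$ different from $v_3$, so Lemma \ref{lem:dominated vertex} gives that $p_4=(v_2)v_1$ is dominated in $\M(L)$ by $p_6=(v_3)v_2$, whence $\M(L)\searrow\searrow\M(L)-\{p_4\}$. Second, $p_1$ is dominated by $p_3$ in $\M(L)-\{p_4\}$: any simplex $\sigma$ of $\M(L)-\{p_4\}$ containing $p_1$ omits $p_2$ (which shares $a$ with $p_1$) and omits $p_4$, while $p_3$ shares a simplex only with $p_2$ and $p_4$, so $\sigma\cup\{p_3\}$ is again a discrete vector field; and it is acyclic by $(\ast)$ since $(\sigma\cup\{p_3\})\cap V(\M(K))=\sigma\cap V(\M(K))$. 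Hence $\M(L)-\{p_4\}\searrow\searrow\M(L)-\{p_1,p_4\}$, which is the full subcomplex of $\M(L)$ spanned by $V(\M(K))\cup\{p_2,p_3,p_5,p_6\}$. Now $p_2,p_3,p_5,p_6$ share no simplex of $L$ with any primitive gradient vector field of $K$, and the only pairs among them sharing a simplex are $\{p_2,p_3\}$ and $\{p_5,p_6\}$; invoking $(\ast)$ once more so that acyclicity adds no constraint, this subcomplex is exactly the join $\{p_2,p_3\}*\{p_5,p_6\}*\M(K)$. Since $\{p_2,p_3\}$ and $\{p_5,p_6\}$ are two-point complexes, i.e.\ copies of $S^0$, the join is $S^0*S^0*\M(K)=S^1*\M(K)=\Sigma^2\M(K)$, and strong collapses preserve homotopy type, so $\M(L)\simeq\Sigma^2\M(K)$.

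I expect the two delicate points to be exactly those highlighted above. Observation $(\ast)$ is what legitimises describing the collapsed complex as the join $\{p_2,p_3\}*\{p_5,p_6\}*\M(K)$ when $K$ has cycles: without it one cannot conclude that enlarging a discrete vector field by a $p_i$ keeps it acyclic. And the order of the two collapses is forced: $p_1$ is \emph{not} dominated by $p_3$ in $\M(L)$ itself, because a simplex may contain both $p_1$ and $p_4$ while $p_3$ shares a simplex with $p_4$, so $p_4$ has to be removed first. Everything else reduces to bookkeeping with the path-shaped conflict graph of $p_1,\dots,p_6$.
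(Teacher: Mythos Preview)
The paper does not give its own proof of Lemma~\ref{lem:suspension}; it is simply quoted from \cite[Lemma~3.18]{DS23}. So there is nothing in this paper to compare your argument against.

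That said, your proposal is correct and self-contained. The induction reduces cleanly to $t=1$. Your structural observation $(\ast)$ is sound: working inward from the leaf $\{v_3,c\}$, any closed $V$-path touching $c$, then $b$, then $a$ would force two pairs sharing a common simplex, so every closed $V$-path stays inside $K$ and acyclicity of $S$ on $L$ is equivalent to acyclicity of $S\cap V(\M(K))$ on $K$. The two strong collapses are valid: $p_4$ is dominated by $p_6$ directly by Lemma~\ref{lem:dominated vertex}, and after deleting $p_4$ the vertex $p_1$ becomes dominated by $p_3$ since $p_3$ conflicts only with $p_2$ and $p_4$; you rightly emphasise that the order is forced. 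The identification of the remaining full subcomplex with $\{p_2,p_3\}*\{p_5,p_6\}*\M(K)$ is exactly where $(\ast)$ is needed when $K$ contains cycles, and you use it correctly. The bookkeeping on the conflict graph $p_1\text{--}p_2\text{--}\cdots\text{--}p_6$ is accurate.

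Your method (strong collapses followed by recognising a join with two copies of $S^0$) is very much in the spirit of the techniques the paper uses elsewhere, for instance in the proofs of Theorems~\ref{thm:3b} and~\ref{thm:4b}, where vertices are removed via Lemma~\ref{lem:dominated vertex} and the resulting Hasse diagram splits into pieces whose Morse complexes join together. So even though the paper does not supply a proof here, your argument would fit naturally into it.
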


\section{extended star graphs}

\begin{dfn}[cf. \cite{Bar13, DS23}]
  A simplicial complex $K$ is a \emph{flag complex} if for each non-empty set of vertices $\sigma$ such that $\{v_i, v_j\}\in K$ for every $v_i, v_j\in\sigma$, we have that $\sigma\in K$.
\end{dfn}

\begin{dfn}[cf. \cite{Bar11}]
  If $K$ is a simplicial complex and $v$ is a vertex of $K$, the \emph{star} of $v$ in $K$ is the subcomplex $\st_K(v)\subseteq K$ of simplices $\sigma\in K$ such that $\sigma\cup\{v\}\in K$.
\end{dfn}

\begin{figure}[htbp]
  \centering
  \includegraphics[scale=0.13]{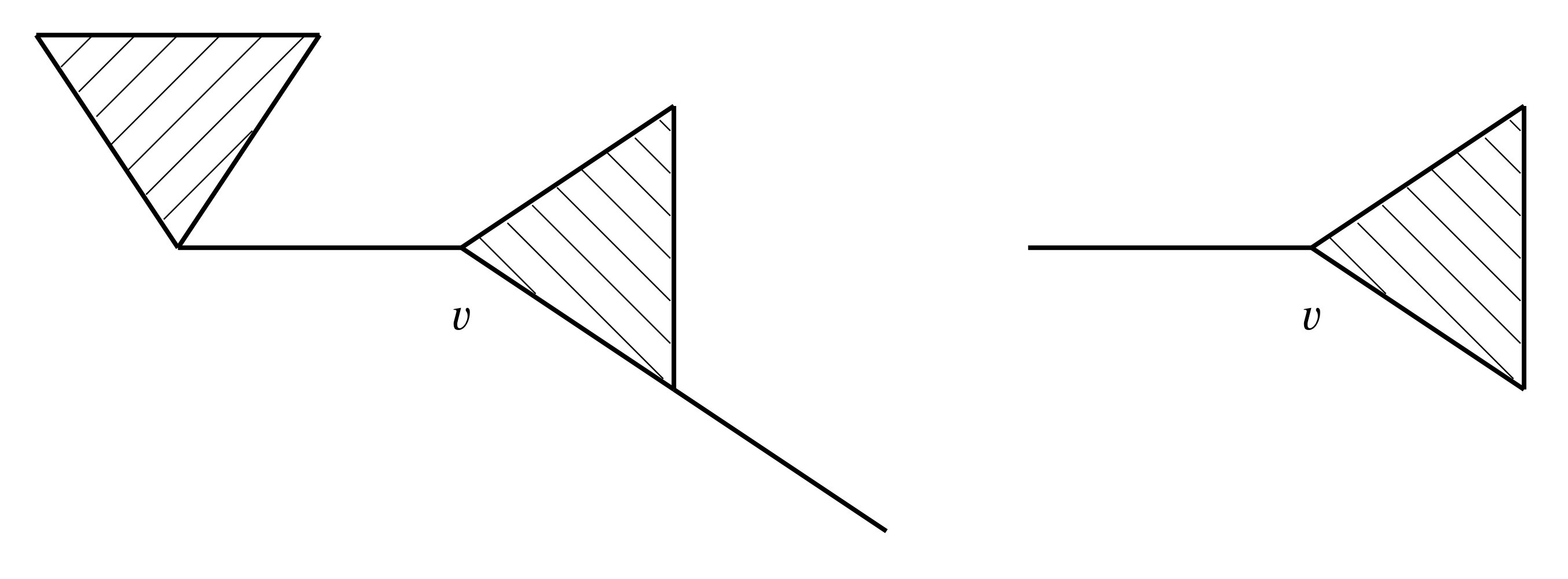}
  \caption{A simplicial complex $K$ and a vertex $v\in K$ at the left and the star $\st_K(v)$ of $v$ at the right.}
  \label{fig:star}
\end{figure}

\begin{dfn}[cf. \cite{Bar13, DS23}]
  Let $\sigma$ be a simplex of a simplicial complex $K$.
  We define the \emph{star cluster} of $\sigma$ in $K$ as the subcomplex
  \begin{equation}
    \mathrm{SC}_K(\sigma)=\bigcup_{v\in\sigma}\st_K(v).
  \end{equation}
\end{dfn}

\begin{figure}[htbp]
  \centering
  \includegraphics[scale=0.13]{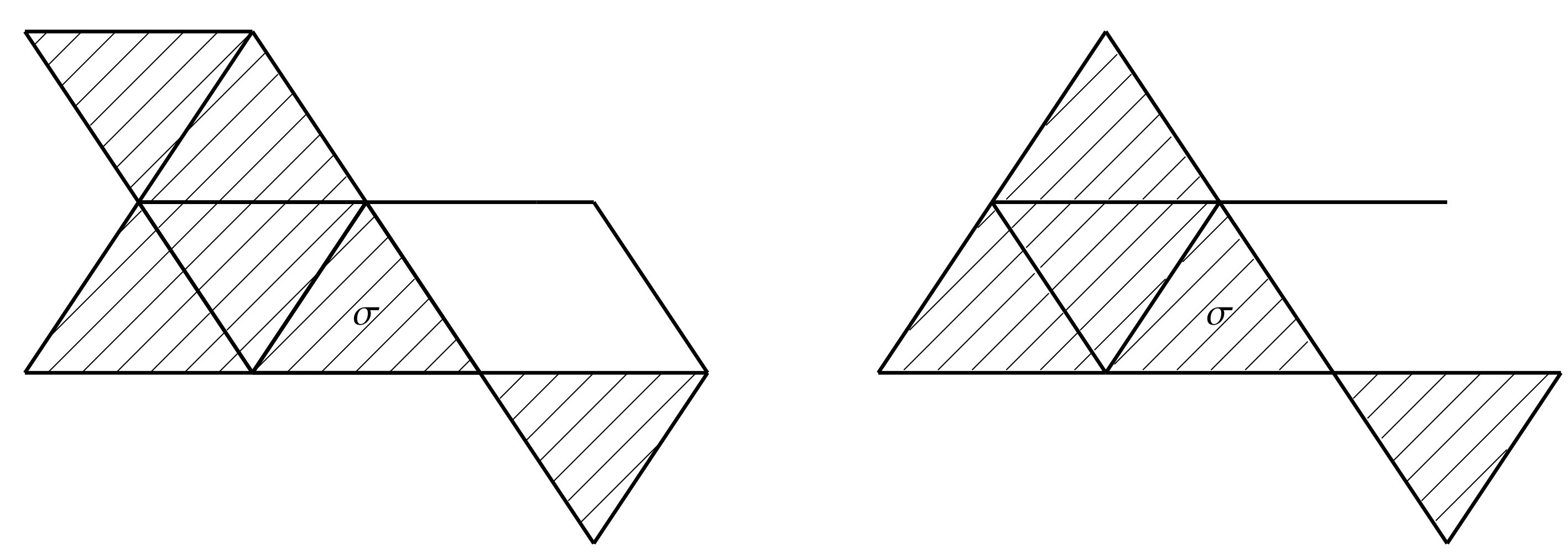}
  \caption{A 2-dimension flag complex at the left and the star cluster of a simplex $\sigma$ at the right.}
  \label{fig:star cluster}
\end{figure}

\begin{prop}[{\cite[Lemma 3.2]{Bar13}}]\label{prop:collapsible}
  The star cluster of a simplex in a flag complex is collapsible.
\end{prop}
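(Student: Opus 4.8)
The plan is to peel off the stars of the vertices of $\sigma$ one at a time, exploiting at each stage the fact that in a flag complex the relevant intersections are cones. Write $\sigma=\{v_0,v_1,\dots,v_k\}$ and argue by induction on $k$. If $k=0$ there is nothing to prove, since $\mathrm{SC}_K(\sigma)=\st_K(v_0)$ is a cone with apex $v_0$ and hence collapsible. For the inductive step put $\sigma'=\{v_0,\dots,v_{k-1}\}$, so that $\sigma'\in K$ and
\[
\mathrm{SC}_K(\sigma)=\mathrm{SC}_K(\sigma')\cup\st_K(v_k).
\]
I will show that $\mathrm{SC}_K(\sigma)\searrow\mathrm{SC}_K(\sigma')$; the inductive hypothesis, applied to the $(k-1)$-simplex $\sigma'$ of the flag complex $K$, then finishes the proof.

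The first ingredient is an intersection identity. I claim that $N:=\st_K(v_k)\cap\mathrm{SC}_K(\sigma')$ has the form $v_k*Z$ for the subcomplex $Z=\{\rho\in N : v_k\notin\rho\}\subseteq\lk_K(v_k)$. Since $\st_K(v_k)=v_k*\lk_K(v_k)$, the only point to verify is that $N$ is closed under adjoining $v_k$, and this is exactly where flagness enters: if $\rho\in\st_K(v_k)$ with $v_k\notin\rho$ and $\rho\cup\{v_i\}\in K$ for some $i\le k-1$, then every edge of $\rho\cup\{v_i,v_k\}$ lies in $K$ — namely the edges inside $\rho$, the edges from $\rho$ to $v_i$, the edges from $\rho$ to $v_k$, and the edge $\{v_i,v_k\}\subseteq\sigma$ — so the flag condition gives $\rho\cup\{v_i,v_k\}\in K$, which says $\rho\cup\{v_k\}\in\mathrm{SC}_K(\sigma')$ as well as in $\st_K(v_k)$.

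The second ingredient is that a cone collapses onto any subcone: for a subcomplex $M$ of a simplicial complex $L$ and a vertex $v\notin L$ one has $v*L\searrow v*M$, by pairing each $\rho\in L\setminus M$ with $\rho\cup\{v\}$ and performing the resulting elementary collapses in order of decreasing dimension — a coface $\rho\cup\{w\}$ of $\rho$ lying in $L$ cannot lie in $M$ unless $\rho$ does, so it has already been removed. Applying this to $\st_K(v_k)=v_k*\lk_K(v_k)$ and $Z\subseteq\lk_K(v_k)$ yields $\st_K(v_k)\searrow N$. The third ingredient is a gluing fact: if $\Delta=\Delta_1\cup\Delta_2$ is a union of subcomplexes with $\Delta_1\searrow\Delta_1\cap\Delta_2$, then $\Delta\searrow\Delta_2$; indeed every simplex of $\Delta_1\setminus\Delta_2$ has all of its cofaces in $\Delta_1$ (a coface in $\Delta_2$ would drag the face into $\Delta_2$), so the free‑face conditions witnessing $\Delta_1\searrow\Delta_1\cap\Delta_2$ stay valid in $\Delta$ and the same elementary collapses remove $\Delta_1\setminus\Delta_2$ from $\Delta$. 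Taking $\Delta_1=\st_K(v_k)$ and $\Delta_2=\mathrm{SC}_K(\sigma')$ gives $\mathrm{SC}_K(\sigma)\searrow\mathrm{SC}_K(\sigma')$, completing the induction.

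The step I would treat most carefully is the intersection identity $\st_K(v_k)\cap\mathrm{SC}_K(\sigma')=v_k*Z$: this is the only place the flag hypothesis is genuinely used, and without it a simplex could lie in $\st_K(v_k)$ and in some $\st_K(v_i)$ without $\rho\cup\{v_i,v_k\}$ being a simplex, so the intersection would fail to be a cone with apex $v_k$ and the whole scheme would break down. Everything else is routine bookkeeping with free faces. One may also dispense with the nested induction and instead note directly that $\bigcup_{i\le j}\st_K(v_i)\searrow\bigcup_{i\le j-1}\st_K(v_i)$ for each $j\ge1$ by the identical argument, so that $\mathrm{SC}_K(\sigma)$ collapses onto the cone $\st_K(v_0)$.
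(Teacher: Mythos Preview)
The paper does not supply its own proof of this proposition; it simply records the statement and cites Barmak's paper \cite{Bar13} for the argument. Your proof is correct and is essentially Barmak's original one: induct on the number of vertices of $\sigma$, peel off $\st_K(v_k)$ at each step, and use the flag hypothesis exactly once to see that $\st_K(v_k)\cap\mathrm{SC}_K(\sigma')$ is a subcone $v_k*Z$ of $\st_K(v_k)=v_k*\lk_K(v_k)$, so that the cone collapse $v_k*\lk_K(v_k)\searrow v_k*Z$ extends to $\mathrm{SC}_K(\sigma)\searrow\mathrm{SC}_K(\sigma')$. The three ingredients you isolate --- the intersection identity, the collapse of a cone onto any subcone, and the gluing lemma for collapses --- are precisely the pieces of Barmak's argument, and your identification of the intersection identity as the one step genuinely requiring flagness is accurate.
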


\begin{lem}[{\cite[Lemma 3.5]{DS23}}]\label{lem:flag complex}
  $\M(K)$ is a flag complex if and only if $K$ is a tree.
\end{lem}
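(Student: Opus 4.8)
The plan is to translate flagness of $\M(K)$ into a property of partial matchings on the Hasse diagram $\H(K)$ and then check that property directly. A simplicial complex is flag precisely when every set of pairwise‑adjacent vertices spans a simplex; and by Remark~\ref{rem:Morse cplx} the vertices of $\M(K)$ are the edges of $\H(K)$, two of them are adjacent in $\M(K)$ exactly when the corresponding codimension‑$1$ pairs of simplices of $K$ are disjoint, and the simplices of $\M(K)$ are exactly the acyclic matchings of $\H(K)$. Since any pairwise‑disjoint family of codimension‑$1$ pairs is automatically a partial matching, this reduces the lemma to the statement: \emph{every partial matching of $\H(K)$ is acyclic if and only if $K$ is a tree}.

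For the ``if'' direction I would assume $K$ is a tree and suppose, for contradiction, that some partial matching $M$ of $\H(K)$ contains a cycle $b_1\succ a_1\prec b_2\succ a_2\prec\dots\prec b_p\succ a_p\prec b_{p+1}=b_1$ with $p\ge 2$. Since a tree has dimension at most $1$, every covering relation of $\H(K)$ has the form (vertex)~$\prec$~(edge); hence each $b_i$ is an edge of $K$, each $a_i$ is a vertex, and $b_i$ has endpoints $a_{i-1}$ and $a_i$ (indices mod $p$). The conditions $a_i\ne a_j$, $b_i\ne b_j$ then make $b_1,\dots,b_p$ a closed sequence of distinct edges meeting consecutively, i.e.\ a cycle in $K$ of length $p\ge 3$ (if $p=2$ then $b_1$ and $b_2$ would both equal the edge $\{a_1,a_2\}$, contradicting $b_1\ne b_2$). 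This contradicts that $K$ is a tree, so $\M(K)$ is flag.

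For the ``only if'' direction I would argue the contrapositive, assuming $K$ connected and not a tree. If $\dim K\ge 2$, the boundary of any $2$-simplex of $K$ is a $3$-cycle; if $\dim K\le 1$, then $K$ is a connected graph that is not acyclic and so contains a cycle. Either way the underlying graph of $K$ contains a cycle $v_0v_1\cdots v_{k-1}v_0$ with $k\ge 3$, and I would consider the primitive gradient vector fields $f_i\coloneqq(v_i)v_{i+1}$ for $i=0,\dots,k-1$ (indices mod $k$). For $i\ne j$ the simplices $v_i$, $v_iv_{i+1}$, $v_j$, $v_jv_{j+1}$ are pairwise distinct, so $f_i$ and $f_j$ share no simplex of $K$ and thus span an edge of $\M(K)$; hence $\{f_0,\dots,f_{k-1}\}$ is a clique in the $1$-skeleton of $\M(K)$. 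But $\bigcup_i f_i$ is not a gradient vector field, since $v_0, v_0v_1, v_1, v_1v_2,\dots, v_{k-1}, v_{k-1}v_0, v_0$ is a non‑trivial closed $V$-path; so this clique spans no simplex of $\M(K)$, and $\M(K)$ is not flag.

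I expect the routine work to be only the index arithmetic mod $p$ (resp.\ mod $k$) and the verification that the two displayed sequences really are a cycle and a closed $V$-path. The one step needing genuine care is the opening reduction of the ``only if'' direction: one must make sure that whenever $K$ fails to be a tree there is an honest cycle in its underlying graph — this is where the case $\dim K\ge 2$ is disposed of by passing to the boundary $3$-cycle of a face — and that the standing hypothesis that $K$ is connected is what lets ``$K$ has no cycle'' be promoted to ``$K$ is a tree''.
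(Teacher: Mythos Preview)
This lemma is quoted from \cite[Lemma~3.5]{DS23} and the present paper gives no proof of its own, so there is nothing here to compare your argument against. Your proof is correct and follows the natural route: via Remark~\ref{rem:Morse cplx}, flagness of $\M(K)$ is exactly the assertion that every partial matching on $\H(K)$ is acyclic, and you then identify matching--cycles with cycles in the $1$-skeleton of $K$ in both directions. The caveat you raise about connectivity is well placed: as stated, the lemma needs $K$ connected (a disconnected forest is not a tree, yet its Morse complex is still flag by your ``if'' argument), and indeed all the complexes actually used in this paper are connected graphs.
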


Proposition \ref{prop:collapsible} and Lemma \ref{lem:flag complex} are one of the main tools we use in this paper.
The other tool is the following Lemma:

\begin{lem}[{\cite[Lem 4.2]{Jon08} and \cite[Lemma 4.1]{Her05}}][Cluster Lemma]\label{lem: cluster lemma}
  Let $\Delta$ be a simplicial complex which is decomposed into a union of collections $\Delta_\sigma$ of simplices, indexed by the elements $\sigma$ in partially ordered set $P$ which has a unique minimal element.
  Furthermore, assume that this decomposition is as follows:
  \begin{enumerate}
    \item Each simplex belongs to exactly one $\Delta_\sigma$.
    \item For each $\sigma\in P$, $\bigcup_{\tau\leq\sigma}\Delta_\tau$ is a subcomplex of $\Delta$.
  \end{enumerate}
  For each $\sigma\in P$, let $M_\sigma$ be an acyclic matching in $\Delta_\sigma$.
  Then, $\bigcup_{\sigma\in P}M_\sigma$ is an acyclic matching on $\Delta$.
\end{lem}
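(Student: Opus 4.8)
The plan is to check the two defining conditions of an acyclic matching for $M\coloneqq\bigcup_{\sigma\in P}M_\sigma$, regarded as a set of pairs in the face poset of $\Delta$: first that $M$ is a partial matching, and then that $M$ admits no cycle. The first point is bookkeeping, using only that the $\Delta_\sigma$ partition the simplices of $\Delta$; the second is where both hypotheses on the decomposition are genuinely used.

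First I would verify that $M$ is a partial matching. Every pair occurring in some $M_\sigma$ is a pair of simplices of $\Delta$ differing in dimension by one (as in any matching of a Hasse diagram), hence a covering relation of the face poset of $\Delta$, so the first condition in the definition of a partial matching holds for $M$. For the second condition, suppose a simplex $x$ of $\Delta$ appeared in a pair of $M_\sigma$ and in a pair of $M_{\sigma'}$. Then $x\in\Delta_\sigma\cap\Delta_{\sigma'}$, so $\sigma=\sigma'$ by condition (1) of the decomposition, and then $x$ lies in exactly one pair because $M_\sigma$ is itself a matching. Hence $M$ is a partial matching.

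Next I would rule out cycles. Assume for contradiction that there is a cycle
\begin{equation}
  b_1\succ a_1\prec b_2\succ a_2\prec\cdots\prec b_p\succ a_p\prec b_{p+1}=b_1,
\end{equation}
with $(a_i,b_i)\in M$ for $1\le i\le p$, $p\ge2$, the $a_i$ pairwise distinct and the $b_i$ pairwise distinct. For each $i$ let $\sigma_i\in P$ be the unique index with $(a_i,b_i)\in M_{\sigma_i}$, so that $a_i,b_i\in\Delta_{\sigma_i}$, and set $\sigma_{p+1}\coloneqq\sigma_1$ (consistent, since $b_{p+1}=b_1\in\Delta_{\sigma_1}$). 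The key claim is that $\sigma_i\le\sigma_{i+1}$ for every $i$. Indeed, $b_{i+1}\succ a_i$ shows $a_i$ is a face of $b_{i+1}$; since $b_{i+1}\in\Delta_{\sigma_{i+1}}\subseteq\bigcup_{\tau\le\sigma_{i+1}}\Delta_\tau$ and the latter is a subcomplex of $\Delta$ by condition (2), it is closed under taking faces, so $a_i\in\bigcup_{\tau\le\sigma_{i+1}}\Delta_\tau$. Thus $a_i\in\Delta_\tau$ for some $\tau\le\sigma_{i+1}$, and since $a_i$ also lies in $\Delta_{\sigma_i}$, condition (1) forces $\tau=\sigma_i$, whence $\sigma_i\le\sigma_{i+1}$. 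Chaining these inequalities around the cycle gives $\sigma_1\le\sigma_2\le\cdots\le\sigma_p\le\sigma_{p+1}=\sigma_1$, so all the $\sigma_i$ equal a single $\sigma\in P$. Then the displayed cycle is entirely a cycle for $M_\sigma$ inside $\Delta_\sigma$, contradicting the acyclicity of $M_\sigma$. Therefore $M$ has no cycle, and $M$ is an acyclic matching.

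I expect the only real obstacle to be the inequality $\sigma_i\le\sigma_{i+1}$: it is the single place where both hypotheses enter — the subcomplex condition (2) to pass from $b_{i+1}$ down to its face $a_i$, and the disjointness condition (1) to identify the resulting index with $\sigma_i$ — and it is also where one must be careful with the cyclic indexing and with the direction of the inequality. The assumption that $P$ has a unique minimal element is not needed for this argument; I would retain it only to keep the statement in line with \cite{Jon08,Her05}.
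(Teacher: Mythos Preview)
Your argument is correct and is the standard proof of the Cluster Lemma. Note, however, that the paper does not supply its own proof of this statement: Lemma~\ref{lem: cluster lemma} is simply quoted with attribution to \cite{Jon08,Her05} and used as a black box thereafter. So there is no in-paper proof to compare against; your write-up is a faithful rendering of the argument in those references, hinging exactly where it should on condition~(ii) to pass to faces and condition~(i) to pin down the index, yielding the chain $\sigma_1\le\cdots\le\sigma_p\le\sigma_1$. Your observation that the unique minimal element plays no role in the acyclicity argument is also accurate; in the cited sources it is there to guarantee that the union $\bigcup_{\tau\le\sigma_0}\Delta_\tau$ over the minimal $\sigma_0$ is a genuine subcomplex containing the empty simplex, which matters for the downstream homotopy-type conclusions but not for the matching being acyclic.
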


Lemma \ref{lem: cluster lemma} shows how to construct an acyclic matching on an entire complex by combining acyclic matchings on its individual parts. The crucial information is what remains unmatched—that is, the critical simplices. In some cases, a specific set of critical simplices uniquely determines the homotopy type of the original complex. This is demonstrated in Forman's classical result.

\begin{thm}[{\cite[Corollary 3.5]{For98}}]\label{thm:Morse theorem}
  Let $K$ be a simplicial complex and $M$ an acyclic matching on $K$ with $m_i$ critical simplices of dimension $i$.
  Then $K$ has the homotopy type of a CW complex with exactly $m_i$ cells of dimension $i$.
  In particular, if $m_0=1, m_n=k$, and $m_j=0$ for all $j\ne0, n$, then $K$ has the homotopy type of a $k$-fold wedge of $S^n$.
\end{thm}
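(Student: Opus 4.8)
Both assertions are an instance of the fundamental theorem of discrete Morse theory, and the plan is to prove the first by induction on the number of non-empty simplices of $K$, peeling off one piece at each step according to the matching. The starting observation is that an acyclic matching $M$ on $K$ is exactly the data of a \emph{modified Hasse diagram} $\H_M(K)$ — obtained from $\H(K)$ by reversing the orientation of every edge joining a matched pair $a\prec b$ with $(a,b)\in M$ — that has no directed cycle: a forbidden configuration $b_1\succ a_1\prec b_2\succ\dots\prec b_p\succ a_p\prec b_1$ in the definition of an acyclic matching is precisely a directed cycle of $\H_M(K)$, and conversely. Thus $\H_M(K)$ is a finite directed acyclic graph, so whenever $K\ne\varnothing$ it has a source $\sigma$, i.e.\ a simplex of in-degree $0$.

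The heart of the argument is the classification of sources. Counting the edges of $\H_M(K)$ at $\sigma$, its in-degree equals the number of cofaces of $\sigma$ not matched with $\sigma$, plus $1$ if $\sigma$ is the larger element of a matched pair. Hence a source $\sigma$ has at most one coface, is not matched downward, and is matched with its coface whenever it has one; so $\sigma$ is either (i) a critical facet of $K$, or (ii) a simplex with a unique codimension-one coface $\rho$ with $(\sigma,\rho)\in M$. In case (ii) the standard fact that a simplex with a unique codimension-one coface is a free face shows that $\rho$ is the only simplex properly containing $\sigma$. In case (i), let $K'=K\setminus\{\sigma\}$: this is a simplicial complex since $\sigma$ is maximal, $K$ is obtained from $K'$ by attaching a $(\dim\sigma)$-cell along the inclusion $\partial\sigma\hookrightarrow K'$, and $M$ restricts to an acyclic matching on $K'$ whose critical simplices are those of $(K,M)$ with $\sigma$ removed. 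In case (ii), let $K'=K\setminus\{\sigma,\rho\}$: then $K$ elementarily collapses onto $K'$, hence deformation retracts onto it, and $M\setminus\{(\sigma,\rho)\}$ is an acyclic matching on $K'$ with the same critical simplices as $(K,M)$.

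In both cases $K'$ has fewer simplices, so by the inductive hypothesis $K'\simeq Y'$ for a CW complex $Y'$ with the prescribed number of cells in each dimension. In case (ii) we get $K\simeq K'\simeq Y'$ at once. In case (i), compose $\partial\sigma\hookrightarrow K'$ with a homotopy equivalence $K'\simeq Y'$ and replace it by a homotopic cellular map $\psi\colon S^{\dim\sigma-1}\to Y'$ using cellular approximation; the gluing lemma for attaching a cell along homotopy-equivalent spaces then gives $K\simeq Y'\cup_\psi e^{\dim\sigma}$, a CW complex with exactly one more $(\dim\sigma)$-cell than $Y'$. Tallying over the induction, the final CW complex has exactly $m_i$ cells of dimension $i$. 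For the last sentence, that complex has a single $0$-cell, $k$ cells of dimension $n$ (necessarily $n\ge1$) and no others; since its $(n-1)$-skeleton is a point, every attaching map $S^{n-1}\to\{*\}$ is constant, so the complex is $\bigvee^k S^n$, whence $K\simeq\bigvee^k S^n$.

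I expect the main obstacle to be the combinatorial bookkeeping around $\H_M(K)$: checking that the forbidden cycle condition corresponds exactly to directed cycles, that every source has one of the two forms above, and that passing to $K'$ (deleting a vertex, and an edge of the matching in case (ii)) keeps the graph directed acyclic, so that the restricted matching is still acyclic — together with verifying that in case (ii) the removal of the matched pair is a genuine elementary simplicial collapse. The topological steps (cellular approximation, the gluing lemma) are routine. An alternative, following Forman's original proof, would build a discrete Morse function compatible with $M$ and analyze its sublevel complexes $K_c$, which change by a deformation retraction when $c$ crosses a non-critical value and by a single cell attachment when it crosses a critical one; the collapsing argument above seems more direct in the present combinatorial formulation.
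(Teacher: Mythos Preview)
The paper does not give its own proof of this theorem; it is quoted verbatim from Forman \cite[Corollary~3.5]{For98} as a foundational input and used as a black box in the arguments that follow. There is therefore nothing in the paper to compare your proof against.

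That said, your argument is correct and is one of the standard combinatorial proofs of the main theorem of discrete Morse theory. The analysis of sources in the modified Hasse diagram $\H_M(K)$ is right: in-degree zero forces $\sigma$ either to be a critical facet or to have a unique codimension-one coface $\rho$ with $(\sigma,\rho)\in M$, and your check that such a $\rho$ is in fact the \emph{only} simplex properly containing $\sigma$ (since any larger simplex would produce a second codimension-one coface of $\sigma$) is precisely what makes the removal of $\{\sigma,\rho\}$ a genuine elementary collapse. Passing to the induced subgraph of a DAG keeps it acyclic, so the restricted matching on $K'$ is again acyclic in both cases, and the induction, together with cellular approximation and the gluing lemma in the cell-attachment step, goes through cleanly. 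The only quibble is the parenthetical ``necessarily $n\ge1$'': this is not literally forced by the hypotheses (for $n=0$ one would get $k=1$ and $K\simeq *$, which is not $S^0$), but the statement is clearly meant for $n\ge1$ and the point is harmless.
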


\begin{dfn}[cf. \cite{DS23}]\label{dfn:extended star graph}
  An \emph{extended star graph}, denoted by $S_{m, n}$, is the graph that is a one-point union of $m$ paths of length 1 and $n$ paths of length 2. The vertex $c$ shared by these paths is called the \emph{center}.
\end{dfn}

\begin{dfn}[cf. \cite{RS20}]
  Let $v$ be a vertex in $K$, and $V$ be a discrete vector field on $K$.
  We say that \emph{$V$ roots in $v$} or \emph{$V$ is rooted in $v$} if $v$ is the unique critical simplex of $V$.
  Such a vertex is called the \emph{root} of $V$.
\end{dfn}

\begin{thm}[{\cite[Theorem 3.12]{DS23}}]\label{thm:3a}
  For any $n\ge1$,
  \begin{equation}
    \M(S_{0, n})\simeq(S^n)^{\vee(n-1)}.
  \end{equation}
\end{thm}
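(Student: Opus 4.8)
The plan is to use that $\M(S_{0,n})$ is a flag complex, remove a well-chosen family of vertices by strong collapses, and recognize the result as an iterated join. Write $c$ for the center and, for the $i$-th leg of $S_{0,n}$, let $b_i$ be the neighbor of $c$ and $a_i$ the leaf, so that the edges of that leg are $cb_i$ and $b_ia_i$ and $\{a_i,a_ib_i\}$ is a leaf. Since $S_{0,n}$ is a tree, Lemma~\ref{lem:flag complex} shows $\M(S_{0,n})$ is a flag complex, hence it is determined by its $1$-skeleton, i.e.\ by the incompatibility relation among the $4n$ primitive gradient vector fields $(c)b_i,\ (b_i)c,\ (b_i)a_i,\ (a_i)b_i$ for $1\le i\le n$. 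Two primitives are incompatible exactly when they share a simplex of $S_{0,n}$, and checking the cases shows the incompatible pairs are precisely
\begin{equation}
  \{(c)b_i,(c)b_j\}\ (i\ne j),\qquad \{(c)b_i,(b_i)c\},\qquad \{(b_i)c,(b_i)a_i\},\qquad \{(b_i)a_i,(a_i)b_i\}.
\end{equation}

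Next I would strip away the vertices $(b_i)c$. For each $i$, the hypotheses of Lemma~\ref{lem:dominated vertex} hold with the leaf $\{a_i,a_ib_i\}$ and the vertex $c$ (a neighbor of $b_i$ distinct from $a_i$), so $(b_i)c$ is dominated in $\M(S_{0,n})$ by $(a_i)b_i$; concretely, any simplex containing $(b_i)c$ must omit $(b_i)a_i$ (these being incompatible), hence omits the unique non-neighbor of $(a_i)b_i$ and so may be enlarged by $(a_i)b_i$. This domination is unaffected by deleting the other $(b_j)c$, so performing the elementary strong collapses in turn yields $\M(S_{0,n})\searrow\searrow Y$, where $Y$ is the flag complex on the $3n$ vertices $(c)b_i,\ (b_i)a_i,\ (a_i)b_i$ whose only non-edges are $\{(c)b_i,(c)b_j\}$ for $i\ne j$ and $\{(b_i)a_i,(a_i)b_i\}$; in particular $\M(S_{0,n})\simeq Y$.

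It remains to identify $Y$. Its graph of non-edges is the disjoint union of a complete graph $K_n$ on $\{(c)b_1,\dots,(c)b_n\}$ and $n$ copies of $K_2$, one on each pair $\{(b_i)a_i,(a_i)b_i\}$. Since the clique complex of the complement of a disjoint union of graphs is the join of the clique complexes of the complements, $Y$ is the join of the discrete space $D_n$ of $n$ points with $n$ copies of $S^0$; that is, $Y\cong D_n*(S^0)^{*n}\cong D_n*S^{n-1}$. Viewing $D_n*S^{n-1}$ as a union of $n$ cones on $S^{n-1}$ glued along their common base and attaching them one at a time, the first cone is contractible while each further cone is glued along a null-homotopic copy of $S^{n-1}$ and hence contributes a wedge summand $\Sigma S^{n-1}=S^n$; therefore $Y\simeq(S^n)^{\vee(n-1)}$, and so $\M(S_{0,n})\simeq(S^n)^{\vee(n-1)}$.

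I expect the main obstacle to lie in the first two steps: enumerating the incompatible pairs correctly and, more delicately, verifying that the domination of $(b_i)c$ by $(a_i)b_i$ persists after the earlier strong collapses, so that all $n$ of them can be carried out consecutively; once $Y$ is reached the rest is a routine manipulation of joins. Alternatively, one could argue in the manner of Donovan--Scoville by running discrete Morse theory on $\M(S_{0,n})$ directly: partition its simplices according to how the center $c$ is used (critical, or paired with some $cb_i$), combine acyclic matchings via the Cluster Lemma~\ref{lem: cluster lemma} together with the collapsibility of star clusters in flag complexes, Proposition~\ref{prop:collapsible}, and read off one critical $0$-cell and $n-1$ critical $n$-cells using Theorem~\ref{thm:Morse theorem}.
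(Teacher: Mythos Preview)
Your proof is correct but follows a different route from the paper. The paper argues via discrete Morse theory on $\M(S_{0,n})$: it lets $\Delta_0$ be the star cluster of the gradient vector field rooted at the center $c$ (collapsible by Proposition~\ref{prop:collapsible} and Lemma~\ref{lem:flag complex}), observes that the complement $\Delta_1$ consists of exactly the $n+1$ simplices containing every outward leaf-pair, matches one adjacent pair of these, and is left with one critical $0$-cell and $n-1$ critical $n$-cells, finishing with Theorem~\ref{thm:Morse theorem} and the Cluster Lemma~\ref{lem: cluster lemma}---precisely the ``alternative'' you sketch in your closing paragraph. Your main argument instead strips off the $n$ dominated vertices $(b_i)c$ by strong collapses and recognizes the resulting flag complex directly as the join of a discrete $n$-point set with $n$ copies of $S^0$, then computes that join by an iterated cone-gluing. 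This is more elementary for this particular statement (no Cluster Lemma, no Forman) and is in fact closer in spirit to the method the paper itself uses for Theorems~\ref{thm:3b}, \ref{thm:4b}, and \ref{thm:4c}; the paper's star-cluster approach, on the other hand, is the one that scales to the intricate matchings of Theorem~\ref{thm:main theorem}. (A minor notational point: your $a_i,b_i$ are the paper's $b_i,a_i$.)
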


\begin{proof}
  Let $c$ be the center of $S_{0, n}$ and let $\{a_ib_i, b_i\}$ be the leaf of each extended leaf of length 2, $i=1, 2,\dots, n$.
  Let $\sigma_0$ be the star cluster in $\M(S_{0, n})$ of the gradient vector field rooted in $c$ (see Figure \ref{fig:star cluster of extended star graph}).
  Now define $\Delta_0\coloneqq\sigma_0$, and $\Delta_1\coloneqq\M(S_{0, n})-\sigma_0$.
  Clearly, $\Delta_0\cup\Delta_1=\M(S_{0, n})$ so we can apply Lemma \ref{lem: cluster lemma}.
  We define an acyclic matching on $\Delta_0, \Delta_1$ as follows:

  First, $\Delta_0$ is collapsible by Proposition \ref{prop:collapsible} and Lemma \ref{lem:flag complex} so there is an acyclic matching on $\Delta_0$ with a single critical 0-simplex.

  Next, we consider $\Delta_1$.
  Any element of $\Delta_1$ contains the form $\bigcup_{i=1}^n\{(a_i)b_i\}$, and possibly one of $(c)a_i$.
  Therefore, $\Delta_1$ has $n+1$ elements.
  Match $\bigcup_{i=1}^n\{(a_i)b_i\}$ with $\bigcup_{i=1}^n\{(a_i)b_i\}\cup\{(c)a_1\}$ (see Figure \ref{fig:3a}).
  Then, there are $n-1$ unmatched $n$-simplices which contains $(c)a_i$ for $i=2, 3,\dots, n$.
  Thus, by Theorem \ref{thm:Morse theorem}, $\M(S_{0, n})\simeq (S^n)^{\vee(n-1)}$.
\end{proof}

\begin{figure}[htbp]
  \centering
  \includegraphics[scale=0.13]{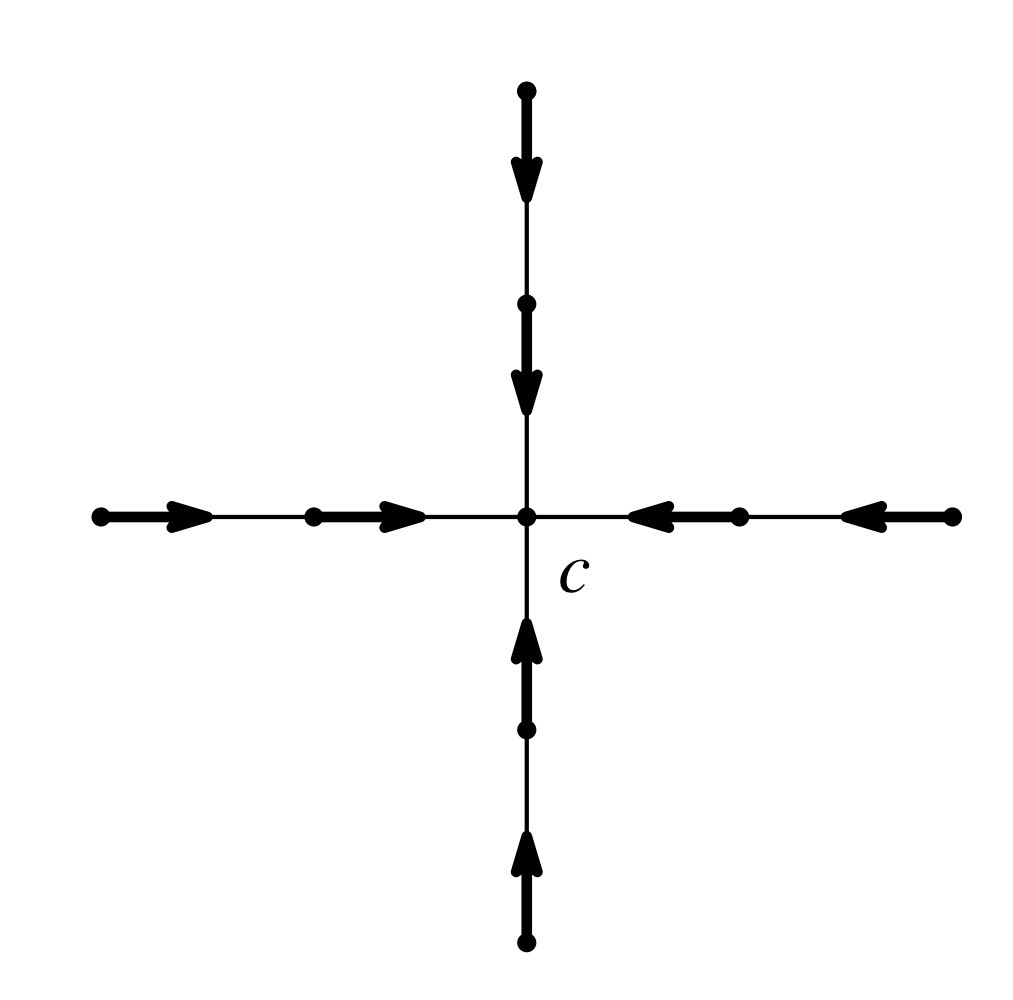}
  \caption{$\Delta_0$ is the union of the star of each arrow.}
  \label{fig:star cluster of extended star graph}
\end{figure}

\begin{figure}[htbp]
  \centering
  \includegraphics[scale=0.13]{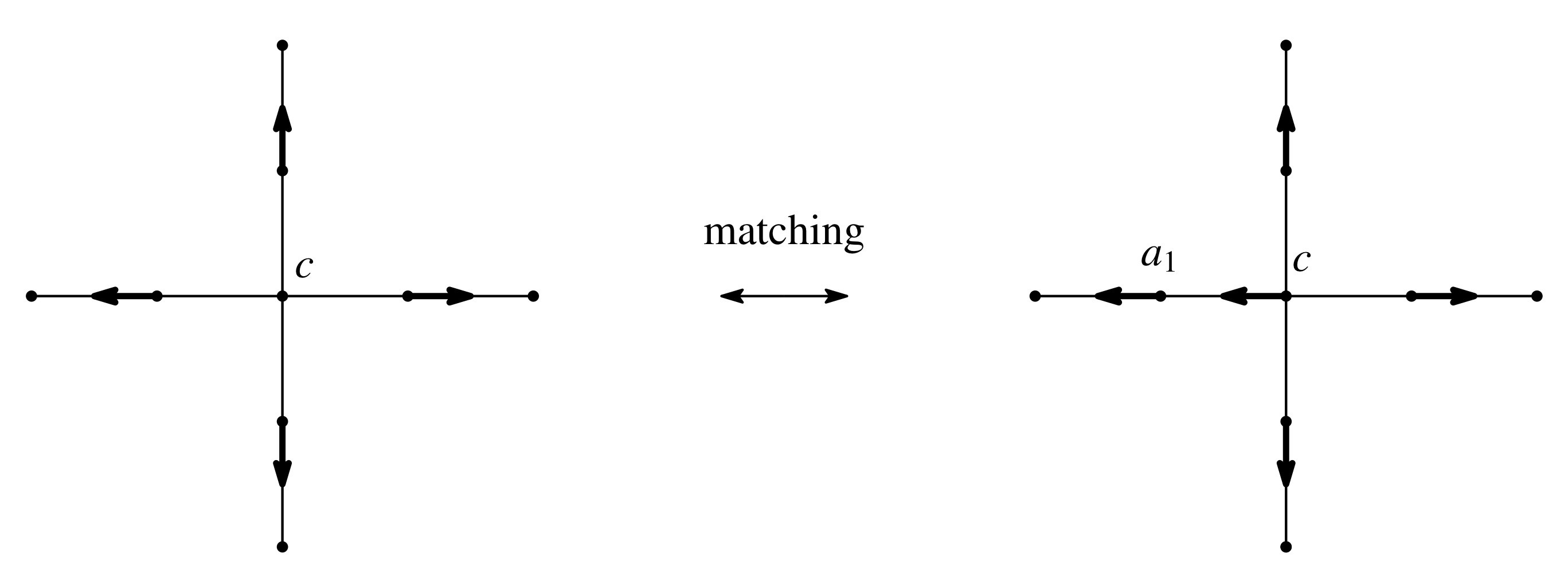}
  \caption{Match $\bigcup_{i=1}^n\{(a_i)b_i\}$ with $\bigcup_{i=1}^n\{(a_i)b_i\}\cup\{(c)a_1\}$.}
  \label{fig:3a}
\end{figure}

\begin{dfn}[cf. \cite{DLS22,DS23}]\label{dfn:poset}
  Given a finite poset $P$, construct a simplicial complex $f(P)$ as follows:
  The vertex set of $f(P)$ is the edge set of the Hasse diagram $\H(P)$ of $P$.
  Then, let $\sigma=e_1e_2\dots e_k$ be a simplex of $f(P)$ if and only if the edges $e_1, e_2,\dots, e_k$ form an acyclic matching of $P$.
\end{dfn}

Viewing the Morse complex as defined in Remark \ref{rem:Morse cplx}, note that for any simplicial complex $K$, $\M(K)\simeq f(\H(K))$.
The following theorem was not proved in \cite{DS23}.

\begin{thm}\label{thm:3b}
  For any $n\ge1$,
  \begin{equation}
    \M(S_{1, n})\simeq S^n.
  \end{equation}
\end{thm}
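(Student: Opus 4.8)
The plan is to exploit the structure of $S_{1,n}$ as an extended star graph with one extra leaf of length $1$ attached at the center $c$, and to reduce $\M(S_{1,n})$ to something manageable via strong collapses on the Hasse diagram picture $f(\H(S_{1,n}))$, exactly in the spirit of \cite{DLS22}. Label the center $c$, let $\{d, cd\}$ be the leaf of length $1$, and let $\{a_i b_i, b_i\}$ with $b_i$ adjacent to $c$ be the $n$ leaves of the length-$2$ branches, $i = 1,\dots,n$. First I would enumerate the primitive gradient vector fields: on the $cd$-edge we have $(d)c$ and $(c)d$; on each length-$2$ branch we have $(a_i)b_i$, $(b_i)a_i$, $(b_i)c$, and $(c)b_i$. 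These are the vertices of $\M(S_{1,n})$.

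Next I would apply Lemma \ref{lem:dominated vertex} repeatedly to strong-collapse away the ``inward-pointing'' primitives coming from the leaves. Since $\{d, cd\}$ is a leaf and each $b_i$ is a neighbor of $c$ other than $d$, the primitive $(c)b_i$ is dominated by $(d)c$, so we may delete all the $(c)b_i$; similarly $(c)a_i$ — if present as a vertex, i.e.\ when $b_i$ were a leaf, which it is not here, so this does not arise — and also each $(c)d$-type issue. After removing the $(c)b_i$, the leaf $\{a_i b_i, b_i\}$ lets us invoke Lemma \ref{lem:dominated vertex} again (with $b_i$ playing the role of the leaf-neighbor adjacent to $c$) to show $(b_i)c$ is dominated by $(a_i)b_i$, so delete all the $(b_i)c$. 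I expect that after these strong collapses the surviving vertices are $(d)c$, $(c)d$, and the $2n$ branch-internal primitives $(a_i)b_i, (b_i)a_i$, and that the resulting complex is a join of small pieces: the $cd$-edge contributes a discrete pair $\{(d)c, (c)d\}$ which is $S^0$, while each branch contributes an $S^0$ coming from the incompatible pair $(a_i)b_i, (b_i)a_i$ — but one has to check carefully which of these are actually non-adjacent after the reduction, i.e.\ which matchings remain acyclic, and whether $(d)c$ and $(c)d$ really become free to join with everything. If the reduced complex is literally $S^0 * (S^0)^{*?}$ one would get a sphere of the wrong dimension, so the bookkeeping of exactly which simplices survive as critical is the crux.

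Alternatively — and this is probably the cleaner route — I would mimic the proof of Theorem \ref{thm:3a} verbatim using the Cluster Lemma. Take $\sigma_0$ to be the star cluster in $\M(S_{1,n})$ of the gradient vector field rooted in $c$; set $\Delta_0 = \sigma_0$ and $\Delta_1 = \M(S_{1,n}) - \sigma_0$. Since $S_{1,n}$ is a tree, $\M(S_{1,n})$ is a flag complex by Lemma \ref{lem:flag complex}, so $\Delta_0$ is collapsible by Proposition \ref{prop:collapsible} and carries an acyclic matching with one critical $0$-cell. The work is then to describe $\Delta_1$: every element of $\Delta_1$ avoids all primitives rooted near $c$, so it must contain $\bigcup_{i=1}^n \{(a_i)b_i\}$ together with exactly one of the two primitives on the $cd$-edge — either $(d)c$ or $(c)d$ — (and nothing from the $(c)b_i, (b_i)c$ primitives, since those lie in $\sigma_0$). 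The subtlety compared to $S_{0,n}$ is that here the extra freedom lives on the leaf $cd$ rather than on the branches, so $\Delta_1$ should have exactly $2$ maximal simplices of dimension $n$, matched in pairs except for one leftover. Counting gives $m_0 = 1$, $m_n = 1$, and $m_j = 0$ otherwise, whence $\M(S_{1,n}) \simeq S^n$ by Theorem \ref{thm:Morse theorem}.

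The main obstacle I anticipate is precisely the exact description of $\Delta_1$ and of the boundary of $\sigma_0$ — one must be sure that $\sigma_0$ really does absorb every primitive involving $c$ except through the $cd$-leaf in the right way, and that the leftover critical simplex in $\Delta_1$ is a genuine $n$-simplex (i.e.\ that the $n$ regular pairs $\{(a_i)b_i\}_i$ together with $(c)d$, or with $(d)c$, form an acyclic matching of $\H(S_{1,n})$ of full size $n+1$, not $n$). A careful case check of small $n$ (say $n = 1, 2$) to pin down the combinatorics, followed by the general argument, should settle it; I would also double-check against Lemma \ref{lem:suspension} and the known value $\M(S_{0,n}) \simeq (S^n)^{\vee(n-1)}$ for sanity, noting that attaching a single length-$1$ leaf at the center behaves quite differently from attaching another length-$2$ branch.
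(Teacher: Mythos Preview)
Your first approach---strong collapses via Lemma~\ref{lem:dominated vertex} followed by recognizing the result as a join of copies of $S^0$---is exactly the paper's proof, and it works without any of the hedging you add. In the paper's labeling ($c$--$a_i$--$b_i$ with $b_i$ the leaf), one applies Lemma~\ref{lem:dominated vertex} once to delete the primitives $(c)a_i$; the Hasse diagram then disconnects as $(\H(P_2)-c)^{\sqcup n}\sqcup\H(P_1)$, so $\M(S_{1,n})\searrow\searrow f(\H(P_2)-c)^{*n}*f(\H(P_1))\simeq (S^0)^{*n}*S^0\simeq S^n$. Your extra round of collapses (removing $(b_i)c$ as well) is valid but unnecessary, and your own count---one $S^0$ from $\{(d)c,(c)d\}$ and one $S^0$ from each pair $\{(a_i)b_i,(b_i)a_i\}$---already gives $(S^0)^{*(n+1)}\simeq S^n$, which is the \emph{right} dimension, not the wrong one. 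The compatibility check you worry about is immediate: after the collapses the surviving primitives live on pairwise disjoint edges of $S_{1,n}$, so any selection of one from each pair is automatically an acyclic matching, and the induced subcomplex is literally the join.

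Your alternative Cluster Lemma route would also succeed, but your description of $\Delta_1$ is garbled in a way that matters. In your labeling the gradient vector field rooted at $c$ is $\{(d)c\}\cup\{(b_i)c\}_i\cup\{(a_i)b_i\}_i$, so a simplex lies outside $\sigma_0$ exactly when it is \emph{incompatible} with every one of these primitives. Incompatibility with $(d)c$ forces the simplex to contain $(c)d$ (not ``either $(d)c$ or $(c)d$''), and incompatibility with $(a_i)b_i$ forces it to contain $(b_i)a_i$ (not $(a_i)b_i$, which belongs to $\sigma_0$). Since $(c)d$ already occupies $c$, nothing further can be added, and $\Delta_1$ consists of a \emph{single} $n$-simplex $\{(c)d,(b_1)a_1,\dots,(b_n)a_n\}$, not two. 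With the trivial matching on $\Delta_1$ and the collapsing matching on $\Delta_0$, Theorem~\ref{thm:Morse theorem} then gives $S^n$ directly---so this route works too, once the arrows are pointed the right way.
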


\begin{figure}[htbp]
  \centering
  \includegraphics[scale=0.10]{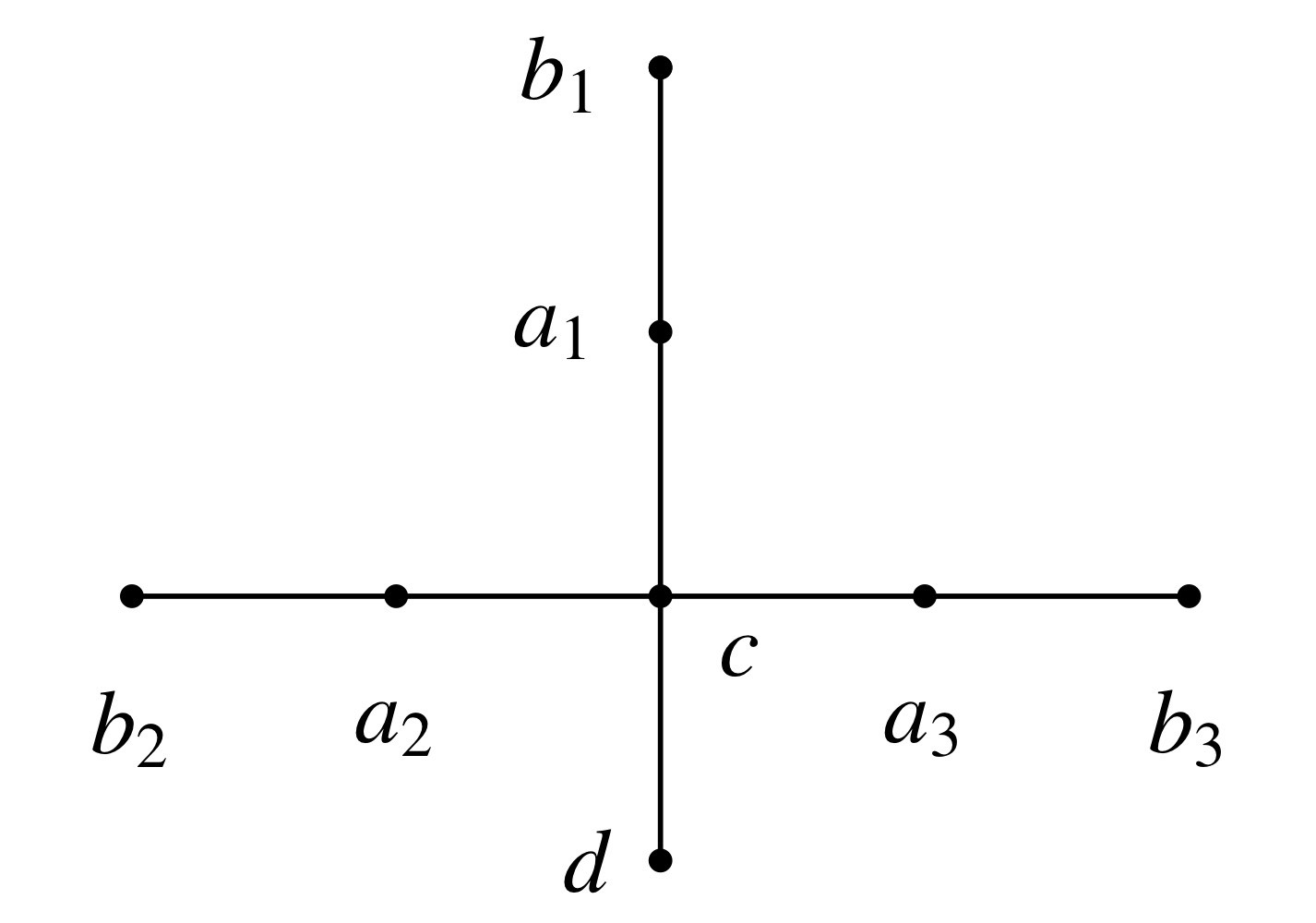}
  \caption{$S_{1, 3}$. By Theorem \ref{thm:3b}, we see that $\M(S_{1, 3})\simeq S^3$.}
  \label{fig:3b}
\end{figure}

\begin{proof}
  Let $c$ be the center of $S_{1, n}$ and let $\{cd, d\}$ be the leaf of length 1 and $\{a_ib_i, b_i\}$ be the leaves of length 2, $i=1, 2,\dots, n$.
  By Lemma \ref{lem:dominated vertex}, $(d)c$ dominates $(c)a_i$.
  In the corresponding Hasse diagram $\H(S_{1, n})$, this corresponds to the removal of the edge between $c$ and $ca_i$.
  Since these are all the edges connected to $c$ in $S_{0, n}$, the Hasse diagram consists of the Hasse diagram of the leaf $ca$, together with $n$ copies of the Hasse diagram of $P_2$, obtained by connecting $c$, $a_i$ and $b_i$ in this order, with $c$ removed.
  The entire Hasse diagram is $(\H(P_2)-c)^{\sqcup n}\sqcup\H(P_1)$ (see Figure \ref{fig:Hasse diagram1}).
  Therefore,
  \begin{equation}
    \begin{split}
      \M(S_{1, n})&\searrow\searrow f((\H(P_2)-c)^{\sqcup n}\sqcup\H(P_1))\\
      &\simeq f(\H(P_2)-c)^{*n}*f(\H(P_1))\\
      &\simeq (S^0)^{*n}*S^0\\
      &\simeq S^n.
    \end{split}
  \end{equation}
\end{proof}

\begin{figure}[htbp]
  \centering
  \includegraphics[scale=0.10]{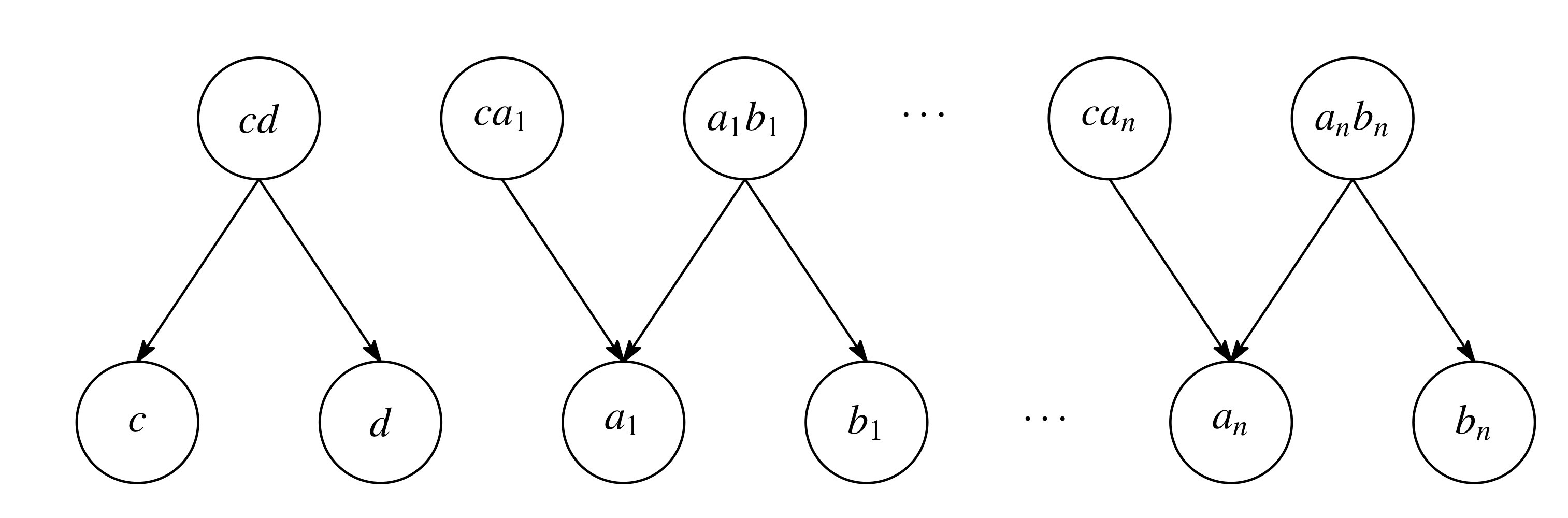}
  \caption{The Hasse diagram $\H(S_{1, n})$ after removing the edge between $c$ and $ca_i$ ($i=1, 2,\dots, n$).}
  \label{fig:Hasse diagram}
\end{figure}

By Proposition \ref{prop:strong collapsibility}, $M(S_{n, m})$ is strongly collapsible if $n\geq2$.
Moreover, if the extended star graph has an extended leaf of length at least 3, the Morse complex can be determined by applying Lemma \ref{lem:suspension}, Theorem \ref{thm:3a} and Theorem \ref{thm:3b}.

\section{main theorem}

Let $P_t$ be the path on $t+1$ vertices, and let $S_{m, n}$ and $S_{k, l}$ be the extended star graphs.
We denoted by $P_t\vee_{v_0} S_{m, n}\vee_{v_t} S_{k, l}$ the graph obtained by attaching the center vertex of each extended star graph to a different endpoint of $P_t$, namely $v_0$ and $v_t$, respectively.
By Proposition \ref{prop:strong collapsibility}, $\M(P_t\vee S_{m, n}\vee S_{k, l})$ is strongly collapsible if $m\geq2$ or $k\geq2$.

\begin{figure}[htbp]
  \centering
  \includegraphics[scale=0.12]{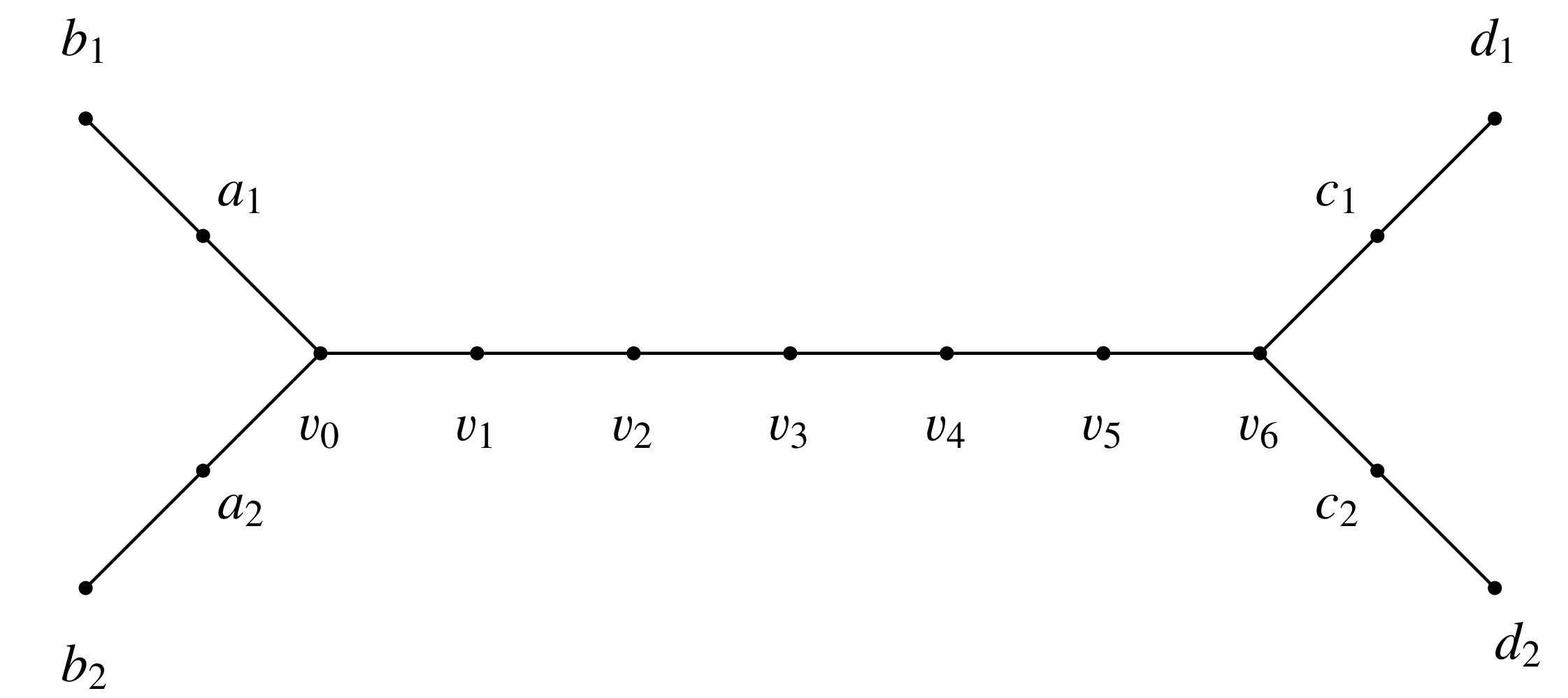}
  \caption{$P_6\vee S_{0, 2}\vee S_{0,2}$. By Theorem \ref{thm:main theorem}, we see that $\M(P_6\vee S_{0, 2}\vee S_{0, 2})\simeq S^8\vee S^8\vee S^8.$}
  \label{fig:my_label}
\end{figure}

\begin{thm}\label{thm:main theorem}
  For any $t, n, l$, we have
  \begin{equation}
    \M(P_t\vee S_{0, n}\vee S_{0, l})\simeq
    \begin{cases}
      (S^{n+l+2u})^{\vee(n+l-1)} &(t=3u)\\
      (S^{n+l+2u+1})^{\vee(nl-1)} &(t=3u+1)\\
      (S^{n+l+2u+2})^{\vee\{(n-1)(l-1)-1\}} &(t=3u+2)
    \end{cases}
  \end{equation}
\end{thm}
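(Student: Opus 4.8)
The plan is to mimic the strategy of Theorem~\ref{thm:3a}: decompose $\M(P_t\vee S_{0,n}\vee S_{0,l})$ via the Cluster Lemma, using a star cluster as the bottom piece of the poset and analyzing what remains critical. Write $c$ for the center of $S_{0,n}$ (attached at $v_0$) with extended leaves $\{a_ib_i,b_i\}$ for $i=1,\dots,n$, and $c'$ for the center of $S_{0,l}$ (attached at $v_t$) with extended leaves $\{a'_jb'_j,b'_j\}$ for $j=1,\dots,l$; the path contributes interior vertices $v_0,v_1,\dots,v_t$. Since the graph is a tree, $\M(P_t\vee S_{0,n}\vee S_{0,l})$ is a flag complex by Lemma~\ref{lem:flag complex}, so star clusters of simplices are collapsible by Proposition~\ref{prop:collapsible}. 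I would first handle the case $t=3u$ by repeatedly applying Lemma~\ref{lem:suspension}: the path $P_{3u}$ can be absorbed as $K\vee_{v}P_{3u}$ where $K=S_{0,n}\vee_{v_t}S_{0,l}$ meets $P_{3u}$ at $v_t$ (equivalently, peel off the path from one side), giving $\M\simeq\Sigma^{2u}\M(S_{0,n}\vee S_{0,l})$; but $S_{0,n}\vee S_{0,l}$ is itself $S_{0,n+l}$ glued at a shared center only if the centers coincide — in our graph $c$ and $c'$ are distinct and joined by $P_0$, so instead I'd treat $t=3u$ by using $P_{3u}=P_0\vee P_{3u}$ type bookkeeping and reduce to the $t=0$ base case $\M(S_{0,n}\vee_{v}S_{0,l})$ where $v$ is a single identified vertex; that base complex is exactly $\M(S_{0,n+l})$, which by Theorem~\ref{thm:3a} is $(S^{n+l})^{\vee(n+l-1)}$, and $2u$ suspensions give the claimed $(S^{n+l+2u})^{\vee(n+l-1)}$.

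For the cases $t=3u+1$ and $t=3u+2$ the residue of the path is not a multiple of $3$, so Lemma~\ref{lem:suspension} alone does not finish the job and a direct Cluster-Lemma computation is needed. Here I would strip off the maximal $3u$-portion of the path with $u$-fold Lemma~\ref{lem:suspension} reductions, leaving $\M$ as a $2u$-fold suspension of $\M(P_r\vee S_{0,n}\vee S_{0,l})$ with $r\in\{1,2\}$. For $r=1$: the graph is $S_{0,n}$ and $S_{0,l}$ joined by a single edge $v_0v_1$. I would take $\sigma_0$ to be the star cluster of the gradient vector field rooted at the midpoint-edge region, set $\Delta_0=\sigma_0$ (collapsible) and $\Delta_1=\M-\sigma_0$, and enumerate the facets of $\Delta_1$: these will be determined by a choice, for each $i$, of either $(a_i)b_i$ or $(b_i)a_i$-type pair together with at most one arrow into $c$, and similarly on the $c'$-side, with the edge $v_0v_1$ forcing a constraint linking the two sides. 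The count of unmatched top simplices should come out to $nl-1$ of dimension $n+l+1$; Theorem~\ref{thm:Morse theorem} then gives $(S^{n+l+2u+1})^{\vee(nl-1)}$ after the suspensions. The $r=2$ case is analogous with the extra middle vertex $v_1$ contributing one more dimension and changing the multiplicative count of free choices to $(n-1)(l-1)$, yielding $(n-1)(l-1)-1$ spheres of dimension $n+l+2u+2$.

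The main obstacle will be the careful combinatorial enumeration of the facets of $\Delta_1$ and the verification that the proposed matching on $\Delta_1$ is acyclic with exactly the claimed number of critical top-dimensional simplices — in particular, getting the interaction between the two star-graph "halves" and the connecting path right, since the path vertices $v_0$ (resp. $v_t$) each simultaneously serve as a center of one star graph and an endpoint of the path, so the gradient vector fields that point arrows along $v_0v_1$ must be reconciled with those pointing into the $a_i$'s. I expect that, as in Theorem~\ref{thm:3a}, once the star cluster $\sigma_0$ is chosen so that $\Delta_1$ consists only of gradient vector fields containing the full "leaf block" $\bigcup_i\{(a_i)b_i\}\cup\bigcup_j\{(a'_j)b'_j\}$ together with a small controlled set of additional arrows, the matching on $\Delta_1$ is essentially forced and acyclicity is immediate; the residue-arithmetic in $t$ and the resulting dimension shifts then follow from Lemma~\ref{lem:suspension}. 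A secondary check is that the degenerate small cases (e.g. $n=1$ or $l=1$, where some wedge summands vanish) are consistent with the stated formulas, which I would note but not belabor.
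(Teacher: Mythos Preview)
Your proposed reduction via Lemma~\ref{lem:suspension} does not go through, and this is a genuine gap rather than a detail to be filled in. The lemma $\M(K\vee_v P_{3t})\simeq\Sigma^{2t}\M(K)$ applies only when $P_{3t}$ is attached to $K$ at \emph{one} endpoint, the other endpoint remaining a leaf of the total graph. In $P_t\vee S_{0,n}\vee S_{0,l}$ both endpoints $v_0,v_t$ of the path are centers of star graphs and hence have degree $n+1$ and $l+1$ respectively; there is no free endpoint anywhere along $P_t$. Nor can you peel a $P_3$ off one of the extended leaves: the path $b_i\,a_i\,v_0\,\cdots$ branches at $v_0$ as soon as $n\ge2$. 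So the step ``strip off the maximal $3u$-portion of the path with $u$-fold Lemma~\ref{lem:suspension} reductions'' is simply not available, and neither the $t=3u$ case nor the reduction of the other two cases to $r\in\{1,2\}$ is justified. (Your formula is of course consistent with a $\Sigma^2$-periodicity in $t$, but that periodicity is a \emph{consequence} of the theorem, not an input you can invoke.)

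The paper avoids this problem by \emph{not} attempting any reduction in $t$: it runs the Cluster Lemma argument directly for arbitrary $t$. The star cluster $\sigma_0$ is taken of the gradient vector field rooted at a single vertex chosen according to $t\bmod 3$, and the complement $\Delta_1$ is analyzed by introducing explicit families $R,L,B_j,C_j$ of $2u$- and $(2u{-}1)$-element subsets encoding how the gradient arrows sit along the length-$t$ path. The nontrivial combinatorics---and the place where the three residue classes diverge---is in the matching on $\Delta_1$ that pairs the $B_j$-type configurations with the $C_j$-type ones (equations \eqref{eq:matching 3u}, \eqref{eq:matching 3u+1}, \eqref{eq:matching 3u+2} in the paper). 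Your sketch for $r=1,2$ gestures at the right kind of enumeration but, because it is built on an illegitimate reduction, it never confronts the interaction between the path and the two star graphs that actually produces the counts $n+l-1$, $nl-1$, $(n-1)(l-1)-1$.
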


\begin{proof}
  Let $\{a_ib_i, b_i\}$ and $\{c_sd_s, d_s\}$ be the leaves of extended leaf of length 2 in $S_{0, n}$ and $S_{0, l}$, respectively ($i=1, 2,\dots, n, s=1, 2,\dots, l$).
  We label the vertices of $P_t$ as $V(P_t)=\{\dots, v_{3u-2}v_{3u-1}, v_{3u-1}, v_{3u}\}$.
  Then, $E(P_t)=\{\dots, v_{3u-1}v_{3u}\}$.

  Let $\sigma_0$ be the star cluster of the gradient vector field rooted in $v_0$ if $t=3u$, in $v_{-1}$ if $t=3u+1$ and in $v_{-2}$ if $t=3u+2$ (see Figure \ref{fig:rooted}).

  \begin{figure}[htbp]
    \centering
    \includegraphics[scale=0.11]{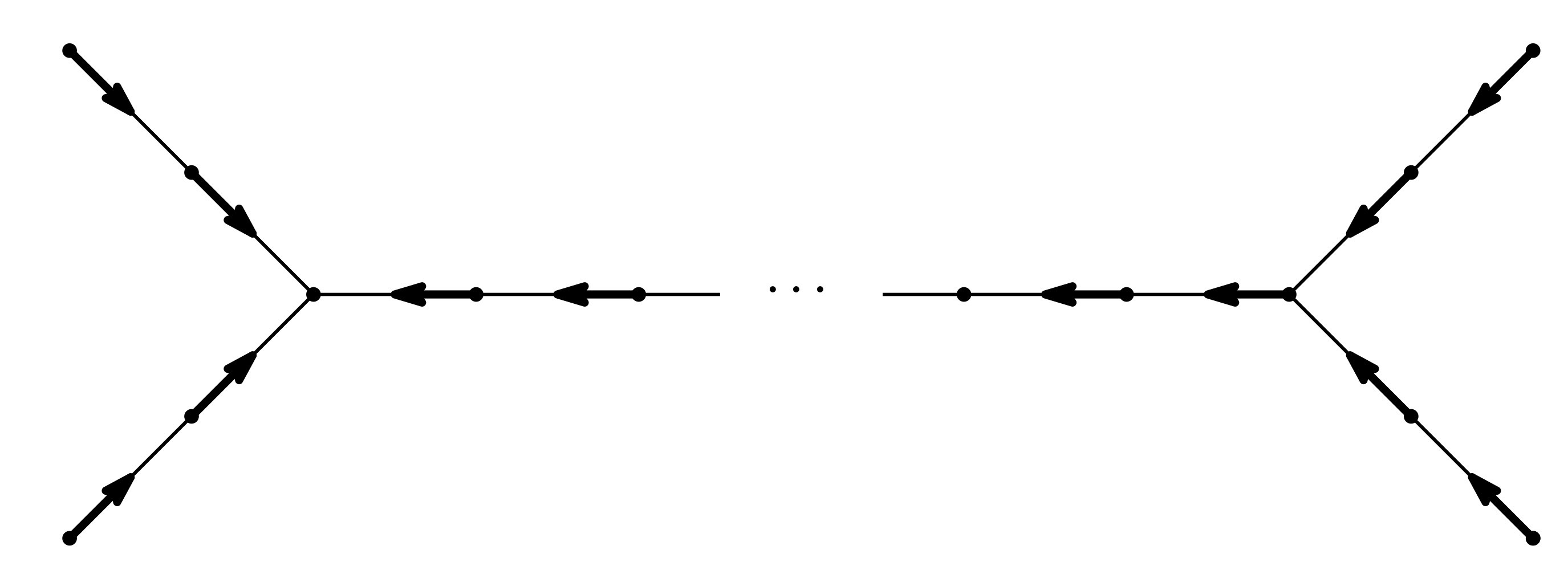}
    \caption{$\sigma_0$ is the union of the star of each arrow.}
    \label{fig:rooted}
  \end{figure}

  Now define $\Delta_0\coloneqq\sigma_0$ and $\Delta_1\coloneqq\M(P_t\vee S_{0, n}\vee S_{0, l})-\sigma_0$.
  Clearly, $\Delta_0\cup \Delta_1=\M(P_t\vee S_{0, n}\vee S_{0, l})$, so we can apply Lemma \ref{lem: cluster lemma}.
  We define an acyclic matching on $\Delta_0, \Delta_1$ as follows:

  First, $\Delta_0$ is collapsible by Proposition \ref{prop:collapsible} and Lemma \ref{lem:flag complex} so there is an acyclic matching on $\Delta_0$ with a single critical 0-simplex.

  Now consider $\Delta_{1}$.
  Any simplex that does not contain $(a_i)b_i$ is contained in $\st((b_i)a_i)$, and any simplex that does not contain $(c_s)d_s$ is contained in $\st((d_s)c_s)$; that is, they are contained in $\Delta_0$, so every simplex of $\Delta_1$ contains the form 
  \begin{equation}
  V=\left(\bigcup_{i=1}^n\{(a_i)b_i\}\right)\cup\left(\bigcup_{s=1}^l\{(c_s)d_s\}\right).
  \end{equation}
  We match the following two simplices as follow:
  \begin{equation}\label{eq:path matching}
    W\cup\{(v_{3k})v_{3k+1}, (v_{3k+1})v_{3k+2}, (v_{3k+2})v_{3k+3}\} \longleftrightarrow W\cup\{(v_{3k})v_{3k+1}, (v_{3k+2})v_{3k+3}\}\quad (k=0,1,\dots, u-1)
  \end{equation}
  where $W$ is any simplex of $\M(P_t\vee S_{0, n}\vee S_{0, l})$ such that both of these two simplices are contained in $\Delta_1$ (see Figure \ref{fig:matching}).

  \begin{figure}[htbp]
    \centering
    \includegraphics[scale=0.08]{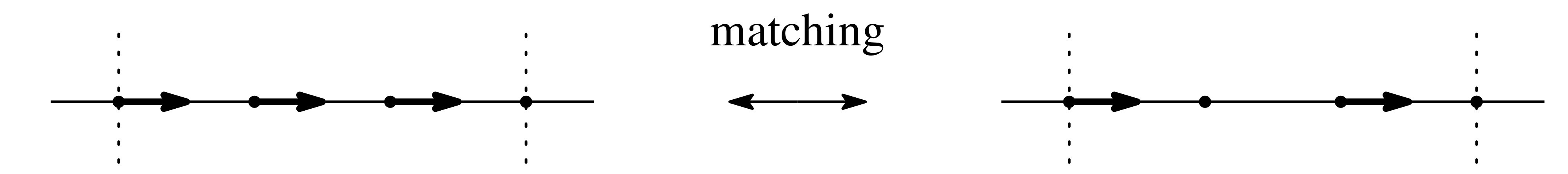}
    \caption{Match $W\cup\{(v_{3k})v_{3k+1}, (v_{3k+1})v_{3k+2}, (v_{3k+2})v_{3k+3}\}$ with $W\cup\{(v_{3k})v_{3k+1}, (v_{3k+2})v_{3k+3}\}$.}
    \label{fig:matching}
  \end{figure}
  Let us define the following collection of simplices:
  \begin{eqnarray}
    &R=\displaystyle\bigcup_{k=0}^{u-1}\{(v_{3k+1})v_{3k+2}, (v_{3k+2})v_{3k+3}\}\\
    &L=\displaystyle\bigcup_{k=0}^{u-1}\{(v_{3k})v_{3k+1}, (v_{3k+1})v_{3k+2}\}\\
    &B_j=\left(\displaystyle\bigcup_{k=0}^{j}\{(v_{3k+1})v_{3k+2}, (v_{3k+2})v_{3k+3}\}\right)\cup\left(\displaystyle\bigcup_{k=j+1}^{u-1}\{(v_{3k})v_{3k+1}, (v_{3k+1})v_{3k+2}\}\right)\quad (-1\leq j\leq u-1)\\
    &C_j=\left(\displaystyle\bigcup_{k=0}^{j-1}\{(v_{3k+1})v_{3k+2}, (v_{3k+2})v_{3k+3}\}\right)\cup\{(v_{3j+1})v_{3j+2}\}\cup\left(\displaystyle\bigcup_{k=j+1}^{u-1}\{(v_{3k})v_{3k+1}, (v_{3k+1})v_{3k+2}\}\right)\quad (0\leq j\leq u-1)
  \end{eqnarray}

  \begin{figure}[htbp]
    \centering
    \includegraphics[scale=0.12]{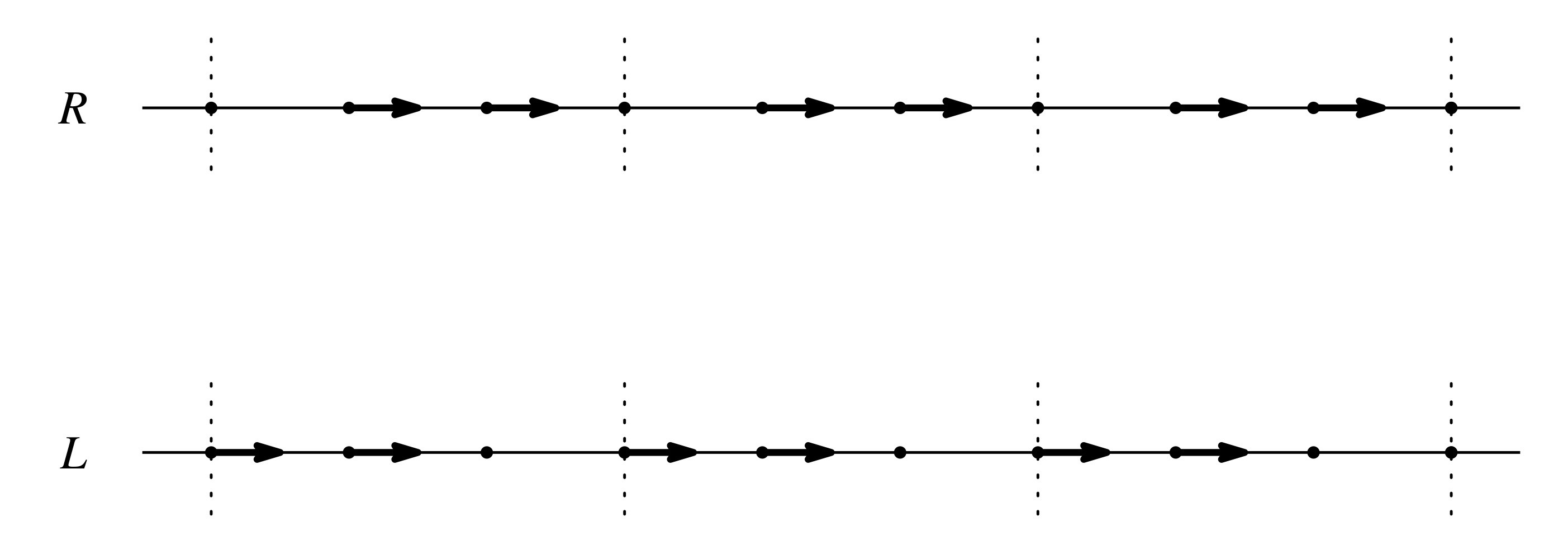}
    \label{fig:RL}
    \caption{}
  \end{figure}

  \begin{figure}[htbp]
    \centering
    \includegraphics[scale=0.125]{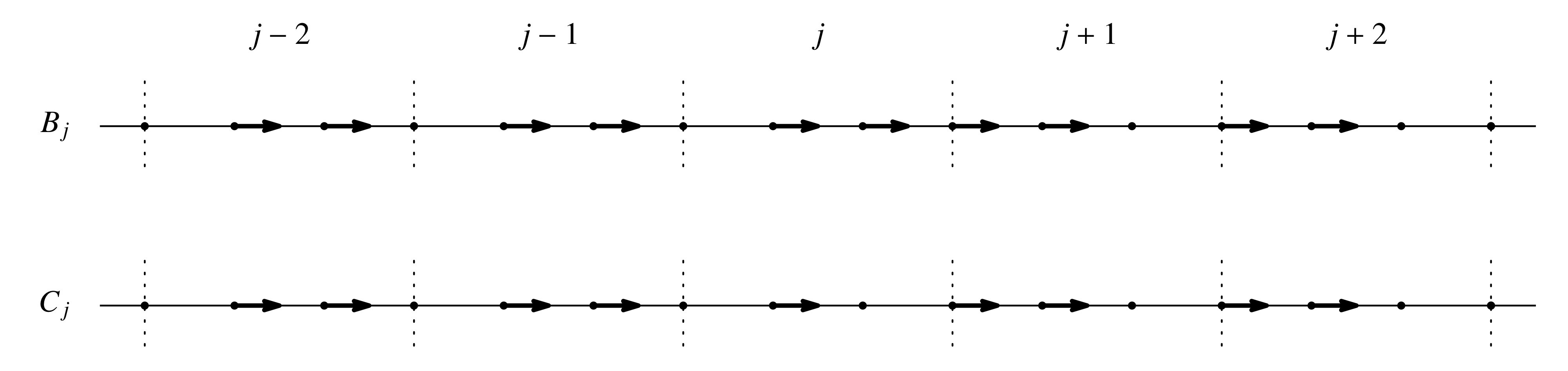}
    \caption{}
  \end{figure}

  In this case, $B_{-1}=L$ and $B_{u-1}=R$ hold.
  For any $m$, any simplex that does not contain $(v_{m-1})v_m$ and $(v_m)v_{m+1}$ is contained in $\st((v_m)v_{m-1})$, that is, in $\Delta_0$.
  Hence, every simplex in $\Delta_1$ must contain at least one of $(v_{m-1})v_m$ or $(v_m)v_{m+1}$.
  It follows that any simplex in $\Delta_1$ that is not matched in \eqref{eq:path matching} must contain exactly one of $R$, $L$, $B_j$, or $C_j$.
  We have three cases:
  \begin{enumerate}
    \item Let $t=3u$. 
    Any simplex that does not contain $(v_{3u-1})v_{3u}$ and $\textcolor{blue}{(v_{3u})c_s}$ is contained in $\st((v_{3u})v_{3u-1})$, that is, in $\Delta_0$. Therefore, any simplex that does not contain \textcolor{blue}{$(v_{3u})c_s$} must contain $(v_{3u-1})v_{3u}$.
    This implies that such a simplex must contain $R$.
    Also, any simplex containing $\textcolor{red}{(v_0)a_i}$ does not contain $(v_0)v_1$, by the definition of the Morse complex, and therefore must contain $(v_1)v_2$.
    This implies that any simplex containing $\textcolor{red}{(v_0)a_i}$ must contain exactly one of $R$, $B_j$, or $C_j$.
    We match simplices as follows: 
    \begin{equation}\label{eq:matching 3u}
      \begin{alignedat}{2}
        V\cup R &\longleftrightarrow V\cup \{\textcolor{red}{(v_0)a_1}\}\cup R & &\\ 
        V\cup\{\textcolor{blue}{(v_{3u})c_s}\}\cup B_j &\longleftrightarrow V\cup\{\textcolor{red}{(v_0)a_1}, \textcolor{blue}{(v_{3u})c_s}\}\cup B_j & \quad&(j=0, 1,\dots, u-1)\\
        V\cup\{\textcolor{blue}{(v_{3u})c_s}\}\cup C_j &\longleftrightarrow V\cup\{\textcolor{red}{(v_0)a_1}, \textcolor{blue}{(v_{3u})c_s}\}\cup C_j & \,&(j=0, 1,\dots, u-1)\\
        V\cup\{\textcolor{red}{(v_0)a_i}, \textcolor{blue}{(v_{3u})c_s}\}\cup B_j &\longleftrightarrow V\cup\{\textcolor{red}{(v_0)a_i}, \textcolor{blue}{(v_{3u})c_s}\}\cup C_j & \,&(i=2, 3,\dots, n, j=0, 1,\dots, u-1)\quad(\mathrm{see\;Figure}\;\ref{fig:4a})
      \end{alignedat}
    \end{equation}

    Combining the simplices matched in \eqref{eq:path matching} and \eqref{eq:matching 3u}, we see that $V\cup \{\textcolor{red}{(v_0)a_i}\}\cup R\,(i=2, 3,\dots, n)$ and $V\cup\{\textcolor{blue}{(v_{3u})c_s}\}\cup L\,(s=1, 2,\dots, l)$ remain unmatched.
    Since the number of elements in $V$ is $n+l$, and the number of elements in $R$ and $L$ is $2u$, the dimension of these simplices is $n+l+2u$.
    Thus, $\Delta_1$ contains $n+l-1$ unmatched simplices of dimension $n+l+2u$.
    By Theorem \ref{thm:Morse theorem}, $\M(P_{3u}\vee S_{0, n}\vee S_{0, l})\simeq (S^{n+l+2u})^{\vee(n+l-1)}$.

    \begin{figure}[htbp]
      \centering
      \includegraphics[scale=0.09]{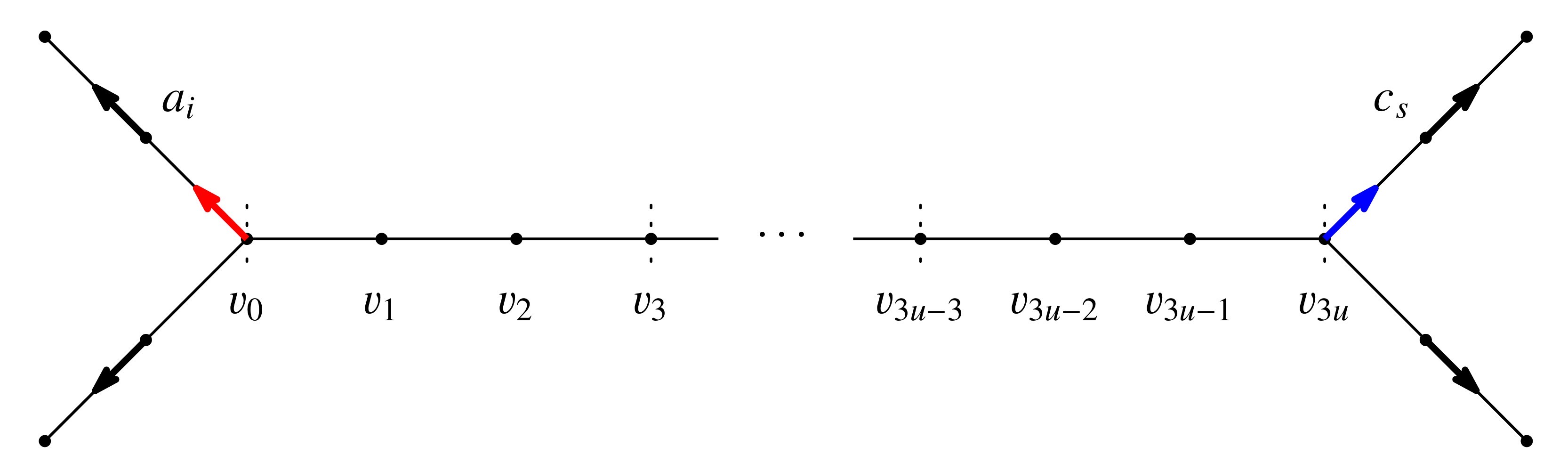}
      \caption{$P_{3u}\vee S_{0, n}\vee S_{0, l}$. The simplex generated by black $\rightarrow$ is $V$. Any simplex not containing \textcolor{blue}{$(v_{3u})c_s$} must contain $(v_{3u-1})v_{3u}$.}
      \label{fig:0mod3}
    \end{figure}

    \item Let $t=3u+1$.
    For the same reason as in (i), any simplex not containing $\textcolor{blue}{(v_{3u})c_s}$ must contain $R$. 
    Also, any simplex that does not contain $\textcolor{green}{(v_{-1})v_0}$ and $(v_0)v_1$ is contained in $\st((v_0)v_{-1})$, that is, in $\Delta_0$. Therefore, any simplex that does not contain $\textcolor{green}{(v_{-1})v_0}$ must contain $(v_0)v_1$.
    This implies that such a simplex must contain exactly one of $L$, $B_j$, or $C_j$. 
    Taking this into account, we see that any simplex that does not contain $\textcolor{green}{(v_{-1})v_0}$ and $\textcolor{blue}{(v_{3u})c_s}$ does not appear in $\Delta_1$, expect for those already matched in \eqref{eq:path matching}.
    We match simplices as follows: 
    \begin{equation}\label{eq:matching 3u+1}
      \begin{alignedat}{2}
        V\cup\{\textcolor{green}{(v_{-1})v_0}\}\cup R &\longleftrightarrow V\cup\{\textcolor{green}{(v_{-1})v_0}, \textcolor{blue}{(v_{3u})c_1}\}\cup R & &\\
        V\cup\{\textcolor{green}{(v_{-1})v_0} ,\textcolor{blue}{(v_{3u})c_1}\}\cup B_j &\longleftrightarrow V\cup\{\textcolor{green}{(v_{-1})v_0} ,\textcolor{blue}{(v_{3u})c_1}\}\cup C_{j+1} & \quad&(j=-1, 0,\dots, u-2)\quad (\mathrm{see\;Figure}\;\ref{fig:4b})\\
        V\cup\{\textcolor{green}{(v_{-1})v_0} ,\textcolor{blue}{(v_{3u})c_s}\}\cup B_j &\longleftrightarrow V\cup\{\textcolor{green}{(v_{-1})v_0},  \textcolor{blue}{(v_{3u})c_s}\}\cup C_j & \,&(s=2, 3,\dots, l,\,j=0, 1,\dots, u-1)\\
        V\cup\{\textcolor{blue}{(v_{3u})c_s}\}\cup L &\longleftrightarrow V\cup\{\textcolor{green}{(v_{-1})v_0} ,\textcolor{blue}{(v_{3u})c_s}\}\cup L & \,&(s=2, 3,\dots, l)\\
        V\cup\{\textcolor{blue}{(v_{3u})c_1}\}\cup L &\longleftrightarrow V\cup\{\textcolor{red}{(v_{-1})a_1}, \textcolor{blue}{(v_{3u})c_1}\}\cup L & &
      \end{alignedat}
    \end{equation}

    Combining the simplices matched in \eqref{eq:path matching} and \eqref{eq:matching 3u+1}, we see that $V\cup\{\textcolor{red}{(v_{-1})a_i}, \textcolor{blue}{(v_{3u})c_s}\}\cup L\,((i, s)\ne(1, 1))$ remains unmatched.
    Since the number of elements in $V$ is $n+l$, and the number of elements in $L$ is $2u$, the dimension of this simplex is $n+l+2u+1$.
    Thus, $\Delta_1$ contains $nl-1$ unmatched simplices of dimension $n+l+2u+1$.
    By Theorem \ref{thm:Morse theorem}, $\M(P_{3u+1}\vee S_{0, n}\vee S_{0, l})\simeq(S^{n+l+2u+1})^{\vee(nl-1)}$.

    \begin{figure}[htbp]
      \centering
      \includegraphics[scale=0.09]{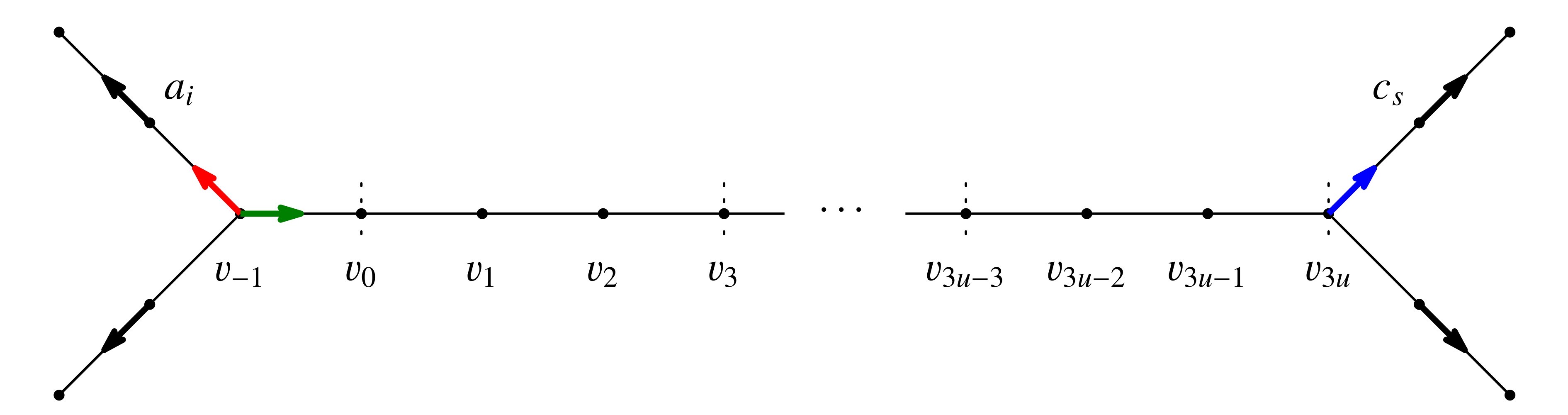}
      \caption{$P_{3u+1}\vee S_{0, n}\vee S_{0, l}$. The simplex generated by black $\rightarrow$ is $V$. Any simplex not containing \textcolor{blue}{$(v_{3u})c_s$} must contain $(v_{3u-1})v_{3u}$ and any simplex not containing \textcolor{green}{$(v_{-1})v_0$} must contain $(v_0)v_1$.}
      \label{1mod3}
    \end{figure}

    \item Let $t=3u+2$.
    For the same reasons as in (i) and (ii), any simplex not containing $\textcolor{blue}{(v_{3u})c_s}$ must contain $R$ and any simplex not containing $\textcolor{green}{(v_{-1})v_0}$ must contain exactly one of $L$, $B_j$, or $C_j$.
    Taking this into account, we see that any simplex that does not contain $\textcolor{green}{(v_{-1})v_0}$ and $\textcolor{blue}{(v_{3u})c_s}$ does not appear in $\Delta_1$, expect for those already matched in \eqref{eq:path matching}.
    Also, any simplex that does not contain $\textcolor{purple}{(v_{-2})v_{-1}}$ and $\textcolor{green}{(v_{-1})v_0}$ is contained in $\st((v_{-1})v_{-2})$, that is, in $\Delta_0$.
    Therefore, any simplex that does not contain $\textcolor{purple}{(v_{-2})v_{-1}}$ must contain $\textcolor{green}{(v_{-1})v_0}$.
    We match simplices as follows:

    \begin{equation}\label{eq:matching 3u+2}
      \begin{alignedat}{2}
        V\cup\{\textcolor{green}{(v_{-1})v_0}\}\cup R &\longleftrightarrow V\cup\{\textcolor{purple}{(v_{-2})v_{-1}}, \textcolor{green}{(v_{-1})v_0}\}\cup R & &\\
        V\cup\{\textcolor{purple}{(v_{-2})v_{-1}}, \textcolor{green}{(v_{-1})v_0}, \textcolor{blue}{(v_{3u})c_s}\}\cup B_j &\longleftrightarrow V\cup\{\textcolor{purple}{(v_{-2})v_{-1}}, \textcolor{green}{(v_{-1})v_0}, \textcolor{blue}{(v_{3u})c_s}\}\cup C_j & \,&(j=0, 1,\dots, u-1)\\
        V\cup \{\textcolor{green}{(v_{-1})v_0}, \textcolor{blue}{(v_{3u})c_s}\}\cup B_j &\longleftrightarrow V\cup \{\textcolor{green}{(v_{-1})v_0}, \textcolor{blue}{(v_{3u})c_s}\}\cup C_{j+1} & \,&(j=-1, 0,\dots, u-2)\\
        V\cup\{\textcolor{red}{(v_{-2})a_i}, \textcolor{green}{(v_{-1})v_0}, \textcolor{blue}{(v_{3u})c_s}\}\cup B_j &\longleftrightarrow V\cup\{\textcolor{red}{(v_{-2})a_i}, \textcolor{green}{(v_{-1})v_0}, \textcolor{blue}{(v_{3u})c_s}\}\cup C_{j+1} & \quad&(j=-1, 0,\dots, u-2)\\
        V\cup\{\textcolor{red}{(v_{-2})a_i}, \textcolor{green}{(v_{-1})v_0}\}\cup R &\longleftrightarrow V\cup\{\textcolor{red}{(v_{-2})a_i}, \textcolor{green}{(v_{-1})v_0}, \textcolor{blue}{(v_{3u})c_1}\}\cup R & \,&(i=1, 2,\dots, n-1)\\
        V\cup\{\textcolor{green}{(v_{-1})v_0}, \textcolor{blue}{(v_{3u})c_s}\}\cup R &\longleftrightarrow  V\cup\{\textcolor{red}{(v_{-2})a_1}, \textcolor{green}{(v_{-1})v_0}, \textcolor{blue}{(v_{3u})c_s}\}\cup R & \,&(s=2, 3,\dots, l)\\
        V\cup\{\textcolor{red}{(v_{-2})a_n}, \textcolor{green}{(v_{-1})v_0}\}\cup R &\longleftrightarrow V\cup\{\textcolor{red}{(v_{-2})a_n}, \textcolor{green}{(v_{-1})v_0}, \textcolor{blue}{(v_{3u})c_l}\}\cup R & &\\
        V\cup\{\textcolor{green}{(v_{-1})v_0}, \textcolor{blue}{(v_{3u})c_1}\}\cup R &\longleftrightarrow V\cup\{\textcolor{red}{(v_{-2})a_n}, \textcolor{green}{(v_{-1})v_0}, \textcolor{blue}{(v_{3u})c_1}\}\cup R & &\\
        V\cup\{\textcolor{purple}{(v_{-2})v_{-1}}, \textcolor{blue}{(v_{3u})c_s}\}\cup L &\longleftrightarrow V\cup\{\textcolor{purple}{(v_{-2})v_{-1}}, \textcolor{green}{(v_{-1})v_0}, \textcolor{blue}{(v_{3u})c_s}\}\cup L & &\\
      \end{alignedat}
    \end{equation}

    Combining the simplices matched in \eqref{eq:path matching} and \eqref{eq:matching 3u+2}, we see that $V\cup\{\textcolor{red}{(v_{-2})a_i}, \textcolor{green}{(v_{-1})v_0}, \textcolor{blue}{(v_{3u})c_s}\}\cup R$\,($i\ne1$, $s\ne1$ and $(i, s)\ne(n, l)$) remains unmatched.
    Since the number of elements in $V$ is $n+l$, and the number of elements in $R$ is $2u$, the dimension of this simplex is $n+l+2u+2$.
    Thus, $\Delta_1$ contains $nl-n-l=(n-1)(l-1)-1$ unmatched simplices of dimension $n+l+2u+2$.
    By Theorem \ref{thm:Morse theorem}, $\M(P_{3u+2}\vee S_{0, n}\vee S_{0, l})\simeq (S^{n+l+2u+2})^{\vee\{(n-1)(l-1)-1\}}$.

    \begin{figure}[htbp]
      \centering
      \includegraphics[scale=0.09]{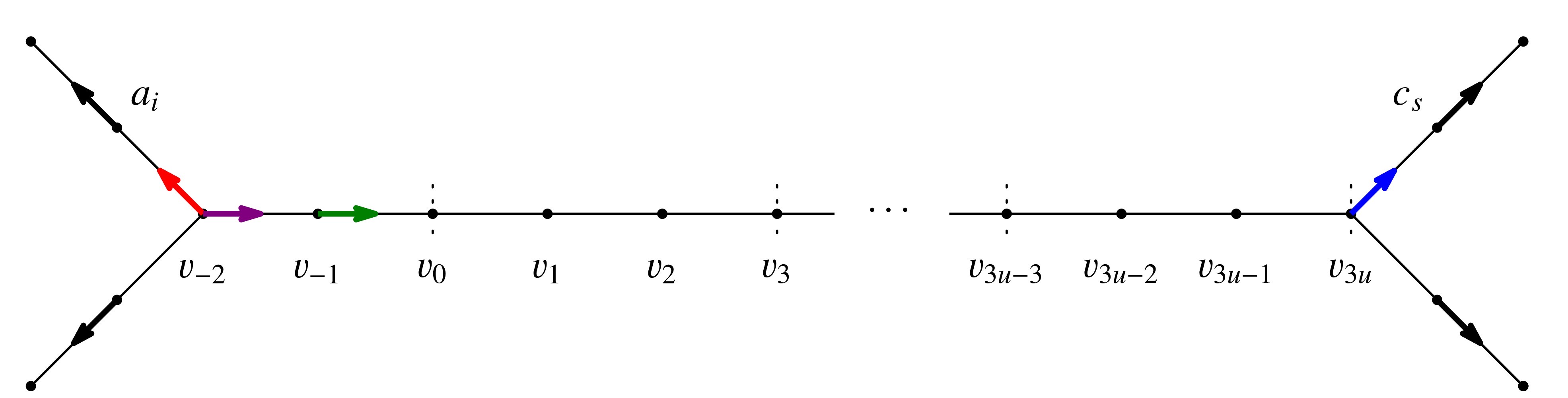}
      \caption{$P_{3u+2}\vee S_{0, n}\vee S_{0, l}$. The simplex generated by black $\rightarrow$ is $V$. Any simplex not containing \textcolor{blue}{$(v_{3u})c_s$} must contain $(v_{3u-1})v_{3u}$, any simplex not containing \textcolor{purple}{$(v_{-2})v_{-1}$} must contain \textcolor{green}{$(v_{-1})v_0$} and any simplex not containing \textcolor{green}{$(v_{-1})v_0$} must contain $(v_0)v_1$.}
      \label{fig:2mod3}
    \end{figure}
  \end{enumerate}

\end{proof}

\begin{figure}[htbp]
      \centering
      \includegraphics[scale=0.15]{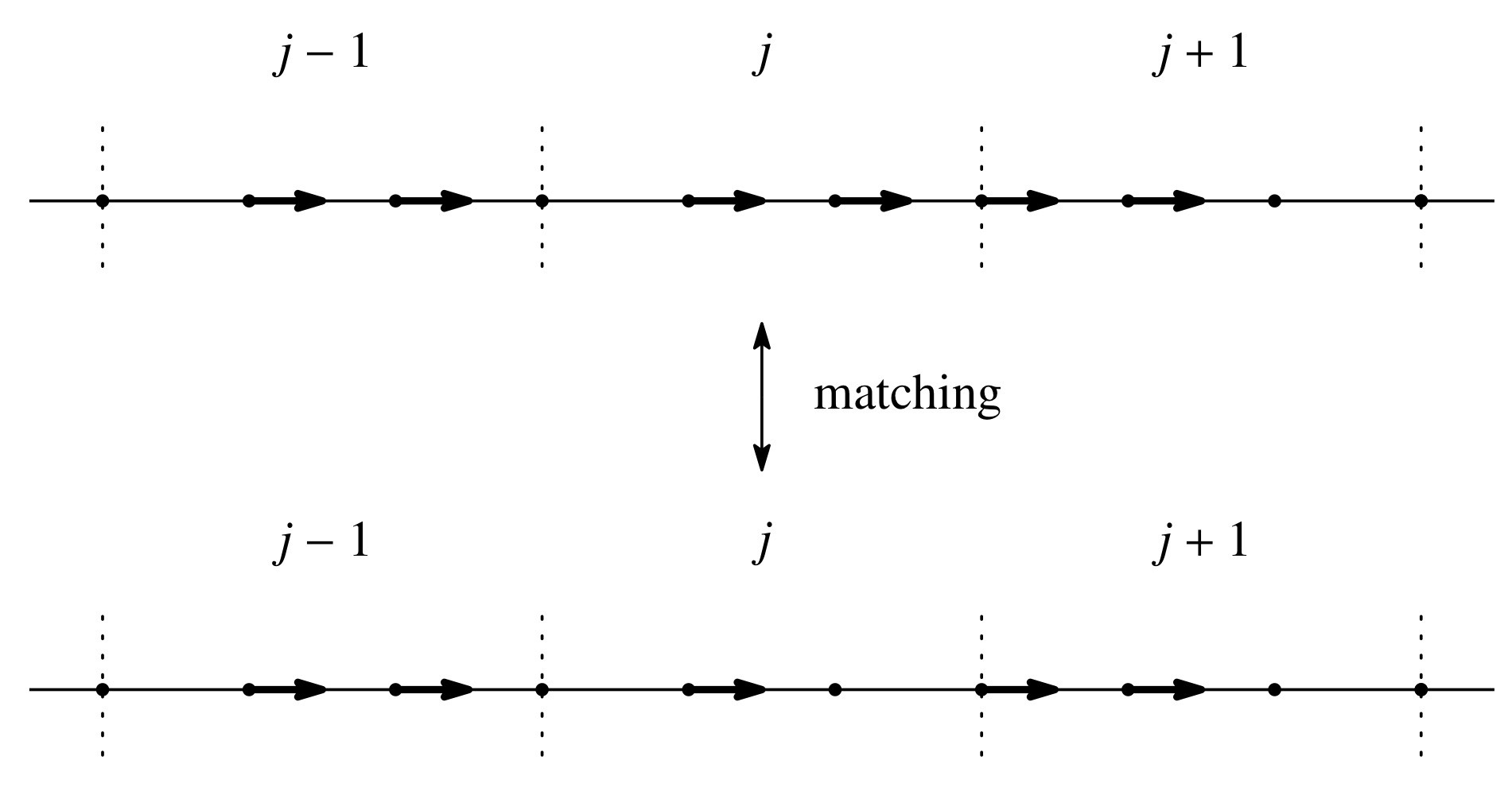}
      \caption{Match $B_j$ with $C_j\,(j=0, 1,\dots, u-1)$.}
      \label{fig:4a}
\end{figure}

\begin{figure}[htbp]
      \centering
      \includegraphics[scale=0.15]{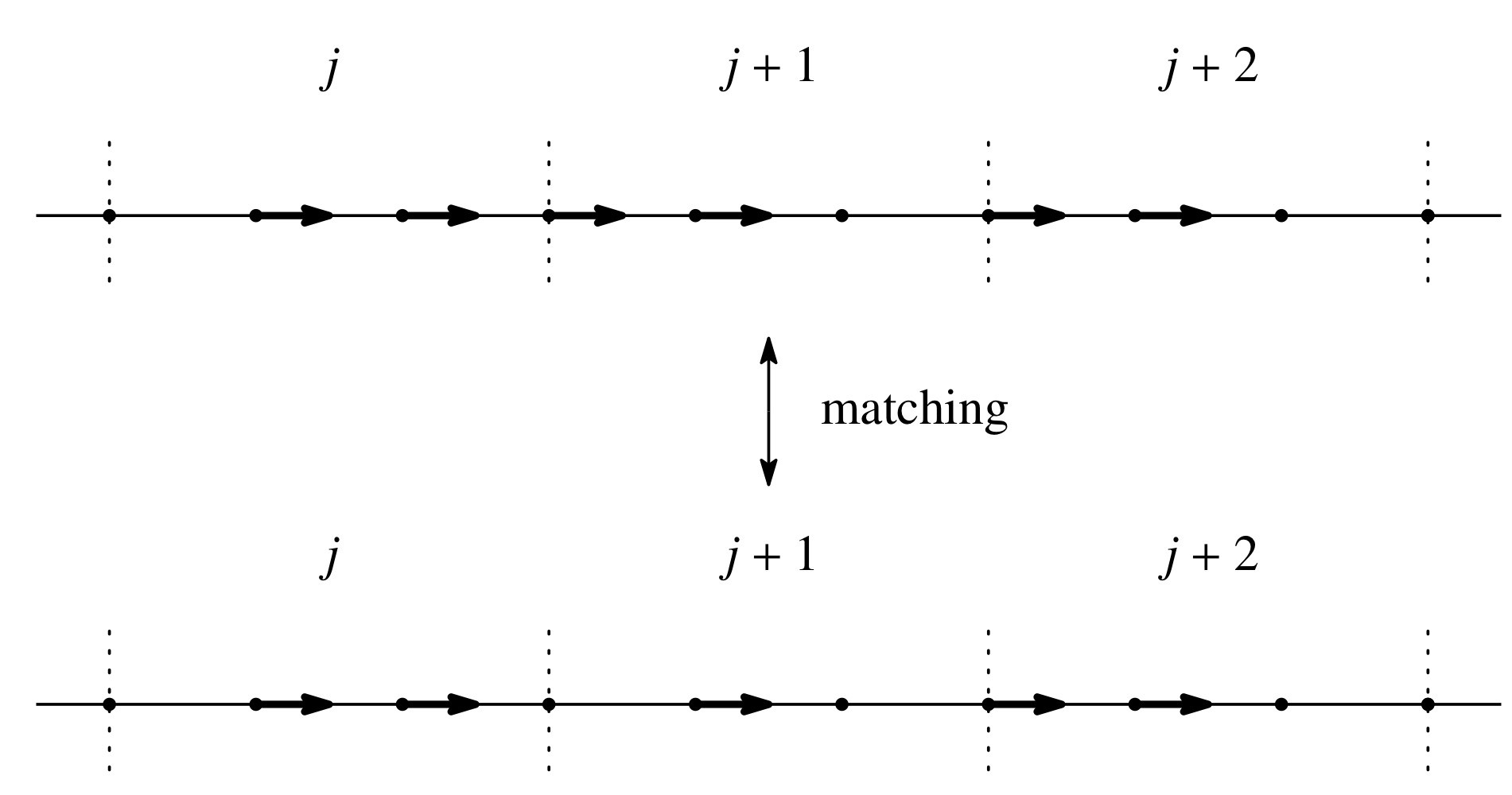}
      \caption{Match $B_j$ with $C_{j+1}\,(j=-1, 0,\dots, u-2)$.}
      \label{fig:4b}
\end{figure}

\begin{thm}\label{thm:4b}
  For any $t, n, l$, we have
  \begin{equation}
    \M(P_t\vee S_{1, n}\vee S_{1, l})\simeq
    \begin{cases}
      * &(t=3u)\\
      S^{n+l+2u+1} &(t=3u+1)\\
      S^{n+l+2u+2} &(t=3u+2)
    \end{cases}
  \end{equation}
\end{thm}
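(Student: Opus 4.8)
The plan is to imitate the proof of Theorem~\ref{thm:3b}: use the two length-$1$ leaves to strong collapse $\M(P_t\vee S_{1,n}\vee S_{1,l})$, translate everything into the Hasse diagram, and recognize the collapsed complex as a join built out of the Morse complex of a path, whose homotopy type is classically known.

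Concretely, let $v_0,\dots,v_t$ be the vertices of $P_t$, let $\{v_0d,d\}$ and $\{v_te,e\}$ be the length-$1$ leaves of $S_{1,n}$ and $S_{1,l}$, and let $\{a_ib_i,b_i\}$ $(i=1,\dots,n)$ and $\{c_sd_s,d_s\}$ $(s=1,\dots,l)$ be the extended leaves of length $2$, with $a_i$ adjacent to $v_0$ and $c_s$ adjacent to $v_t$. Applying Lemma~\ref{lem:dominated vertex} to the leaf $\{v_0d,d\}$ shows that $(v_0)a_i$ is dominated in $\M(P_t\vee S_{1,n}\vee S_{1,l})$ by $(d)v_0$ for every $i$, and symmetrically $(v_t)c_s$ is dominated by $(e)v_t$ for every $s$. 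Since domination by a fixed vertex persists under deleting other vertices, these elementary strong collapses can be performed in succession, giving
\[
  \M(P_t\vee S_{1,n}\vee S_{1,l})\searrow\searrow \M(P_t\vee S_{1,n}\vee S_{1,l})-\{(v_0)a_1,\dots,(v_0)a_n,(v_t)c_1,\dots,(v_t)c_l\}.
\]
In contrast with Theorem~\ref{thm:3b}, I would deliberately leave $(v_0)v_1$ and $(v_t)v_{t-1}$ untouched, so that $v_0$ and $v_t$ remain joined to the interior of the path.

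Passing to Hasse diagrams via Remark~\ref{rem:Morse cplx} and the identification $\M(K)\simeq f(\H(K))$, deleting the Morse vertex $(v_0)a_i$ amounts to deleting the Hasse edge between $v_0$ and $v_0a_i$, and deleting $(v_t)c_s$ deletes the edge between $v_t$ and $v_tc_s$. After all these deletions the Hasse diagram $\H(P_t\vee S_{1,n}\vee S_{1,l})$ decomposes into $n+l$ copies of $\H(P_2)-c$ (one per detached extended leaf $a_ib_i$ or $c_sd_s$) together with the component of $v_0$, which is exactly $\H(P_{t+2})$, the Hasse diagram of the path $d-v_0-v_1-\cdots-v_t-e$, since the leaf edges $v_0d$ and $v_te$ are still present. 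Using that $f$ sends disjoint unions of posets to joins and that $f(\H(P_2)-c)\simeq S^0$ (as in the proof of Theorem~\ref{thm:3b}), this yields
\[
  \M(P_t\vee S_{1,n}\vee S_{1,l})\;\simeq\;\M(P_{t+2})*(S^0)^{*(n+l)}\;\simeq\;\Sigma^{n+l}\M(P_{t+2}).
\]

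Finally I would evaluate $\M(P_{t+2})$ using Lemma~\ref{lem:suspension} and the base cases $\M(P_1)\simeq S^0$, $\M(P_2)\simeq *$, $\M(P_3)\simeq S^1$ (which follow from Kozlov's computation \cite{Koz99}, or a short direct check). For $t=3u$, $P_{t+2}=P_{3u+2}$ is $P_2$ with a path of length $3u$ attached at an endpoint, so $\M(P_{3u+2})\simeq\Sigma^{2u}\M(P_2)\simeq *$; for $t=3u+1$, $P_{t+2}=P_{3u+3}$ is $P_3$ with a path of length $3u$ attached, so $\M(P_{3u+3})\simeq\Sigma^{2u}\M(P_3)\simeq S^{2u+1}$; for $t=3u+2$, $P_{t+2}=P_{3u+4}$ is $P_1$ with a path of length $3(u+1)$ attached, so $\M(P_{3u+4})\simeq\Sigma^{2u+2}\M(P_1)\simeq S^{2u+2}$ (the $u=0$ instances being the base cases themselves). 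Substituting into $\Sigma^{n+l}\M(P_{t+2})$ gives $*$, $S^{n+l+2u+1}$ and $S^{n+l+2u+2}$ respectively, as claimed. I expect the main obstacle to be the bookkeeping in the middle step: verifying that precisely the listed Hasse edges disappear and, in particular, that the central component is genuinely $\H(P_{t+2})$ rather than a truncated path; everything else reduces to the three path base cases together with Lemma~\ref{lem:suspension}.
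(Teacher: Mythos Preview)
Your proof is correct and follows essentially the same strategy as the paper: use Lemma~\ref{lem:dominated vertex} at the two length-$1$ leaves to perform strong collapses, reinterpret this as deleting edges in the Hasse diagram, decompose into a join, and finish with Kozlov's computation for paths. The only difference is a bookkeeping choice: the paper also collapses $(v_0)v_1$ and $(v_t)v_{t-1}$, so that the central component becomes $\H(P_t)-\{v_0,v_t\}$ (which they identify with $\H(P_{t-1})$), yielding $\Sigma^{n+l+2}\M(P_{t-1})$; you instead keep those two edges, so the central component is genuinely $\H(P_{t+2})$ and you obtain $\Sigma^{n+l}\M(P_{t+2})$. Your variant is arguably cleaner, since it avoids having to argue that $f(\H(P_t)-\{v_0,v_t\})\simeq f(\H(P_{t-1}))$ (which requires noting either that the matching complex depends only on the underlying bipartite graph, or that the two posets are dual), and it directly invokes Proposition~\ref{prop:Kozlov} on an honest path.
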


\begin{figure}[htbp]
  \centering
  \includegraphics[scale=0.13]{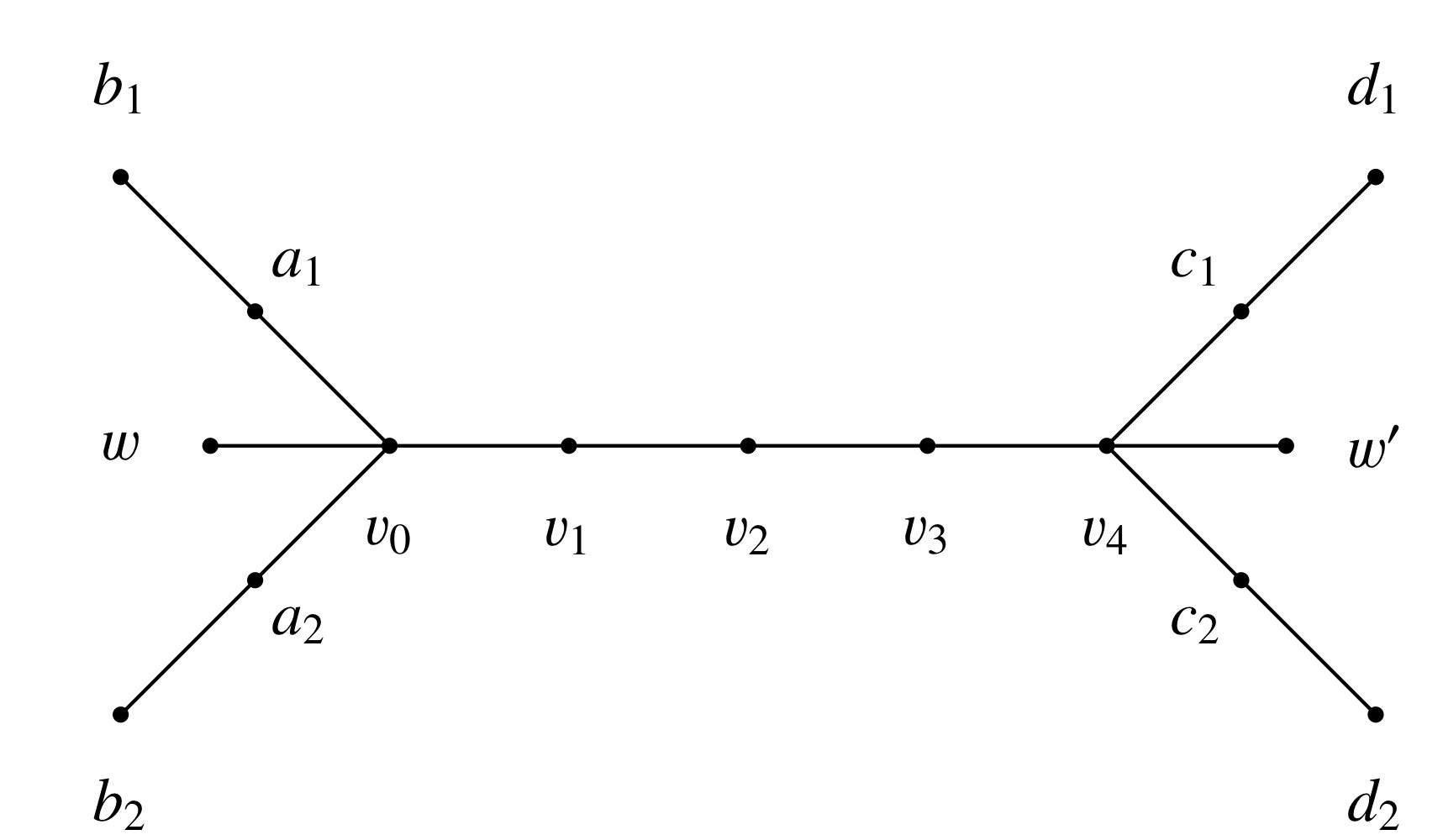}
  \caption{$P_4\vee S_{1, 2}\vee S_{1, 2}$. By Theorem \ref{thm:4b}, we see that $\M(P_4\vee S_{1, 2}\vee S_{1, 2})\simeq S^7$.}
  \label{fig:thm4b}
\end{figure}

\begin{proof}
  We label the vertices of $P_t$ as $V(P_t)=\{v_0, v_1,\dots, v_t\}$.
  Let the leaves of the extended leaves of length 2 in $S_{1, n}$ and $S_{1, l}$ be $\{a_ib_i, b_i\}$ and $\{c_sd_s, d_s\}$ respectively ($i=1, 2,\dots, n, s=1, 2,\dots, l$).
  Also, let the leaves of $S_{1, n}$ and $S_{1, l}$ be $\{v_0w, w\}$ and $\{v_tw^\prime, w^\prime\}$ respectively.
  By Lemma \ref{lem:dominated vertex}, $(w)v_0$ dominates $(v_0)a_i$ and $(v_0)v_1$, and $(w^\prime)v_t$ dominates $(v_t)c_s$ and $(v_t)v_{t-1}$.
  In the corresponding Hasse diagram $\H(P_t\vee S_{1, n}\vee S_{1, l})$, this corresponds to the removal of the edges between $v_0$ and $v_0a_i$, between $v_0$ and $v_0v_1$, between $v_t$ and $v_tc_s$ and between $v_t$ and $v_{t-1}v_t$. 
  Since these are all the edges adjacent to $v_0$ or $v_t$, the Hasse diagram consists of the following:
  \begin{itemize}
    \item the Hasse diagram of the leaf of $v_0w$, that is $\H(P_1)$ (see Figure \ref{fig:Hasse diagram1})
    \item $n$ copies of the Hasse diagram of $P_2$, obtained by connecting $v_0$, $a_i$ and $b_i$ in this order, with $v_0$ removed, that is $\H(P_2)-v_0$ (see Figure \ref{fig:Hasse diagram2})
    \item $\H(P_t)$ with both $v_0$ and $v_t$ removed (see Figure \ref{fig:Hasse diagram3})
    \item $l$ copies of the Hasse diagram of $P_2$, obtained by connecting $v_t$, $c_s$ and $d_s$ in this order, with $v_t$ removed, that is $\H(P_2)-v_t$ (see Figure \ref{fig:Hasse diagram4})
    \item the Hasse diagram of the leaf $v_tw^\prime$, that is $\H(P_1)$ (see Figure \ref{fig:Hasse diagram5})
  \end{itemize}

  \begin{figure}[htbp]
\begin{minipage}{.30\textwidth}
  \centering
  \includegraphics[scale=0.07]{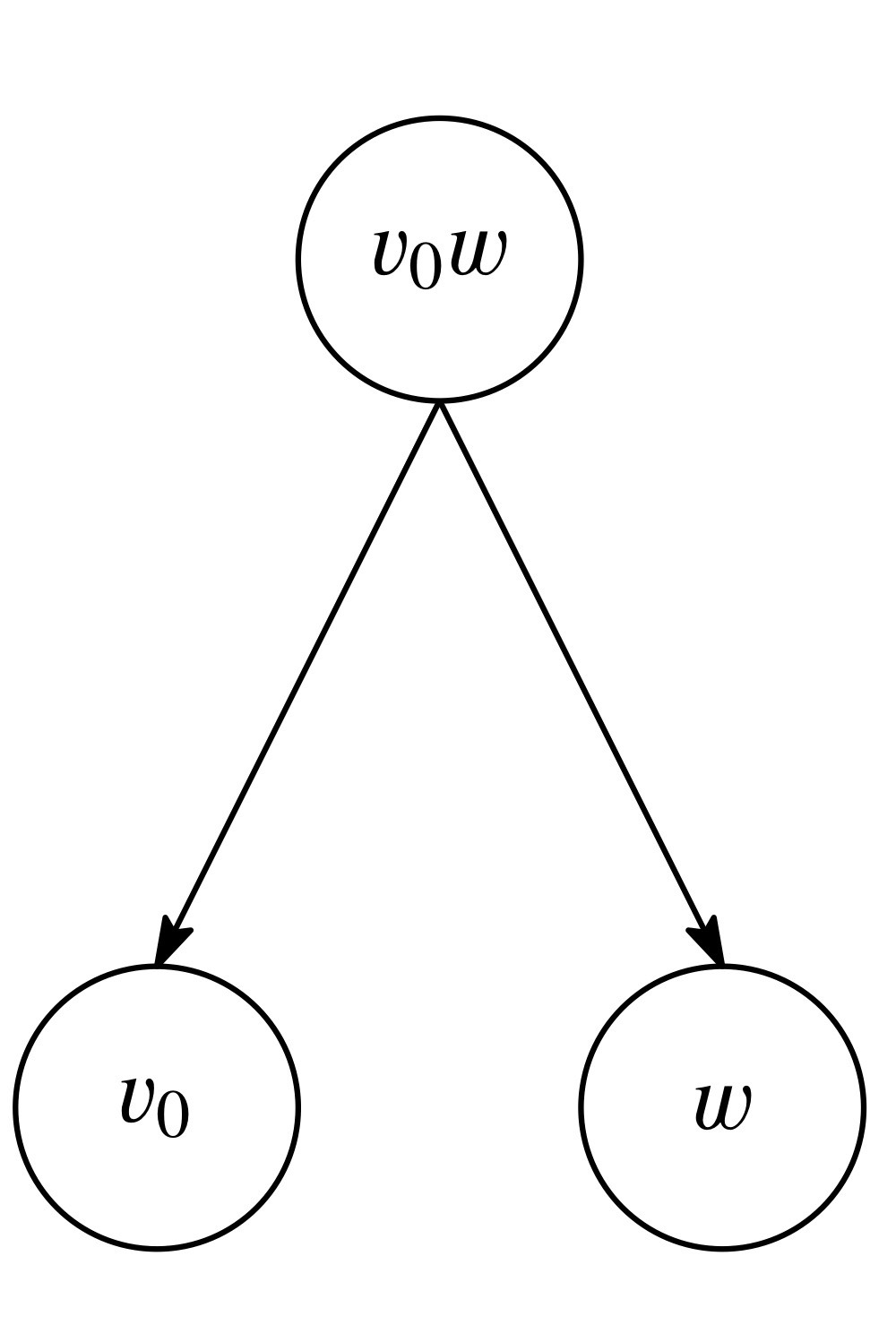}
  \caption{}
  \label{fig:Hasse diagram1}
\end{minipage}
\begin{minipage}{.39\textwidth}
  \centering
  \includegraphics[scale=0.10]{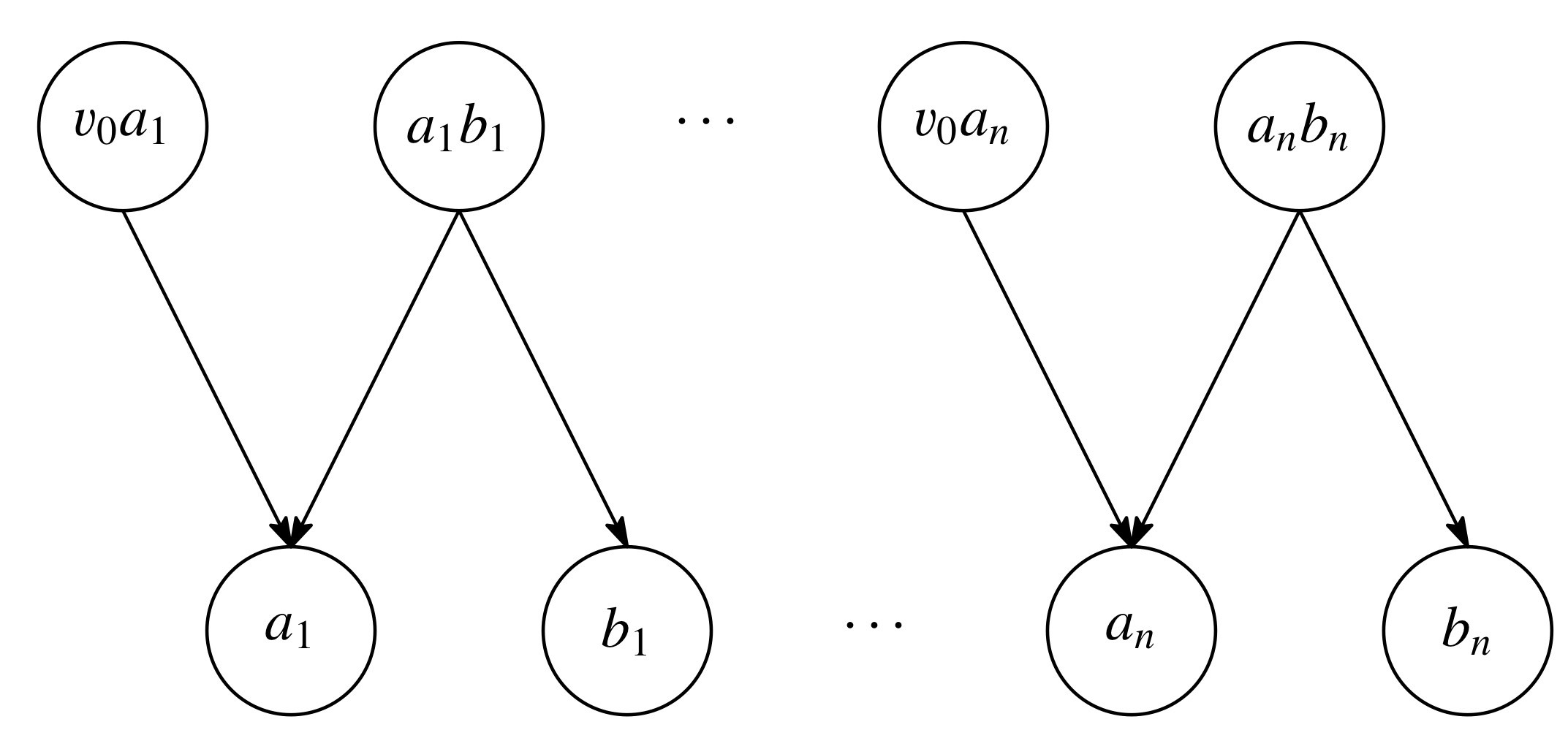}
  \caption{}
  \label{fig:Hasse diagram2}
\end{minipage}
\begin{minipage}{.30\textwidth}
  \centering
  \includegraphics[scale=0.075]{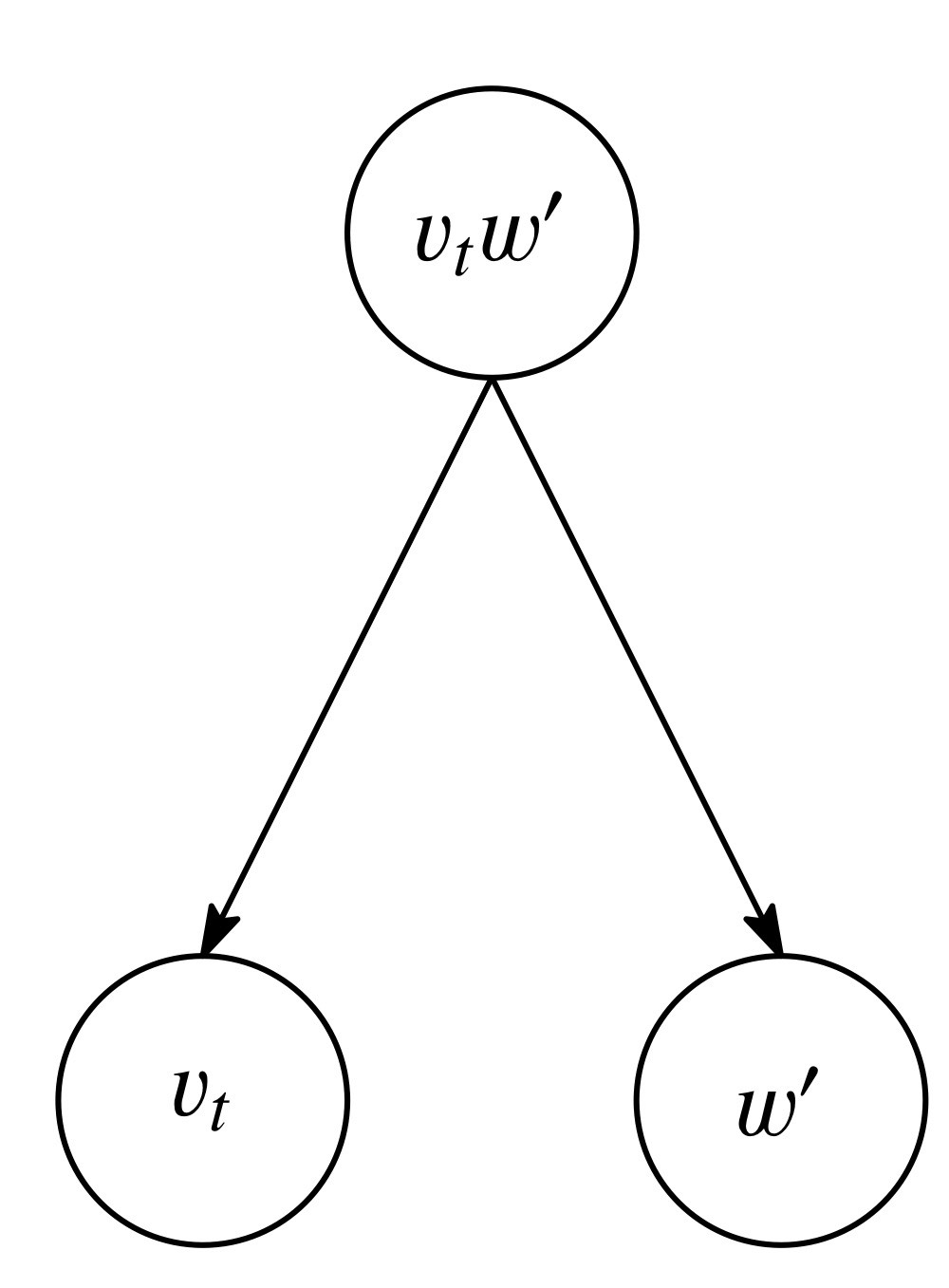}
  \caption{}
  \label{fig:Hasse diagram5}
\end{minipage}
\end{figure}

\begin{figure}[htbp]
\begin{minipage}{.45\textwidth}
  \centering
  \includegraphics[scale=0.105]{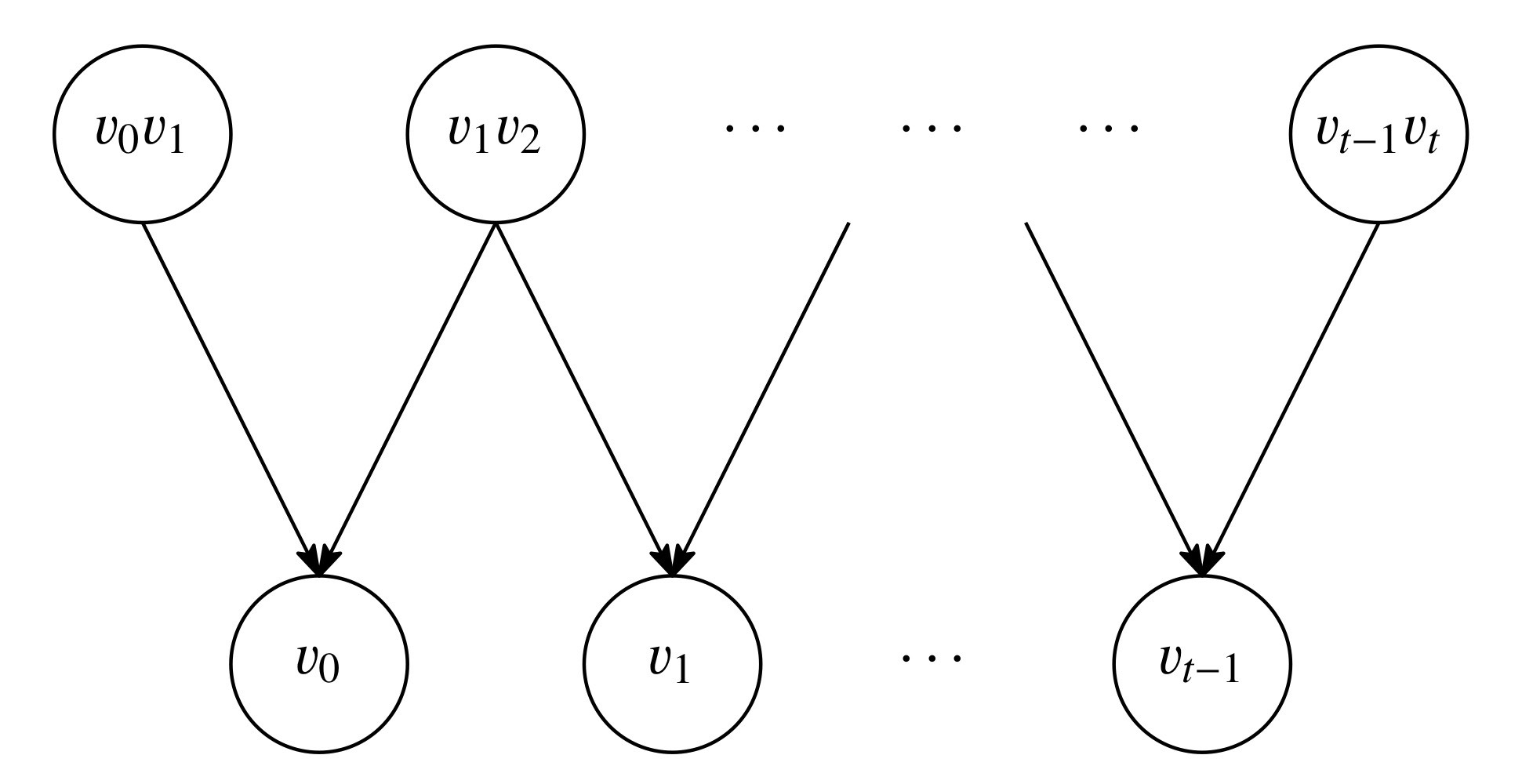}
  \caption{}
  \label{fig:Hasse diagram3}
\end{minipage}
\begin{minipage}{.45\textwidth}
  \centering
  \includegraphics[scale=0.105]{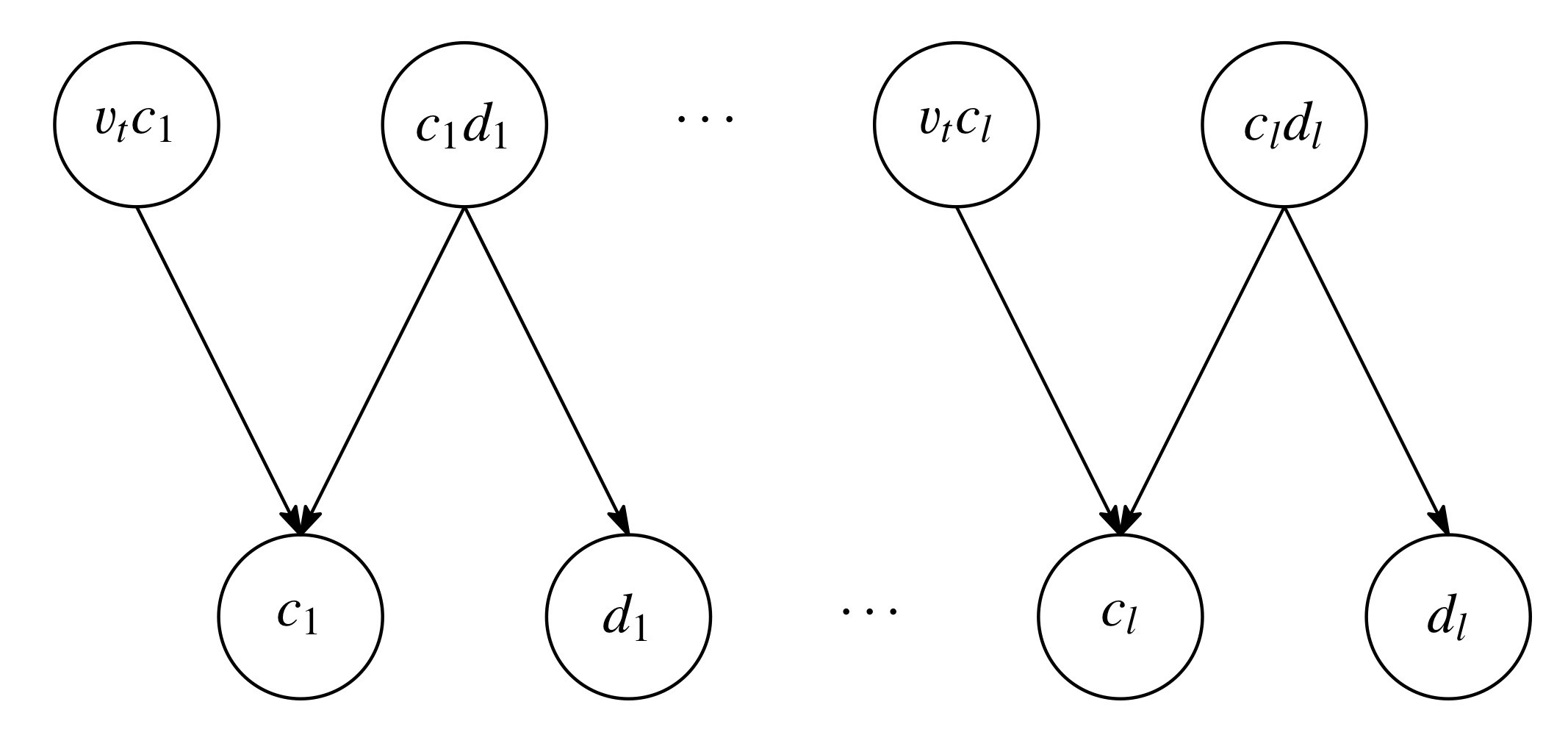}
  \caption{}
  \label{fig:Hasse diagram4}
\end{minipage}
\end{figure}

  Therefore,
  \begin{equation}
  \begin{split}
    \M(P_t\vee S_{1, n}\vee S_{1, l})&\searrow\searrow f(\H(P_1)\sqcup(\H(P_2)-v_0)^{\sqcup n}\sqcup\H(P_t-\{v_0, v_t\})\sqcup(\H(P_2)-v_t)^{\sqcup l}\sqcup\H(P_1))\\
    &\simeq f(\H(P_1))^{*2}*f(\H(P_2)-v_0)^{*n}*f(\H(P_t)-\{v_0, v_t\})*f(\H(P_2)-v_t)^{*l}
    \end{split}
  \end{equation}
  Since both $f(\H(P_2)-v_0)$ and $f(\H(P_2)-v_t)$ are homotopy equivalent to $S^0$ and $\H(P_t)-\{v_0, v_t\}\cong\H(P_{t-1})$,
  \begin{eqnarray}
    &&f(\H(P_1))^{*2}*f(\H(P_2)-v_0)^{*n}*f(\H(P_t)-\{v_0, v_t\})*f(\H(P_2)-v_t)^{*l}\\
    &\simeq& (S^0)^{*(n+l+2)}*f(\H(P_{t-1}))\simeq\Sigma^{n+l+2}\M(P_{t-1})\label{eq:4a}\\
    &\simeq&
    \begin{cases}
      * &(t=3u)\\
      S^{n+l+2u+1} &(t=3u+1)\\
      S^{n+l+2u+2} &(t=3u+2).
    \end{cases}
    \label{eq:4b}
  \end{eqnarray}
  The transformation from equation \eqref{eq:4a} to equation \eqref{eq:4b} is due to Kozlov \cite{Koz99}, as stated in Proposition \ref{prop:Kozlov} below. 
\end{proof}

\begin{prop}[{\cite[Proposition 4.6]{Koz99}}]\label{prop:Kozlov}
  Let $P_{t-1}$ be a path on $t$ vertices.
  Then,
  \begin{equation}
    \M(P_{t-1})\simeq
    \begin{cases}
      * &(t=3u)\\
      S^{2u-1} &(t=3u+1)\\
      S^{2u} &(t=3u+2)
    \end{cases}
  \end{equation}
\end{prop}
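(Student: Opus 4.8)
The plan is to deduce the statement from the suspension formula of Lemma~\ref{lem:suspension} together with three elementary base computations, one for each residue class of $t$ modulo~$3$. Concatenating a copy of $P_{3u}$ onto the endpoint $v_j$ of $P_j$ gives $P_j\vee_{v_j}P_{3u}=P_{j+3u}$, so Lemma~\ref{lem:suspension}, with $K=P_j$ and suspension parameter $u\ge 1$, yields
\begin{equation*}
  \M(P_{j+3u})=\M\bigl(P_j\vee_{v_j}P_{3u}\bigr)\simeq\Sigma^{2u}\M(P_j).
\end{equation*}
Consequently it suffices to identify $\M(P_0)$, $\M(P_1)$ and $\M(P_2)$, and then choose $j\in\{0,1,2\}$ according to the residue of $t$.

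For the base cases I would argue straight from Definition~\ref{dfn:Morse cplx}. Since $P_0$ has no edge it admits no primitive gradient vector field, so $\M(P_0)$ is the empty complex $S^{-1}$. The path $P_1$ has exactly the two primitive gradient vector fields $(v_0)v_1$ and $(v_1)v_0$, and these share the edge $v_0v_1$, hence span no $1$-simplex; so $\M(P_1)$ consists of two points, i.e.\ $S^0$. For $P_2$, Lemma~\ref{lem:dominated vertex} applied to the leaf $\{v_0,v_0v_1\}$ shows that $(v_1)v_2$ is dominated by $(v_0)v_1$, and applied to the leaf $\{v_2,v_1v_2\}$ shows that $(v_1)v_0$ is dominated by $(v_2)v_1$; carrying out these two elementary strong collapses leaves only the edge on the compatible pair $\{(v_0)v_1,(v_2)v_1\}$, so $\M(P_2)$ strongly collapses to a point and in particular $\M(P_2)\simeq *$.

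Now I would read off the three cases of the proposition. If $t=3u+1$ then $P_{t-1}=P_{3u}=P_0\vee_{v_0}P_{3u}$, so for $u\ge 1$ we get $\M(P_{3u})\simeq\Sigma^{2u}\M(P_0)=\Sigma^{2u}S^{-1}=S^{2u-1}$, the case $u=0$ being the base computation $\M(P_0)=S^{-1}$. If $t=3u+2$ then $P_{t-1}=P_{3u+1}=P_1\vee_{v_1}P_{3u}$, so $\M(P_{3u+1})\simeq\Sigma^{2u}\M(P_1)=\Sigma^{2u}S^0=S^{2u}$, with $u=0$ again the base case. If $t=3u$ then $P_{t-1}=P_{3u-1}=P_2\vee_{v_2}P_{3(u-1)}$, so $\M(P_{3u-1})\simeq\Sigma^{2(u-1)}\M(P_2)\simeq *$ for $u\ge 2$, while $u=1$ is the base case $\M(P_2)\simeq *$.

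The only substantive ingredient here is Lemma~\ref{lem:suspension}, which we are entitled to assume; everything else is index bookkeeping. The one point that calls for care is the residue $t\equiv 1\pmod 3$: there the base object is the empty complex $S^{-1}$, and one uses the convention $\Sigma S^{-1}=S^0$ to make the exponent come out as $2u-1$ (consistently even at $u=0$). A reader wishing to avoid the empty complex may instead take $t\in\{3,4,5\}$ as the base cases — verifying $\M(P_2)\simeq *$, $\M(P_3)\simeq S^1$ and $\M(P_4)\simeq S^2$ by direct computation on these finite complexes (together with the trivial cases $t=1,2$) — at the cost that the verification of $\M(P_4)\simeq S^2$ becomes the most laborious step.
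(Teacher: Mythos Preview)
The paper does not prove this proposition; it is quoted from Kozlov \cite{Koz99} and invoked as a black box in the proof of Theorem~\ref{thm:4b}. Your argument thus takes a genuinely different route: rather than appealing to the external reference, you derive the statement from the paper's own Lemma~\ref{lem:suspension} together with three small base computations via Definition~\ref{dfn:Morse cplx} and Lemma~\ref{lem:dominated vertex}. The reasoning is correct and has the pleasant feature of keeping everything internal to the paper's framework.

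Two caveats are worth recording. First, you should confirm that the proof of Lemma~\ref{lem:suspension} in \cite{DS23} does not itself rely on Proposition~\ref{prop:Kozlov}, which would make your argument circular; given the generality of that lemma this is unlikely, but it should be checked. Second, invoking Lemma~\ref{lem:suspension} with $K=P_0$ feeds in the degenerate case $\M(P_0)=\varnothing=S^{-1}$, and one must be sure the lemma as stated and proved in \cite{DS23} accommodates this. You already anticipate the issue and propose the clean fix of taking $\M(P_3)\simeq S^1$ as the base case instead; this is easily verified directly, since after the two strong collapses supplied by Lemma~\ref{lem:dominated vertex} the four surviving vertices $(v_0)v_1$, $(v_1)v_0$, $(v_2)v_3$, $(v_3)v_2$ form a $4$-cycle. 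With either route through that point, your proof stands.
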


\begin{thm}\label{thm:4c}
For any $t, n, l$, we have
\begin{equation}
  \M(P_t\vee S_{1, n}\vee S_{0, l})\simeq
  \begin{cases}
    S^{n+l+2u} &(t=3u)\\
    (S^{n+l+2u+1})^{\vee l} &(t=3u+1)\\
    (S^{n+l+2u+2})^{\vee(l-1)} &(t=3u+2)
  \end{cases}
\end{equation}
\end{thm}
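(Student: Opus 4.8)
Following the method of Theorems \ref{thm:3b} and \ref{thm:4b}, the plan is to first use Lemma \ref{lem:dominated vertex} to strong‑collapse $\M(P_t\vee S_{1,n}\vee S_{0,l})$ onto the (acyclic) matching complex of a graph obtained from $\H(P_t\vee S_{1,n}\vee S_{0,l})$ by deleting a few edges, then peel off the easy components using the fact that $f$ sends disjoint unions to joins, and finally compute the one remaining component. I assume $t\ge1$; the case $t=0$ is just $S_{1,n+l}$ and the formula reduces to Theorem \ref{thm:3b}. Write $v_0,\dots,v_t$ for the vertices of $P_t$, let $\{v_0w,w\}$ be the leaf of length $1$ of $S_{1,n}$, let $\{a_ib_i,b_i\}$ ($i=1,\dots,n$) and $\{c_sd_s,d_s\}$ ($s=1,\dots,l$) be the leaves of length $2$ of $S_{1,n}$ and $S_{0,l}$. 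By Lemma \ref{lem:dominated vertex}, $(w)v_0$ dominates $(v_0)a_i$ and $(v_0)v_1$, and $(d_s)c_s$ dominates $(c_s)v_t$. Removing all these dominated vertices is a sequence of elementary strong collapses which, on the Hasse diagram, deletes the edges between $v_0$ and $v_0a_i$, between $v_0$ and $v_0v_1$, and between $c_s$ and $v_tc_s$.

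The resulting graph then breaks into the components $\H(P_1)^{\sqcup(l+1)}\sqcup(\H(P_2)-v_0)^{\sqcup n}\sqcup\mathcal B$, where the $l+1$ copies of $\H(P_1)$ are the leaf $v_0w$ and the $l$ now‑detached ends $c_s-c_sd_s-d_s$, the $n$ copies of $\H(P_2)-v_0$ are the detached ends $v_0a_i-a_i-a_ib_i-b_i$, and $\mathcal B$ is the tree consisting of the path $v_0v_1-v_1-v_1v_2-\cdots-v_{t-1}v_t-v_t$ on $2t$ vertices together with the $l$ pendant edges $v_t-v_tc_s$ — a ``broom'' with a handle of $2t$ vertices and $l$ bristles. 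Since $f(\H(P_1))\simeq S^0$ and $f(\H(P_2)-v_0)\simeq S^0$ (as in the proofs of Theorems \ref{thm:3a} and \ref{thm:4b}),
\[
  \M(P_t\vee S_{1,n}\vee S_{0,l})\ \simeq\ (S^0)^{*(n+l+1)}*f(\mathcal B)\ \simeq\ \Sigma^{n+l+1}\,f(\mathcal B).
\]

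It remains to compute $f(\mathcal B)$, which is the matching complex of the broom ($\mathcal B$ being a forest). Write $\mathrm P_N$ for the path on $N$ vertices. Decomposing the matching complex along the $l$ bristle edges at $v_t$: the matchings using no bristle form $f(\mathrm P_{2t})$ (the matching complex of the handle), while those using one bristle form $f(\mathrm P_{2t-1})*D_l$ with $D_l$ a discrete set of $l$ points, glued to $f(\mathrm P_{2t})$ along $f(\mathrm P_{2t-1})$. Because the inclusion $f(\mathrm P_{2t-1})\hookrightarrow f(\mathrm P_{2t-1})*D_l$ is null‑homotopic, this homotopy pushout splits as a wedge; combining this with the standard delete/star identity $f(\mathrm P_{N+1})\simeq f(\mathrm P_N)/f(\mathrm P_{N-1})$ and with $A*D_l\simeq(\Sigma A)^{\vee(l-1)}$ gives $f(\mathcal B)\simeq f(\mathrm P_{2t+1})\vee\bigl(\Sigma f(\mathrm P_{2t-1})\bigr)^{\vee(l-1)}$. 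Since $\H(P_k)$ is itself a path on $2k+1$ vertices, $f(\mathrm P_{2k+1})=f(\H(P_k))=\M(P_k)$, so
\[
  \M(P_t\vee S_{1,n}\vee S_{0,l})\ \simeq\ \Sigma^{n+l+1}\M(P_t)\ \vee\ \bigl(\Sigma^{n+l+2}\M(P_{t-1})\bigr)^{\vee(l-1)} .
\]
Substituting Proposition \ref{prop:Kozlov} for $\M(P_t)$ and $\M(P_{t-1})$ and separating $t=3u,\ 3u+1,\ 3u+2$ then yields the three formulas of the statement (when $t=3u$ the second summand is contractible because $\M(P_{t-1})\simeq *$; when $t=3u+2$ the first summand is contractible because $\M(P_t)\simeq *$).

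The main obstacle is the computation of $f(\mathcal B)$: one has to set up the bristle decomposition of the broom's matching complex carefully, verify that the relevant inclusion is null‑homotopic so that the homotopy pushout really does split as a wedge, and justify the recursion $f(\mathrm P_{N+1})\simeq f(\mathrm P_N)/f(\mathrm P_{N-1})$ from the delete/star decomposition of a path's matching complex. One must also track the degenerate conventions ($f(\mathrm P_1)\simeq S^{-1}$, $\Sigma S^{-1}=S^0$, an empty wedge is a point) so that the boundary cases $l=1$ and $t\equiv2\pmod3$ — where the Morse complex can be contractible — come out correctly.
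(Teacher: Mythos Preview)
Your plan is correct and leads to the stated result, but it follows a genuinely different route from the paper's proof.

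The paper uses only the leaf $\{v_0w,w\}$ on the $S_{1,n}$ side and then \emph{iterates} Lemma~\ref{lem:dominated vertex} along the path: after removing $(v_0)v_1$ one finds that $(v_1)v_0$ dominates $(v_2)v_1$, then $(v_2)v_3$ dominates $(v_3)v_4$, and so on in a period-$3$ pattern.  This chain of strong collapses chops the Hasse diagram into copies of $\H(P_1)$ and $\H(P_2)-v_0$ together with a single residual piece that is literally $\H(S_{1,l})$, $\H(S_{0,l+1})$, or $\H(S_{0,l})$ according to $t\bmod 3$; the answer then follows directly from Theorems~\ref{thm:3a} and~\ref{thm:3b}.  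Everything stays inside the paper's toolkit of strong collapses and previously computed Morse complexes.

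You instead collapse at \emph{both} ends (using the leaves $\{d_s,c_sd_s\}$ as well), stop there, and are left with the ``broom'' $\mathcal B$, which you analyse by a homotopy-pushout argument.  The payoff is the uniform closed formula
\[
\M(P_t\vee S_{1,n}\vee S_{0,l})\ \simeq\ \Sigma^{n+l+1}\M(P_t)\ \vee\ \bigl(\Sigma^{n+l+2}\M(P_{t-1})\bigr)^{\vee(l-1)},
\]
which makes the $t\bmod 3$ trichotomy transparent as a direct consequence of Proposition~\ref{prop:Kozlov}.  The cost is that the broom step needs genuine homotopy theory (null-homotopic leg $\Rightarrow$ the homotopy pushout splits as cofiber $\vee$ target; the identity $D_l*A\simeq(\Sigma A)^{\vee(l-1)}$; the recursion $f(\mathrm P_{N+1})\simeq f(\mathrm P_N)/f(\mathrm P_{N-1})$), none of which is set up in the paper.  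These facts are standard for CW pairs and your sketch identifies them correctly, so the argument goes through; just be aware that the paper's intended proof avoids them entirely by pushing the strong collapses further down the path and reducing to the already-computed $\M(S_{0,l})$, $\M(S_{0,l+1})$, $\M(S_{1,l})$.
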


\begin{figure}[htbp]
  \centering
  \includegraphics[scale=0.13]{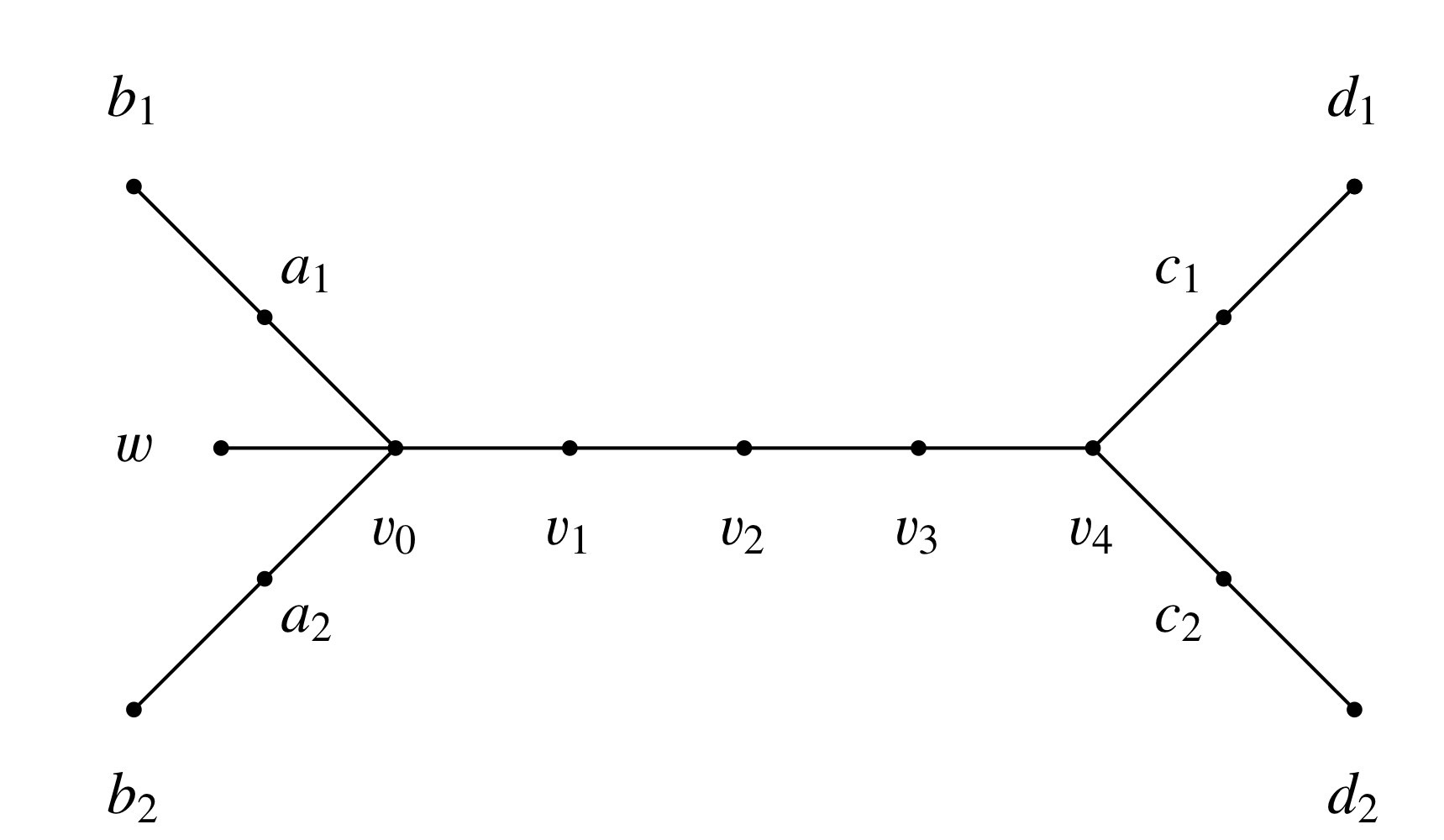}
  \caption{$P_4\vee S_{1, 2}\vee S_{0, 2}$. By Theorem \ref{thm:4c}, we see that $\M(P_4\vee S_{1, 2}\vee S_{0, 2})\simeq S^7\vee S^7$.}
  \label{fig:thm4c}
\end{figure}

\begin{proof}
  We define the vertices of $P_t\vee S_{1, n}\vee S_{0, l}$ as in the proof of Theorem \ref{thm:4b}.
  By Lemma \ref{lem:dominated vertex}, $(w)v_0$ dominates $(v_0)a_i$ and $(v_0)v_1$.
  After removing $(v_0)v_1$, every facet of $(v_2)v_1$ contains $(v_1)v_0$, and so $(v_1)v_0$ dominates $(v_2)v_1$.
  Hence, we can remove $(v_2)v_1$.
  After removing $(v_2)v_1$, every facet of $(v_3)v_4$ contains $(v_2)v_3$, and so $(v_2)v_3$ dominates $(v_3)v_4$.
  Hence, we can remove $(v_3)v_4$.
  Continuing in this manner, we see that $(v_{3k+1})v_{3k}$ dominates $(v_{3k+2})v_{3k+1}$ for all $0\leq k\leq u$, and $(v_{3k-1})v_{3k}$ dominates $(v_{3k})v_{3k+1}$ for all $0<k\leq u$.
  In the corresponding Hasse diagram $\H(P_t\vee S_{1, n}\vee S_{0, l})$, this corresponds to the removal of the edges between $v_0$ and $v_0a_i$, between $v_0$ and $v_0v_1$, between $v_{3k}$ and $v_{3k}v_{3k+1}$ and between $v_{3k+2}$ and $v_{3k+1}v_{3k+2}$.
  We have three cases:
  \begin{enumerate}
    \item Let $t=3u$. 
    In this case, the Hasse diagram consists of the following:
    \begin{itemize}
      \item the Hasse diagram of the leaf $v_0w$, that is $\H(P_1)$ (see Figure \ref{fig:Hasse diagram1})
      \item $n$ copies of the Hasse diagram of $P_2$, obtained by connecting $v_0$, $a_i$ and $b_i$ in this order, with $v_0$ removed, that is $\H(P_2)-v_0$ (see Figure \ref{fig:Hasse diagram2})
      \item $(2u-1)$ copies of the Hasse diagram $\H(P_1)$ (see Figure \ref{fig:Hasse diagram6})
      \item $\H(S_{1, l})$ (see Figure \ref{fig:Hasse diagram7})
    \end{itemize}
    Therefore,
    \begin{equation}
      \begin{split}
        \M(P_{3u}\vee S_{1, n}\vee S_{0, l})&\searrow\searrow f(\H(P_1)\sqcup(\H(P_2)-v_0)^{\sqcup n}\sqcup(\H(P_1))^{\sqcup(2u-1)}\sqcup\H(S_{1, l}))\\
        &\simeq f(\H(P_1))^{*2u}*f(\H(P_2)-v_0)^{*n}*f(\H(S_{1, l}))\\
        &\simeq f(\H(P_1))^{*2u}*f(\H(P_2)-v_0)^{*n}*\M(S_{1, l})
      \end{split}
    \end{equation}
    By Theorem \ref{thm:3b},
    \begin{equation}
    \begin{split}
      f(\H(P_1))^{*2u}*f(\H(P_2)-v_0)^{*n}*\M(S_{1, l})&\simeq (S^0)^{*2u}*(S^0)^{*n}*S^l\\
      &\simeq\Sigma^{n+2u}S^l\\
      &\simeq S^{n+l+2u}.
    \end{split}
    \end{equation}

    \begin{figure}[htbp]
    \begin{minipage}{.45\textwidth}
      \centering
      \includegraphics[scale=0.10]{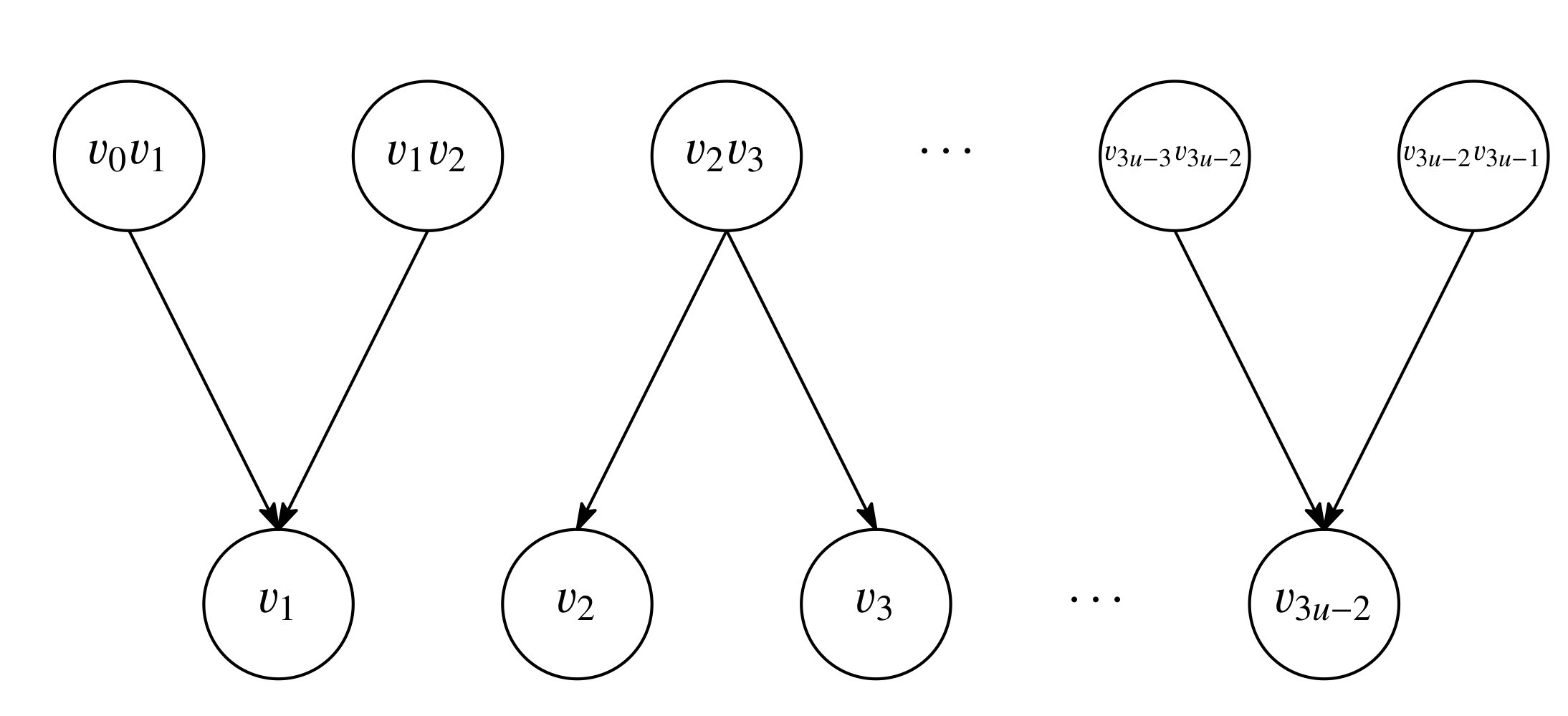}
      \caption{}
      \label{fig:Hasse diagram6}
    \end{minipage}
    \begin{minipage}{.45\textwidth}
      \centering
      \includegraphics[scale=0.10]{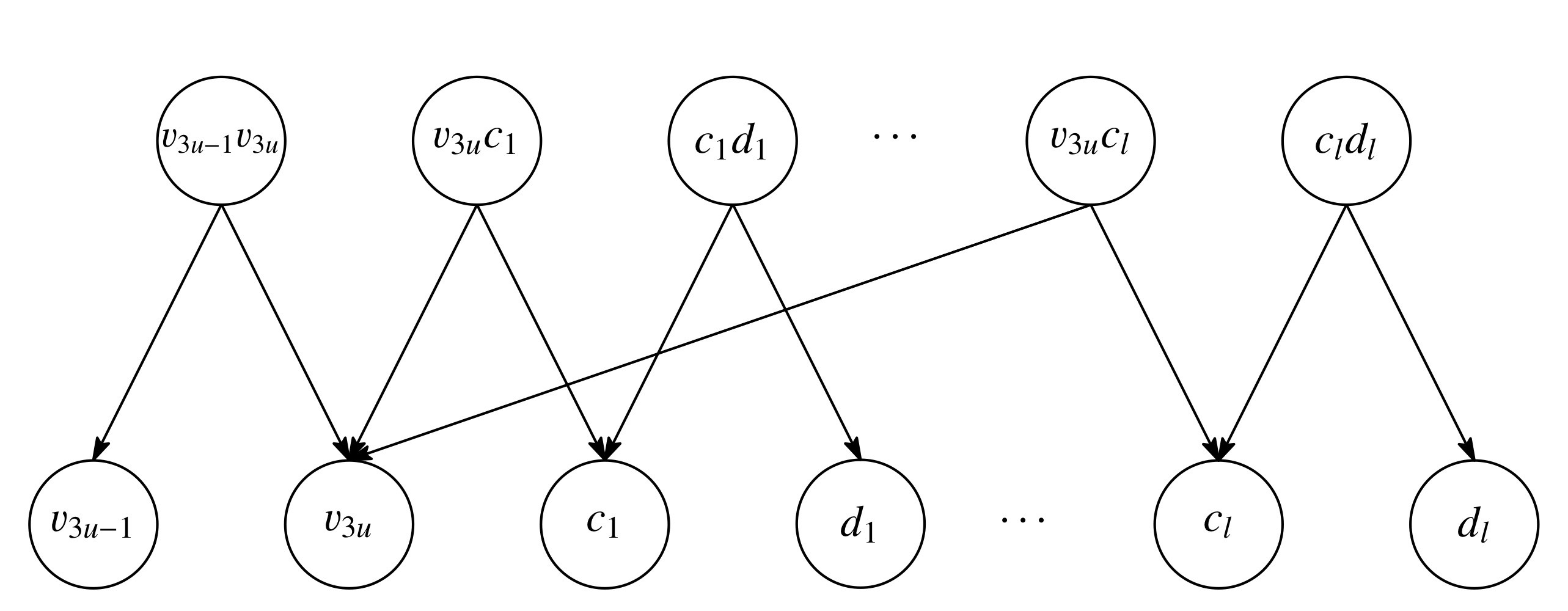}
      \caption{}
      \label{fig:Hasse diagram7}
    \end{minipage}
    \end{figure}

    \item Let $t=3u+1$.
    We now consider the Hasse diagram in the case where the edge between $v_{3u}$ and $v_{3u}v_{3u+1}$ is not removed.
    Then, the Hasse diagram consists of the following: 
    \begin{itemize}
      \item the Hasse diagram of the leaf $v_0w$, that is $\H(P_1)$ (see Figure \ref{fig:Hasse diagram1})
      \item $n$ copies of the Hasse diagram of $P_2$, obtained by connecting $v_0$, $a_i$ and $b_i$ in this order, with $v_0$ removed, that is $\H(P_2)-v_0$ (see Figure \ref{fig:Hasse diagram2})
      \item $(2u-1)$ copies of the Hasse diagram $\H(P_1)$ (see Figure \ref{fig:Hasse diagram6})
      \item $\H(S_{0, l+1})$ (see Figure \ref{fig:Hasse diagram8})
    \end{itemize}

    \begin{figure}[htbp]
      \centering
      \includegraphics[scale=0.10]{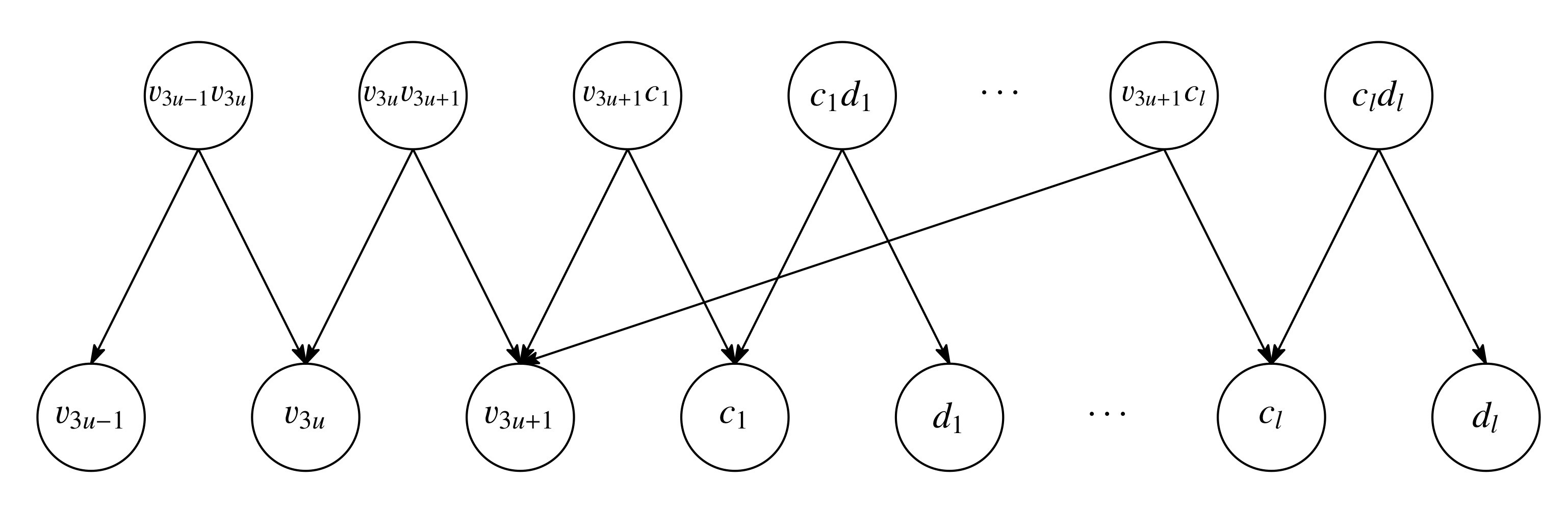}
      \caption{}
      \label{fig:Hasse diagram8}
    \end{figure}

    Therefore,
    \begin{equation}
      \begin{split}
        \M(P_{3u+1}\vee S_{1, n}\vee S_{0, l})&\searrow\searrow f(\H(P_1)\sqcup(\H(P_2)-v_0)^{\sqcup n}\sqcup(\H(P_1))^{\sqcup(2u-1)}\sqcup\H(S_{0, l+1}))\\
        &\simeq f(\H(P_1))^{*2u}*f(\H(P_2)-v_0)^{*n}*f(\H(S_{0, l+1}))\\
        &\simeq f(\H(P_1))^{*2u}*f(\H(P_2)-v_0)^{*n}*\M(S_{0, l+1})
      \end{split}
    \end{equation}
    By Theorem \ref{thm:3a},
    \begin{equation}
    \begin{split}
      f(\H(P_1))^{*2u}*f(\H(P_2)-v_0)^{*n}*\M(S_{0, l+1})&\simeq (S^0)^{*2u}*(S^0)^{*n}*(S^{l+1})^{\vee l}\\
      &\simeq\Sigma^{n+2u}(S^{l+1})^{\vee l}\\
      &\simeq (S^{n+l+2u+1})^{\vee l}.
    \end{split}
    \end{equation}

    \item Let $t=3u+2$.
    In this case, the Hasse diagram consists of the following: 
    \begin{itemize}
      \item the Hasse diagram of the leaf $v_0w$, that is $\H(P_1)$ (see Figure \ref{fig:Hasse diagram1})
      \item $n$ copies of the Hasse diagram of $P_2$, obtained by connecting $v_0$, $a_i$ and $b_i$ in this order, with $v_0$ removed, that is $\H(P_2)-v_0$ (see Figure \ref{fig:Hasse diagram2})
      \item $(2u+1)$ copies of the Hasse diagram $\H(P_1)$ (see Figure\ref{fig:Hasse diagram9})
      \item $\H(S_{0, l})$ (see Figure \ref{fig:Hasse diagram10})
    \end{itemize}
    \begin{figure}[htbp]
    \begin{minipage}{.45\textwidth}
      \centering
      \includegraphics[scale=0.095]{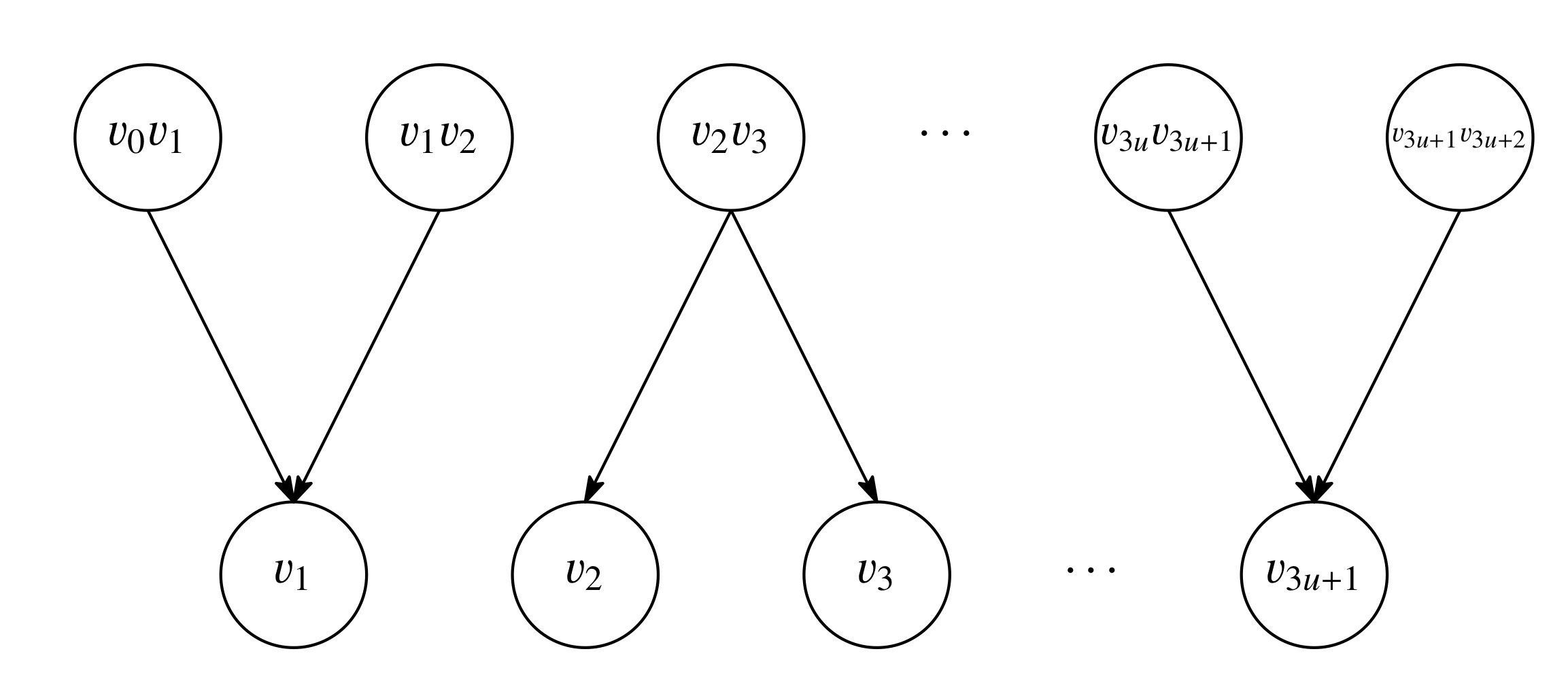}
      \caption{}
      \label{fig:Hasse diagram9}
    \end{minipage}
    \begin{minipage}{.45\textwidth}
      \centering
      \includegraphics[scale=0.095]{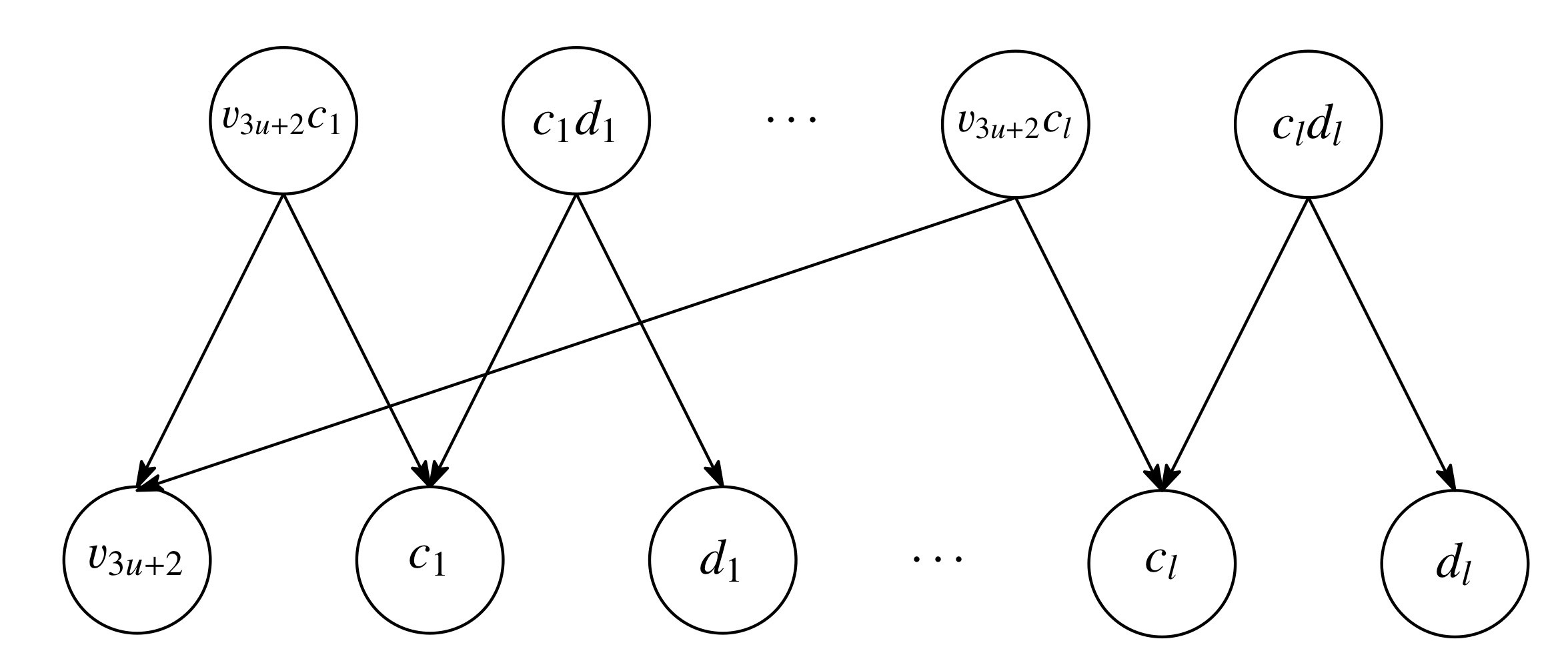}
      \caption{}
      \label{fig:Hasse diagram10}
    \end{minipage}
    \end{figure}

    Therefore,
    \begin{equation}
      \begin{split}
        \M(P_{3u+2}\vee S_{1, n}\vee S_{0, l})&\searrow\searrow f(\H(P_1)\sqcup(\H(P_2)-v_0)^{\sqcup n}\sqcup(\H(P_1))^{\sqcup(2u+1)}\sqcup\H(S_{0, l}))\\
        &\simeq f(\H(P_1))^{*(2u+2)}*f(\H(P_2)-v_0)^{*n}*f(\H(S_{0, l}))\\
        &\simeq f(\H(P_1))^{*(2u+2)}*f(\H(P_2)-v_0)^{*n}*\M(S_{0, l})
      \end{split}
    \end{equation}
    By Theorem \ref{thm:3a},
    \begin{equation}
    \begin{split}
      f(\H(P_1))^{*(2u+2)}*f(\H(P_2)-v_0)^{*n}*\M(S_{0, l})&\simeq (S^0)^{*(2u+2)}*(S^0)^{*n}*(S^l)^{\vee(l-1)}\\
      &\simeq\Sigma^{n+2u+2}(S^l)^{\vee(l-1)}\\
      &\simeq (S^{n+l+2u+2})^{\vee(l-1)}.
      \end{split}
    \end{equation}

  \end{enumerate}
\end{proof}

\end{document}